\title{Finding large counterexamples by selectively exploring the\\
Pachner graph}
\date{\today}
\author{
 Benjamin A. Burton\\
 The University of Queensland\\
 \email{bab}{maths.uq.edu.au}
 \and
 Alexander He\\
 The University of Queensland\\
 \email{a.he}{uqconnect.edu.au}
}
\renewclass{\coNP}{co\text{-}NP}
\setlist[itemize]{leftmargin=*, noitemsep}
\setlist[enumerate]{leftmargin=*, noitemsep}
\setlist[description]{leftmargin=*, labelwidth=*, noitemsep}
\theoremstyle{plain}
\newtheorem{theorem}{Theorem}
\newtheorem{proposition}[theorem]{Proposition}
\newtheorem{construction}[theorem]{Construction}
\newtheorem{observation}[theorem]{Observation}
\newtheorem{algorithm}[theorem]{Algorithm}
\newtheorem{conjecture}[theorem]{Conjecture}
\newtheorem*{theorem*}{Theorem}
\theoremstyle{definition}
\newtheorem{definition}[theorem]{Definition}
\newtheorem{definitions}[theorem]{Definitions}
\newcommand{\Regina}{\texttt{\textup{Regina}}}
\newcommand{\False}{\texttt{\textup{false}}}
\newcommand{\Unknown}{\texttt{\textup{inconclusive}}}
\DeclareMathOperator{\image}{\mathrm{im}}
\newcommand\simtimes{\mathbin{%
    \stackrel{\sim}{\smash{\times}\rule{0pt}{0.6ex}}%
}}
\newcommand{\ConnSum}{\mathbin{\#}}
\renewenvironment{proof}[1][\proofname] {\par\pushQED{\qed}\normalfont\topsep6\p@\@plus6\p@\relax\trivlist\item[\hskip\labelsep\itshape\bfseries#1\@addpunct{.}]\ignorespaces}{\popQED\endtrivlist\@endpefalse}
\newcommand{\email}[2]{\href{mailto:#1@#2}{\textsf{#1\hspace{1pt}$@$\hspace{1pt}#2}}}
\tikzset{
	resultsbox/.style={
		every matrix/.append style={
			draw=black,
			fill=cyan!20,
			ultra thick,
			rounded corners,
			inner sep=3pt
		}
	}
}
\begin{document}

\maketitle

\begin{abstract}
We often rely on censuses of triangulations to guide our intuition in $3$-manifold topology.
However, this can lead to misplaced faith in conjectures if
the smallest counterexamples are too large to appear in our census.
Since the number of triangulations increases super-exponentially with size,
there is no way to expand a census beyond relatively small triangulations---the
current census only goes up to $10$ tetrahedra.
Here, we show that it is feasible to search for large and hard-to-find counterexamples by
using heuristics to \emph{selectively} (rather than exhaustively) enumerate triangulations.
We use this idea to find counterexamples to three conjectures
which ask, for certain $3$-manifolds, whether one-vertex triangulations always
have a ``distinctive'' edge that would allow us to recognise the $3$-manifold.
\end{abstract}
\paragraph{Keywords}Computational topology, 3-manifolds, Triangulations, Counterexamples,
Heuristics, Implementation, Pachner moves, Bistellar flips
\paragraph{Acknowledgements}We thank the referees for their helpful comments.
The second author was supported by an Australian Government Research Training Program Scholarship.

\section{Introduction}\label{sec:intro}

There is no shortage of conjectures in computational geometry and topology that are true in small cases,
but which turn out to be false in general due to the existence of a relatively large counterexample.
Perhaps the most notable example of this is the \emph{Hirsch conjecture}, which posited that
in a $d$-dimensional polytope with $n$ facets, any two vertices can be connected by a path of at most $n-d$ edges.
It is known that the Hirsch conjecture is true when the dimension is small (specifically, when $d\leqslant3$~\cite{Klee1966}),
as well as when the number of facets is small (specifically, when $n\leqslant d+6$~\cite{BremnerSchewe2011}).
However, these small cases are not indicative of the general behaviour:
Santos showed that there is a $43$-dimensional counterexample with $86$ facets~\cite{Santos2012}.

For an example from computational $3$-manifold topology, consider the Seifert fibre spaces
(see Section~\ref{subsec:sfs} for a definition of these $3$-manifolds).
There are $302$ Seifert fibre spaces that can be triangulated with no more than $7$ tetrahedra;
for all of these, at least one \textbf{minimal triangulation}
(triangulation with the smallest possible number of tetrahedra)
uses a standard prism-and-layering construction.
This pattern does not persist: there is a Seifert fibre space whose unique ($8$-tetrahedron) minimal triangulation is
given by the isomorphism signature
\texttt{iLLLPQcbcgffghhhtsmhgosof}
(the software package \Regina~\cite{Regina} can convert this string back into a triangulation),
and this triangulation is instead constructed from a gadget---called the \textbf{brick} $B_5$---that was first identified
in Martelli and Petronio's census of minimal triangulations up to $9$ tetrahedra~\cite{MartelliPetronio2001}.\footnote{
Although the first author~\cite{Burton2011ISSAC} and Matveev~\cite{MatveevAtlas} have since
extended the census, no other gadgets like $B_5$ have yet been found.}

Our main goal in this paper is to present a technique for finding large counterexamples
in a similar setting to the one just mentioned.
The difference is that instead of only studying properties of minimal triangulations,
we consider arbitrary one-vertex triangulations.
The practical motivation for this is that one-vertex triangulations are easy to obtain,
whereas it is generally difficult to know for sure whether a particular triangulation is minimal.

In our setting, the most obvious source of counterexamples would be a census
of all possible triangulations up to a given number of tetrahedra.
However, this only captures small counterexamples because the number of triangulations
increases super-exponentially as we increase the number of tetrahedra;
to date, the census of all possible closed $3$-manifold triangulations only goes up to $10$ tetrahedra,
and this already includes over $2$~billion triangulations, constituting over $63$~GB of data~\cite{Burton2011ISSAC}.

How can we find a counterexample that is too large to appear in the census?
We showcase a method of \emph{selectively} (rather than exhaustively) enumerating triangulations,
which allows us to find large counterexamples to three related conjectures
posed by Saul Schleimer at the 2022 Dagstuhl workshop on
\emph{Computation and Reconfiguration in Low-Dimensional Topological Spaces}.

\subsection{The conjectures}\label{subsec:conj}

Each of the three conjectures mentioned above concerns properties of edges of one-vertex triangulations.
Since our triangulations only have one vertex, each edge realises an embedded closed curve,
so we can ask whether such an edge is embedded in an interesting or useful way.

For instance, recall that a \textbf{lens space} is a $3$-manifold given by gluing together two solid tori.
One way to recognise a lens space is to find a \textbf{core curve}:
an embedded closed curve $\gamma$ such that the complement of an open regular neighbourhood of $\gamma$ is a solid torus.
We can ask whether one-vertex triangulations of lens spaces
always contain a \textbf{core edge} (i.e., an edge that realises a core curve):

\begin{conjecture}\label{conj:core}
Every one-vertex triangulation of a lens space has a core edge.
\end{conjecture}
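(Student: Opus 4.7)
The plan is to attempt to disprove Conjecture~\ref{conj:core} rather than to prove it, since the paper's abstract advertises counterexamples obtained by selectively exploring the Pachner graph. It is reasonable to guess that no counterexample appears in the existing census up to $10$ tetrahedra (otherwise the conjecture would not have been posed by Schleimer), so any counterexample must be large and cannot be found by brute-force enumeration.

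First, I would fix a target lens space $L(p,q)$ and produce a small initial one-vertex triangulation of it, for example via the standard layered solid torus construction, whose edges do include cores (this is only a seed). I would then walk the Pachner graph using $2$-$3$ and $3$-$2$ bistellar moves, together with whatever edge or vertex moves are needed to preserve the one-vertex condition. At each visited triangulation $T$, the critical subroutine tests, for each edge $e$ of $T$, whether $e$ is a core edge: this reduces to drilling out a regular neighbourhood of $e$ and asking whether the resulting bounded triangulation is a solid torus, a question that can be decided by \Regina. If some visited $T$ has no core edge at all, we have a counterexample.

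The search must be guided, since the Pachner graph of one-vertex triangulations of a fixed lens space is infinite and branches rapidly. A natural heuristic is a best-first exploration driven by a priority function that rewards triangulations which look ``far'' from having a core edge---for instance, triangulations in which the complement of every edge is demonstrably not a solid torus, or in which the edges have high geometric complexity (long length in the $1$-skeleton, high intersection with some reference surface, etc.). High-scoring triangulations are expanded preferentially, subject to a resource budget on the total number of tetrahedra.

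The main obstacle, I expect, will be the recognition side rather than the search side: certifying that \emph{no} edge of a candidate counterexample is a core edge requires correctly ruling out the solid torus for every edge complement, and this test, repeated over every edge of every triangulation on the search frontier, will dominate the running time. A secondary obstacle is that the heuristic itself is unprincipled---pushing toward ``core-free-looking'' triangulations need not converge on a genuine counterexample---so substantial experimentation with the priority function, and possibly with the choice of $L(p,q)$, will likely be required before the search succeeds.
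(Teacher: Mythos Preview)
Your overall plan matches the paper's approach closely: start from a small one-vertex triangulation of a lens space, explore the Pachner graph via $2$-$3$ and $3$-$2$ moves, test edges for being core edges by drilling and running solid torus recognition, and guide the search with a priority function. The paper indeed finds counterexamples this way (for $S^3$ and several other lens spaces), and confirms your guess that nothing appears in the census up to $10$ tetrahedra.

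Where your proposal diverges is in the heuristic, and this is precisely where the paper's contribution lies. Your suggested priority functions are either circular (``complement of every edge is demonstrably not a solid torus'' is the \emph{goal}, not a gradient toward it) or too vague to implement (``high geometric complexity''). The paper's heuristic is concrete and mechanistic: for each core edge $e$, compute the \emph{degree defect} $\lvert d(e)-3\rvert$ and the \emph{multiplicity defect} $d(e)-n(e)$, and prioritise triangulations lexicographically by (number of core edges, max multiplicity defect, max degree defect, size). The point is that a core edge can be \emph{destroyed} by a $3$-$2$ move exactly when both defects vanish, so the heuristic drives the search toward configurations where the unwanted edges become removable. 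The paper reports that omitting the multiplicity defect (their first attempt) caused the search to stall indefinitely in regions where degree defect was near zero but multiplicity defect was stuck at $2$; getting the ordering right was essential.

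On your anticipated bottleneck: the paper sidesteps the cost of re-testing every edge at every node by observing that $2$-$3$ and $3$-$2$ moves alter the $1$-skeleton by exactly one edge, so core-edge status can be carried forward and only the single new edge from a $2$-$3$ move needs fresh solid-torus recognition. This, together with the targeted heuristic, brings the counterexample search down to minutes and a few thousand triangulations, rather than the hours-long exhaustive census check you implicitly worry about.
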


Since solid torus recognition can be solved efficiently in practice~\cite{BurtonOzlen12,Burton2013Regina},
proving Conjecture~\ref{conj:core} would have provided a new and relatively efficient method for recognising lens spaces.
Lens space recognition can also be used to determine whether a 3-manifold is \textbf{elliptic}
(i.e., admits spherical geometry).
Indeed, Lackenby and Schleimer~\cite{LackenbySchleimer2022} recently showed
that recognising elliptic manifolds is in $\NP$;
their proof relies on the following result, which can be viewed as a variant of Conjecture~\ref{conj:core}:

\begin{theorem*}[Lackenby and Schleimer~{\cite[Theorem~9.4]{LackenbySchleimer2022}}]
Let $\mathcal{M}$ be a lens space that is neither $\mathbb{R}P^3$ nor a prism manifold\footnote{
Lackenby and Schleimer use a slightly different definition of prism manifold than the one we give in Section~\ref{subsec:sfs}.},
and let $\mathcal{T}$ be any triangulation of $\mathcal{M}$.
Then the \nth{86} iterated barycentric subdivision of $\mathcal{T}$
contains a sequence of edges that forms a core curve.
\end{theorem*}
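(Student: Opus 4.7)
My plan is to start with any core curve $\gamma$ in $\mathcal{M}$---which exists by definition of a lens space---and then show that $\gamma$ can be isotoped onto the $1$-skeleton of the $86$th iterated barycentric subdivision of $\mathcal{T}$, using PL general position arguments at the level of each simplex.

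By PL general position I would first arrange $\gamma$ to be a simple closed curve transverse to the $2$-skeleton of $\mathcal{T}$, so that $\gamma$ meets each simplex in a bounded number of embedded arcs of a controlled combinatorial type. The basic idea is then to exploit the fact that barycentric subdivision drastically shrinks stars: after enough subdivisions, the simplicial neighbourhood of every simplex crossed by $\gamma$ is small enough that purely local PL moves can simplify the intersection pattern. I would aim to proceed in stages, each costing a bounded number of subdivisions: first push $\gamma$ away from the barycenters of the $3$-simplices, then off the barycenters of the $2$-simplices, then off interior points of edges, and finally straighten $\gamma$ onto the $1$-skeleton itself. The constant $86$ should arise by summing the subdivisions required at each stage.

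The hard part is obtaining a bound that is uniform in $\mathcal{T}$ and in the initial choice of $\gamma$. The generic PL approximation theorem guarantees that \emph{some} finite number of subdivisions suffices to push any PL loop onto the $1$-skeleton, but a universal constant requires bounding the combinatorial complexity of $\gamma \cap \mathcal{T}^{(k)}$ after $k$ subdivisions in terms of $k$ alone. A second and subtler obstacle is ensuring that the resulting edge-loop is still a \emph{core} curve, not merely a loop in the correct homotopy class: each simplifying move must preserve the ambient isotopy class of $\gamma$, since only isotopy---not homotopy---determines whether the complement of a regular neighbourhood is a solid torus.

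The exclusion of $\mathbb{R}P^3$ and of prism manifolds is presumably essential in this last step. These manifolds admit involutive symmetries that can exchange distinct isotopy classes of simple closed curves lying in the same homotopy class, so in those exceptional cases the local simplification moves risk replacing a core curve with a non-core curve. Away from the exceptions, I would expect the proof to rely on a rigidity statement showing that for the relevant homotopy classes of loops in a lens space, homotopy class in fact determines isotopy class, so that the simplified edge-loop inherits the core property from the original $\gamma$.
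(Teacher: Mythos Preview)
This theorem is not proved in the present paper at all: it is quoted verbatim from \cite{LackenbySchleimer2022} as background for Conjecture~\ref{conj:core}, and the paper offers no argument for it. So there is no ``paper's own proof'' against which to compare your proposal.

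That said, your outline has a genuine gap that would prevent it from ever reaching a universal constant like $86$. You begin with an \emph{arbitrary} core curve $\gamma$ and an arbitrary triangulation $\mathcal{T}$, put $\gamma$ in general position with respect to the $2$-skeleton, and then hope that a fixed number of barycentric subdivisions lets you isotope $\gamma$ onto the $1$-skeleton. But the number of points in $\gamma\cap\mathcal{T}^{(2)}$ can be made as large as you like by choosing $\mathcal{T}$ (or $\gamma$) badly, and barycentric subdivision does not reduce that number---it multiplies the number of $2$-cells, and hence the number of transverse intersections, rather than shrinking it. The ``stars get small'' intuition is metric, not combinatorial: after $k$ subdivisions each simplex of $\mathcal{T}$ is replaced by $24^k$ simplices, and a generic arc through the original simplex now crosses on the order of $2^k$ of the new $2$-faces. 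No amount of local PL straightening inside a single simplex will produce a bound independent of $\mathcal{T}$, because the complexity you need to control is global.

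The Lackenby--Schleimer argument proceeds in the opposite direction: rather than starting from a pre-chosen core curve and simplifying it, they use the triangulation itself (via normal and almost normal surface theory, thin position, and the Heegaard structure of a lens space) to \emph{build} a core curve whose combinatorial complexity is controlled a priori by the simplicial structure. The constant $86$ arises from bounding the number of subdivisions needed to realise each step of that construction simplicially. The exclusion of $\mathbb{R}P^3$ and prism manifolds is tied to technicalities in that Heegaard/Seifert-fibred analysis, not to an ``involutions swap isotopy classes'' phenomenon as you suggest.
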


The other two conjectures have similar motivations:
if every one-vertex triangulation of a particular type of 3-manifold contains
an edge with a distinctive property, then we can use such an edge to recognise the $3$-manifold.

For the second conjecture, consider a knot $K$ in the 3-sphere.
The \textbf{tunnel number} of $K$ is the smallest number of arcs that need to be added to $K$ so that
the complement becomes a handlebody; by construction, this is a knot invariant.
The three knots shown in Figure~\ref{fig:tunnelOneKnots} are all examples of knots with tunnel number equal to one
(though this might not be obvious at first glance).

\begin{figure}[htbp]
\centering
	\begin{subfigure}[t]{0.3\textwidth}
	\centering
	\includegraphics[scale=0.8]{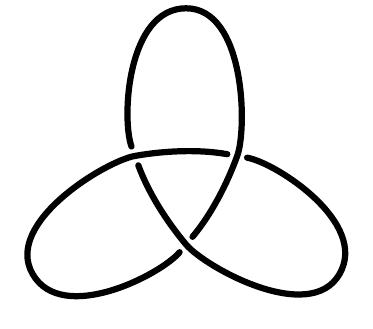}
	\caption{The trefoil knot.}
	\end{subfigure}
	\hfill
	\begin{subfigure}[t]{0.3\textwidth}
	\centering
	\includegraphics[scale=0.8]{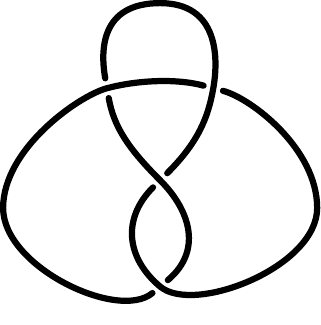}
	\caption{The figure-eight knot.}
	\end{subfigure}
	\hfill
	\begin{subfigure}[t]{0.3\textwidth}
	\centering
	\includegraphics[scale=0.8]{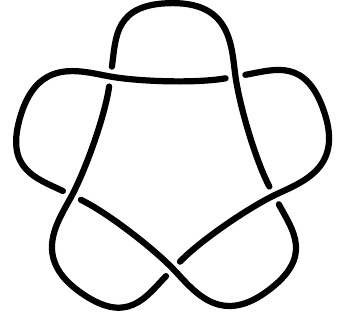}
	\caption{The $(5,2)$ torus knot.}
	\end{subfigure}
\caption{Three knots with tunnel number equal to one.}
\label{fig:tunnelOneKnots}
\end{figure}

Let $\mathcal{T}$ be a one-vertex ideal triangulation of the complement of $K$,
so that each edge $e$ in $\mathcal{T}$ forms an arc $\alpha$ that meets $K$ at its endpoints (and nowhere else).
We call $e$ a \textbf{tunnel edge} if the complement of $K\cup\alpha$ is a genus-2 handlebody;
assuming that $K$ is not the unknot, the existence of a tunnel edge implies that $K$ has tunnel number equal to one.

\begin{conjecture}\label{conj:tunnel}
Let $K$ be a knot with tunnel number equal to one.
Then every one-vertex ideal triangulation of the complement of $K$ has a tunnel edge.
\end{conjecture}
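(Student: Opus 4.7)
The plan is to attack Conjecture~\ref{conj:tunnel} from two angles: first attempt a direct inductive argument on the Pachner graph, and then, if that fails, use the selective search method advertised in the introduction. For the inductive attempt, I would start from a known one-vertex ideal triangulation $\mathcal{T}_{0}$ of the complement of $K$ that comes equipped with a distinguished tunnel edge $e_{0}$; for the standard tunnel-number-one knots (trefoil, figure-eight, $(5,2)$-torus) such a $\mathcal{T}_{0}$ can be read off directly from a genus-$2$ Heegaard splitting of $S^{3}$ containing $K$. I would then try to show that the property ``has a tunnel edge'' is invariant under each of the Pachner moves $2$-$3$, $3$-$2$ and $4$-$4$, by analysing how a compressing disc system for the genus-$2$ handlebody complement behaves when the triangulation in its neighbourhood is modified.

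The hard part of this direct attack is exactly where a Pachner move deletes or contracts the current tunnel edge $e_{0}$ itself: one must exhibit some \emph{other} edge of the new triangulation that realises a (possibly completely different) tunnelling arc. Such an edge-swap argument is delicate, because a compressing disc for $K\cup e_{0}$ need not sit transversely with respect to the new $2$-skeleton. If this step fails, the concrete Pachner moves where it fails are precisely the ones that should be prioritised by the selective search, and this guides the choice of heuristic in the second phase.

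The second phase is computational. Fix a tunnel-number-one knot $K$ with a small ideal triangulation $\mathcal{T}_{0}$, and walk through the Pachner graph from $\mathcal{T}_{0}$; at each triangulation $\mathcal{T}$ visited, test every edge $e$ by building the drilled complement $\overline{S^{3}\setminus N(K\cup\alpha)}$ and asking whether it is a genus-$2$ handlebody, which \Regina{} can attack via normal-surface methods. A triangulation for which the test returns \texttt{false} at every edge is a counterexample to Conjecture~\ref{conj:tunnel}. To make this feasible, I would not enumerate exhaustively but instead steer the search using a heuristic that disfavours features correlated with the presence of obvious tunnel edges (for example, low-degree edges matching the combinatorial shape of $e_{0}$, or edges sitting inside small layered solid-torus sub-gadgets), and re-seed whenever the walk stagnates.

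The main obstacle I expect is twofold. First, per-triangulation cost: handlebody recognition in genus $2$ requires a compressing-disc search and is much more expensive than, say, solid-torus recognition, so the heuristic has to be sharp enough that a counterexample is certified after testing only modestly many triangulations. Second, the Pachner graph of a knot complement is infinite and the neighbourhood of $\mathcal{T}_{0}$ is densely populated by triangulations that still trivially contain $e_{0}$ (or a combinatorially equivalent edge), so the walk will spend most of its time in the ``easy'' region unless the heuristic is actively biased away from it. Designing a heuristic that is biased enough to escape this easy region, yet not so aggressive that it skips over the counterexamples themselves, is where I expect most of the difficulty to lie.
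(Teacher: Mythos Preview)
First, a framing issue: Conjecture~\ref{conj:tunnel} is \emph{false}, and the paper resolves it by exhibiting explicit counterexamples (for the trefoil, figure-eight, $(5,2)$-torus knot, and in fact all prime knots up to seven crossings except $5_2$). So your first phase---an inductive argument that ``has a tunnel edge'' is preserved under Pachner moves---is not merely delicate but doomed; there is no point in pursuing it, and the paper does not.

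Your second phase is in the right spirit, but the heuristic you describe is the wrong one. You propose to bias the walk away from triangulations containing ``obvious'' tunnel edges (low degree, layered-solid-torus shape, etc.). The paper's heuristic is much more concrete and, crucially, is oriented around \emph{actively removing} the bad edges rather than passively avoiding regions that look like they have them. Specifically: an edge $e$ can be destroyed by a $3$-$2$ move exactly when it has degree $3$ and meets three distinct tetrahedra. The paper therefore defines, for each bad (tunnel) edge, a \emph{degree defect} $\lvert d(e)-3\rvert$ and a \emph{multiplicity defect} $d(e)-n(e)$, and orders triangulations lexicographically by (number of bad edges, max multiplicity defect, max degree defect, size). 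The search greedily pulls the smallest-complexity triangulation off a priority queue, expands by eligible $2$-$3$ and $3$-$2$ moves, and aggressively attempts chains of $3$-$2$ moves on bad edges whenever they become available. This is what lets the search escape the ``easy region'' you are worried about: it is not trying to stumble on a triangulation with no tunnel edges, it is trying to manoeuvre the existing tunnel edges into a combinatorial configuration where they can be killed one at a time.

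Two further points where your plan diverges from what actually works. First, you include $4$-$4$ moves; the paper uses only $2$-$3$ and $3$-$2$, and in fact relies on a bespoke implementation that tracks how these moves renumber edges so that the expensive handlebody test is run on at most one new edge per move. Second, the paper explicitly observes (and you do not anticipate) that prioritising degree defect alone traps the search in regions where bad edges have degree close to $3$ but multiplicity defect $2$; putting multiplicity defect \emph{ahead} of degree defect in the lexicographic order is essential, and the number of concurrent search processes also matters in a way that has to be tuned by hand.
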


For the third conjecture, we consider \textbf{Seifert fibre spaces}.
These are 3-manifolds that are fibred by circles in a particular way
(see Section~\ref{subsec:sfs} for a complete definition);
we call each circle fibre a \textbf{Seifert fibre}, and we call an edge a \textbf{Seifert fibre edge} if it is isotopic to a Seifert fibre.
Seifert fibre spaces are important because they play a central role in
canonical torus decompositions for closed orientable irreducible $3$-manifolds:
\begin{itemize}
\item The JSJ decomposition
(due to Jaco and Shalen~\cite{JacoShalen1978,JacoShalen1979}, and Johannson~\cite{Johannson1979})
cuts such $3$-manifolds into pieces that are either atoroidal or Seifert fibred.
\item The geometric decomposition from Thurston's geometrisation conjecture
(famously proved by
Perelman~\cite{Perelman2002,Perelman2003March,Perelman2003July})
cuts such $3$-manifolds into pieces that are either hyperbolic or graph manifolds;
the graph manifolds are precisely the $3$-manifolds that can be further decomposed along tori into Seifert fibre spaces.
\end{itemize}
The \textbf{small} Seifert fibre spaces do not contain any embedded two-sided incompressible surfaces,
which makes them relatively difficult to work with.

\begin{conjecture}\label{conj:fibre}
Every one-vertex triangulation of a small Seifert fibre space has a Seifert fibre edge.
\end{conjecture}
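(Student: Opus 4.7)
The plan is not really to prove the statement but to search for a counterexample, since the whole thrust of the paper is a selective Pachner-graph exploration tailored to disproving conjectures of exactly this shape. I would begin by fixing a target small Seifert fibre space $\mathcal{M}$, together with a small one-vertex triangulation $\mathcal{T}_0$ of $\mathcal{M}$ (the existing census up to $10$ tetrahedra gives many candidates, and indeed tells us that no counterexample of size at most $10$ exists). From $\mathcal{T}_0$ I would traverse the Pachner graph using one-vertex-preserving bistellar moves (most notably 2-3 and 3-2 moves, supplemented by 0-2 and 2-0 moves), tracking visited triangulations by isomorphism signatures in \Regina\ to avoid revisits.

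For each triangulation $\mathcal{T}$ reached, the core subroutine is: does some edge of $\mathcal{T}$ realise a Seifert fibre up to isotopy? If the answer is \emph{yes} for at least one edge, then $\mathcal{T}$ is not a counterexample and I queue its Pachner-neighbours; if the answer is \emph{no} for all edges, $\mathcal{T}$ is a counterexample and the search terminates. To make this tractable I would drive the traversal with a best-first heuristic that estimates ``how close $\mathcal{T}$ is to being a counterexample''---for example, by counting how few edges survive a cheap necessary condition for being isotopic to a fibre, and by rewarding triangulations exhibiting combinatorial features that have been productive earlier in the search (in analogy with the sporadic gadgets like the brick $B_5$ recalled in the introduction). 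A tetrahedron budget and a time budget would cap the exploration.

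The main obstacle is the fibre-edge test itself. Unlike core-edge recognition in a lens space (which reduces to solid-torus recognition on the edge complement), deciding whether an edge $e\subset\mathcal{T}$ is isotopic to a Seifert fibre requires showing, after drilling a regular neighbourhood of $e$, that the complement is Seifert-fibred compatibly with a fibration of $\mathcal{M}$ in which the drilled curve is a regular fibre. This is an expensive topological decision, and we cannot afford to run it in full on every edge of every large triangulation encountered. A practical workflow would therefore combine fast normal-surface obstructions (to prune most edges immediately) with a rigorous test that is invoked only when the cheap filters are inconclusive or when a putative counterexample needs to be certified. A secondary concern is that the whole method is one-sided: if the conjecture happens to be true, the selective exploration will at best exhaust its budget, so any genuine outcome of this approach disproves rather than confirms the statement.
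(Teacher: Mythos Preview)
Your plan is essentially the paper's own approach: selectively explore the Pachner graph of a fixed small Seifert fibre space, guided by a heuristic, and test edges of each visited triangulation for being isotopic to a Seifert fibre, terminating when a triangulation with no such edge is found.

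Two points where the paper sharpens what you leave vague are worth flagging. First, your heuristic (``count how few edges survive a cheap necessary condition'') is not what the paper uses; the key innovation is to measure, for each \emph{bad} edge, its \emph{multiplicity defect} and \emph{degree defect}, and to greedily drive these towards zero so that the bad edge can be destroyed by a $3$--$2$ move. The paper reports that a na\"{i}ve heuristic without the multiplicity defect gets stuck, so your unspecified ``closeness'' measure may well fail in practice. Second, your fibre-edge test is stated only for \emph{regular} fibres (``the drilled curve is a regular fibre''); you must also rule out that the edge is isotopic to an \emph{exceptional} fibre, otherwise a claimed counterexample could still have a fibre edge. The paper's test (Algorithm~\ref{algm:seifert}) handles both cases and, importantly, is deliberately one-sided: it only ever certifies that an edge is \emph{not} a fibre (via the absence of the required vertical vertex-normal annuli and tori in the drilled complement), which is exactly what is needed to confirm a counterexample. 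Finally, the paper's search uses only $2$--$3$ and $3$--$2$ moves; the $0$--$2$ move appears only afterwards, to promote a single counterexample to an infinite family.
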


\subsection{Using a targeted search to find counterexamples}\label{subsec:counterex}

As mentioned earlier, we have found counterexamples to all three conjectures listed in Section~\ref{subsec:conj}.
Our focus for the rest of this paper will mainly be on the methods that we used to find these counterexamples.
The key innovation is a heuristic for measuring how ``far away'' a triangulation is from having no core edges
(or more generally, no edges satisfying any particular topological property that is computable).

To see why such a heuristic is necessary, consider the $3$-sphere---the simplest possible lens space;
for the $3$-sphere, core edges are equivalent to edges that are \textbf{unknotted}.
In the census of all triangulations with up to $10$ tetrahedra,
there are \num{422533279} one-vertex triangulations of the 3-sphere.
We tested all of these and found that they all have at least one core edge.
This required just over $22$ hours of wall time running on $12$ threads in parallel,
but this does not include the time required to:
\begin{enumerate}[label={(\arabic*)}]
\item generate the census in the first place; and
\item identify all the $3$-spheres in this census (which had been done
previously~\cite{Burton2011ISSAC,Burton2011arXiv}).
\end{enumerate}
(We also performed a more exhaustive test---requiring about $53$ and a half hours
of wall time on $12$ parallel threads---which showed that all of the one-vertex $3$-spheres
with up to $10$ tetrahedra actually have at least \emph{two} core edges.)

The main takeaway is that this exhaustive search was both expensive and unsuccessful.
In contrast, by using our heuristic to enumerate triangulations in a targeted fashion,
we were able to find all of our counterexamples in just \emph{minutes} of wall time,
without ever needing to pre-process large quantities of data.
It is also worth noting that all of our counterexamples have at least $11$ tetrahedra,
putting them beyond the limits of the census.
See Section~\ref{sec:results} for detailed computational results;
at the end of Section~\ref{sec:results}, we also show how to turn each of our counterexamples into an infinite family.

We discuss the key tools and algorithms that allowed us to find these examples in
Sections~\ref{sec:tools} and~\ref{sec:heuristics}.
Specifically, Section~\ref{sec:heuristics} introduces our heuristic and our targeted search algorithm.
Section~\ref{sec:tools} discusses a number of auxiliary algorithms, all of which may be of independent interest;
in particular, we present (and implement) an improved algorithm for
handlebody recognition in Section~\ref{subsec:handlebody}.

We finish with Section~\ref{sec:discussion}, which raises some questions that remain unanswered.
In particular, in Section~\ref{subsec:future}, we mention some other potential applications
of the idea of using heuristics to selectively enumerate large triangulations.

\section{Preliminaries}\label{sec:prelims}

Throughout this paper, we will say that a compact $3$-manifold is \textbf{closed} if it has empty boundary, and \textbf{bounded} if it has non-empty boundary.

\subsection{Triangulations}\label{subsec:triang}

A \textbf{(generalised) triangulation} $\mathcal{T}$
is a finite collection of tetrahedra whose triangular faces may be affinely identified in pairs;
each equivalence class of such faces is a \textbf{face} of $\mathcal{T}$.
The \textbf{size} of $\mathcal{T}$, denoted $\lvert\mathcal{T}\rvert$,
is the number of tetrahedra that make up $\mathcal{T}$.
The \textbf{boundary} of $\mathcal{T}$ consists of all the faces that
are not identified with any other faces.
We allow faces of the same tetrahedron to be identified,
which means that our triangulations need not be simplicial complexes.

As a result of the face identifications, many edges and vertices of the tetrahedra may also become identified.
The resulting equivalence classes of edges are called \textbf{edges} of the triangulation;
similarly, the resulting equivalence classes of vertices are called \textbf{vertices} of the triangulation.
The \textbf{degree} of an edge $e$ of a triangulation is the number of tetrahedra that meet $e$, counted with multiplicity;
this is equivalent to the number of tetrahedron edges that are identified to form $e$.
The \textbf{1-skeleton} of a triangulation $\mathcal{T}$ is the subcomplex
consisting only of the vertices and edges of $\mathcal{T}$.

If a point lies on the boundary of a triangulation, then we say that it is \textbf{boundary};
otherwise, we say that it is \textbf{internal}.
If an edge or face consists \emph{entirely} of boundary points, then we say that it is \textbf{boundary};
otherwise, we say that it is \textbf{internal}.

A notable feature of generalised triangulations is that it is possible to get a \textbf{one-vertex triangulation}:
a triangulation in which all of the tetrahedron vertices are identified to form a single vertex.
In fact, \emph{every} compact $3$-manifold with at most one boundary component
admits a one-vertex triangulation;
one way to prove this is to use the two-tetrahedron ``triangular-pillow-with-tunnel''
that was first introduced by Weeks~\cite{SnapPy,Weeks2005}.

As a small but informative example, we build a one-vertex triangulation $\mathcal{T}$
of the solid torus using just a single tetrahedron $\Delta$.
Label the vertices of $\Delta$ by $0$, $1$, $2$ and $3$, as shown in Figure~\ref{fig:oneTetLST}.
Glue the front two faces so that vertices $0$, $1$ and $2$ of the left face are
respectively identified with vertices $1$, $3$ and $0$ of the right face.
Although $\mathcal{T}$ is not a simplicial complex, it is a perfectly sensible
triangulation in the sense we have just defined.
Observe that all four vertices of $\Delta$ get identified to become a single boundary vertex in $\mathcal{T}$,
and that $\mathcal{T}$ has three edges (all of which are boundary) given by identifying the following edges of $\Delta$:
\begin{itemize}
\item edges $20$, $01$ and $13$ all form a single edge, as indicated by the black arrows in Figure~\ref{fig:oneTetLST};
\item edges $12$ and $30$ together form a single edge, as indicated by the white arrows in Figure~\ref{fig:oneTetLST}; and
\item edge $23$ forms the final edge.
\end{itemize}
One way to see that $\mathcal{T}$ does indeed triangulate the solid torus is
to observe that the internal face of $\mathcal{T}$ forms a M\"{o}bius band;
we can then think of the whole triangulation as a ``thickening'' of this M\"{o}bius band, which is a solid torus.

\begin{figure}[htbp]
\centering
\begin{tikzpicture}[scale=1]
\node[inner sep=0pt] (LST) at (0,0)
	{\includegraphics[scale=0.75]{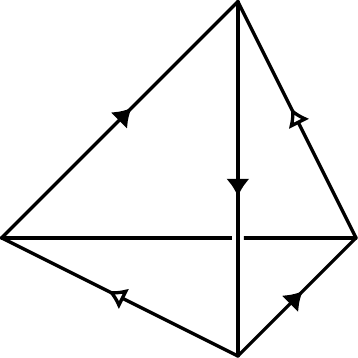}};

\node[above, inner sep=3pt] at ($(LST.north)+(0.75,0)$) {$0$};
\node[below, inner sep=3pt] at ($(LST.south)+(0.75,0)$) {$1$};
\node[left, inner sep=3pt] at ($(LST.west)+(0,-0.75)$) {$2$};
\node[right, inner sep=3pt] at ($(LST.east)+(0,-0.75)$) {$3$};
\end{tikzpicture}
\caption{Gluing two faces of a single tetrahedron with a ``twist'' gives a one-vertex triangulation of the solid torus.}
\label{fig:oneTetLST}
\end{figure}

In general, for a triangulation $\mathcal{T}$ to represent a $3$-manifold, every point $p$ in $\mathcal{T}$ must
be \textbf{non-singular} in the sense that it has a small neighbourhood bounded by either
a disc (if $p$ is boundary) or
a $2$-sphere (if $p$ is internal).
This condition may fail to hold for vertices and midpoints of edges.
For this paper, we insist that no edge is ever identified with itself in reverse;
this ensures that midpoints of edges are always non-singular.

We call $\mathcal{T}$ a \textbf{3-manifold triangulation} if all its vertices are non-singular,
because in this case the underlying topological space of $\mathcal{T}$ will genuinely be a $3$-manifold (possibly with boundary).
We call a $3$-manifold triangulation $\mathcal{T}$ \textbf{closed} if the underlying $3$-manifold $\mathcal{M}$
is closed, and we call $\mathcal{T}$ \textbf{bounded} if $\mathcal{M}$ is bounded.

In addition to $3$-manifold triangulations, we also work with triangulations containing vertices that are \textbf{ideal},
meaning that a small neighbourhood of the vertex is bounded by a closed surface other than the $2$-sphere.
For this paper, we will call a triangulation \textbf{ideal} if \emph{all} of its vertices are ideal.
Ideal triangulations often give an efficient way to represent the bounded $3$-manifold given by truncating the ideal vertices;
this idea originated with Thurston's two-tetrahedron ideal triangulation of
the figure-eight knot complement~\cite[Example~1.4.8]{Thurston1997}.

To concretely encode a triangulation,
we usually give each tetrahedron both a label and an ordering of its four vertices.
Two triangulations are \textbf{(combinatorially) isomorphic} if they are identical up to
relabelling tetrahedra and/or reordering tetrahedron vertices.
We can uniquely identify any isomorphism class of triangulations using
a short, efficiently-computable string called an \textbf{isomorphism signature}.
There are many ways to formulate isomorphism signatures;
we use the formulation described in \cite{Burton2011arXiv},
which is implemented in \Regina~\cite{Burton2013Regina,Regina}.

\subsection{The 2-3 and 3-2 moves}\label{subsec:elemMoves}

Given any one-vertex triangulation $\mathcal{T}$ with at least two tetrahedra,
we can produce a new one-vertex triangulation of the same $3$-manifold by
performing a \textbf{2-3 move} about a (triangular) face that meets two distinct tetrahedra;
this move replaces these two tetrahedra with three tetrahedra attached around a new edge $e$,
as illustrated in Figure~\ref{fig:pachnerMove}.
We call the inverse move a \textbf{3-2 move} about the edge $e$.
Observe that it is possible to perform a 3-2 move about an edge if and only if
this edge is an internal degree-$3$ edge that actually meets three distinct tetrahedra.

\begin{figure}[htbp]
\centering
	\begin{tikzpicture}
	\node[inner sep=0pt] (Before) at (0,0)
		{\includegraphics[scale=0.6]{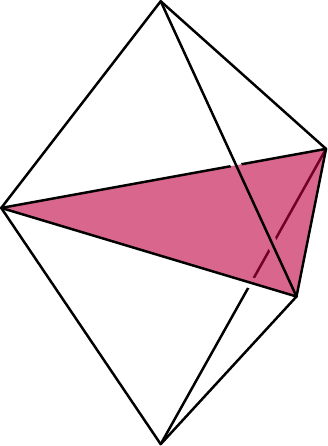}};
	\node[inner sep=0pt] (After) at (5.5,0)
		{\includegraphics[scale=0.6]{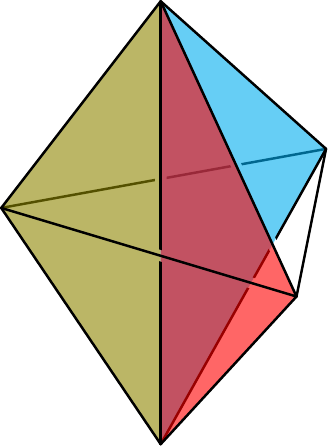}};

	\begin{scope}[very thick, line cap=round, -{Stealth}]
	\draw ($(Before.east)+(0,0.3)$)
		-- ($(After.west)+(0,0.3)$)
		node[midway, above, inner sep=3pt] {2-3};
	\draw ($(After.west)+(0,-0.3)$)
		-- ($(Before.east)+(0,-0.3)$)
		node[midway, below, inner sep=3pt] {3-2};
	\end{scope}
	\end{tikzpicture}
\caption{The 2-3 and 3-2 moves.}
\label{fig:pachnerMove}
\end{figure}

With this in mind, consider a closed $3$-manifold $\mathcal{M}$.
We can think of the one-vertex triangulations of $\mathcal{M}$ with at least two tetrahedra as
nodes of an infinite graph, with two nodes connected by an undirected arc if and only if
the corresponding triangulations are related by a 2-3 move;
this graph is called the \textbf{Pachner graph} of $\mathcal{M}$.
Matveev~\cite{Matveev2007} and Piergallini~\cite{Piergallini1988}
independently proved the following result (though they did not state this result in terms of Pachner graphs):

\begin{theorem}\label{thm:pachnerConnected}
For every closed 3-manifold $\mathcal{M}$, the Pachner graph of $\mathcal{M}$ is connected.
\end{theorem}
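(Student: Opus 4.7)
The plan is to pass to the dual picture, where one-vertex triangulations of $\mathcal{M}$ correspond to special spines of the punctured manifold $\mathcal{M}\setminus\mathrm{int}(B^3)$ and 2-3 Pachner moves correspond to Matveev-Piergallini T-moves, and then invoke the well-known connectedness theorem for special spines. Concretely, given a one-vertex triangulation $\mathcal{T}$ of $\mathcal{M}$ with at least two tetrahedra, the dual 2-skeleton of $\mathcal{T}$, viewed inside $\mathcal{M}$ minus an open ball centred at the unique vertex, is a special spine $P_{\mathcal{T}}$ of this punctured manifold; the \emph{true vertices} of $P_{\mathcal{T}}$ (in the sense of Matveev) are in bijection with the tetrahedra of $\mathcal{T}$, so $P_{\mathcal{T}}$ has at least two of them. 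Conversely, every special spine of $\mathcal{M}\setminus\mathrm{int}(B^3)$ with at least two true vertices arises in this way from a unique one-vertex triangulation, and under this correspondence a 2-3 Pachner move on $\mathcal{T}$ is exactly a Matveev-Piergallini \emph{T-move} on $P_{\mathcal{T}}$.

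Next, I would invoke the Matveev-Piergallini theorem on special spines: any two special spines of a compact 3-manifold with non-empty boundary, each having at least two true vertices, are connected by a sequence of T-moves and inverse T-moves in which every intermediate spine is again special with at least two true vertices. Applying this to the dual spines of two arbitrary one-vertex triangulations of $\mathcal{M}$, and then translating the resulting sequence back across the duality, produces the desired path in the Pachner graph.

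The main obstacle is the Matveev-Piergallini theorem itself, which I would cite rather than reprove. That theorem starts from Pachner's classical result (any two triangulations of a closed PL 3-manifold are connected by a sequence of 2-3, 3-2, 1-4, and 4-1 moves) and then shows that each vertex-changing 1-4 or 4-1 move appearing in such a sequence can be replaced, on a sufficiently large spine, by a finite sequence of T-moves. This simulation step is a delicate local combinatorial analysis that constitutes the technical heart of the argument and would dominate any from-scratch proof.
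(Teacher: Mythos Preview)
Your proposal is correct and aligns with the paper's treatment: the paper does not supply its own proof but simply attributes the theorem to Matveev and Piergallini, noting that they did not phrase it in Pachner-graph language. What you have written is precisely the standard dictionary---one-vertex triangulations $\leftrightarrow$ special spines of the once-punctured manifold, 2-3/3-2 moves $\leftrightarrow$ T-moves---that makes this attribution explicit, and then an appeal to the Matveev--Piergallini connectedness theorem for special spines; there is nothing more to add.
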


An analogous result holds for one-vertex ideal triangulations,
as a special case of a theorem due to Amendola~\cite{Amendola2005}.
An alternative proof of Amendola's theorem can also be found in~\cite{RST2019}.

Since we are particularly interested in the \emph{edges} of one-vertex triangulations,
it is worth emphasising how the 2-3 and 3-2 moves affect the $1$-skeleton of a triangulation.
A 2-3 move creates a new edge, but otherwise leaves the $1$-skeleton untouched.
A 3-2 move about an edge $e$ removes this edge, but otherwise leaves the $1$-skeleton untouched.

\subsection{Embedded surfaces}\label{subsec:embSurfs}

We can often learn a lot about a $3$-manifold by looking at the embedded surfaces that it contains.
Here, we review some standard terminology for such surfaces.

Throughout this section, let $S$ denote a surface embedded in
a compact $3$-manifold $\mathcal{M}$ (possibly with boundary).
We say that $S$ is \textbf{properly} embedded in $\mathcal{M}$ if $S\cap\partial\mathcal{M} = \partial S$.

If $S$ is a $2$-sphere, then we say that $S$ is \textbf{inessential} if it
bounds a $3$-ball in $\mathcal{M}$, and \textbf{essential} otherwise.
The $3$-manifold $\mathcal{M}$ is \textbf{reducible} if it
contains an essential $2$-sphere, and \textbf{irreducible} otherwise.

A disc $D$ embedded in $\mathcal{M}$ is a \textbf{compression disc} for $S$ if $D\cap S=\partial D$.
Such a compression disc $D$ is \textbf{inessential} if the curve $\partial D$ bounds
another disc that lies entirely inside $S$; otherwise, $D$ is \textbf{essential}.
Figure~\ref{fig:InessentialCompressingDisc} illustrates an inessential compression disc.
The surface $S$ is \textbf{compressible} if it admits an essential compression disc, and \textbf{incompressible} otherwise.
In the special case where the surface $S$ is just $\partial\mathcal{M}$,
notice that a compression disc for $S$ is the same thing as a properly embedded disc in $\mathcal{M}$;
thus, whenever we are working with a \emph{properly} embedded disc $D$,
we will also often refer to $D$ as a compression disc (without needing to specify the surface $\partial\mathcal{M}$).
If $\partial\mathcal{M}$ is compressible, then we say that
$\mathcal{M}$ is \textbf{$\partial$-reducible}; otherwise, $\mathcal{M}$ is \textbf{$\partial$-irreducible}.

\begin{figure}[htbp]
\centering
\begin{tikzpicture}
\node[inner sep=0pt] at (0,0) {
	\includegraphics[scale=1.25]{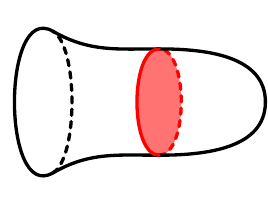}};

\node at (-0.25,0) {\Large\textcolor{red}{$D$}};
\node at (-2.7,0.85) {\Large $S$};
\end{tikzpicture}
\caption{An inessential compression disc $D$ for a surface $S$.}
\label{fig:InessentialCompressingDisc}
\end{figure}

We say that $\mathcal{M}$ is \textbf{sufficiently large} if it contains a
two-sided incompressible surface other than $S^2$ (the $2$-sphere) or $\mathbb{R}P^2$ (the real projective plane);
otherwise, $\mathcal{M}$ is \textbf{small}.
We say that $\mathcal{M}$ is \textbf{Haken} if it is irreducible, $\partial$-irreducible and sufficiently large;
otherwise, $\mathcal{M}$ is \textbf{non-Haken}.
The terms ``small'' and ``non-Haken'' are sometimes used interchangeably because it is very common
to work exclusively with $3$-manifolds that are irreducible and $\partial$-irreducible.

Suppose, for this paragraph, that $S$ has non-empty boundary and is properly embedded.
A disc $D$ embedded in $\mathcal{M}$ is a \textbf{$\partial$-compression disc} for $S$ if there are two arcs
$\alpha$ and $\beta$ such that $\alpha=D\cap S$, $\beta=D\cap\partial\mathcal{M}$,
$\alpha\cup\beta=\partial D$, and $\alpha\cap\beta=\partial\alpha=\partial\beta$.
Such a $\partial$-compression disc $D$ is \textbf{inessential} if there is another arc $\gamma$ in $\partial S$ such that
$\alpha\cap\gamma=\partial\alpha=\partial\gamma$ and the curve $\alpha\cup\gamma$ bounds another disc that lies entirely inside $S$;
otherwise, $D$ is \textbf{essential}.
Figure~\ref{fig:InessentialBdryCompressingDisc} illustrates an inessential $\partial$-compression disc.
The surface $S$ is \textbf{$\partial$-compressible} if it admits an essential $\partial$-compression disc,
and \textbf{$\partial$-incompressible} otherwise.

\begin{figure}[htbp]
\centering
\begin{tikzpicture}
\node[inner sep=0pt] at (0,0) {
	\includegraphics[scale=1.25]{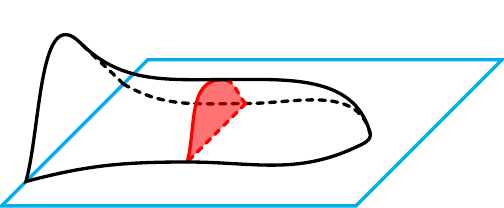}};

\node at (-1.55,-0.5) {\Large\textcolor{red}{$D$}};
\node at (1.85,-1.75) {\Large\textcolor{cyan}{$\partial\mathcal{M}$}};
\node at (-4.6,0.85) {\Large $S$};
\end{tikzpicture}
\caption{An inessential $\partial$-compression disc $D$ for a surface $S$.}
\label{fig:InessentialBdryCompressingDisc}
\end{figure}

Finally, suppose $S$ is properly embedded.
We say that $S$ is \textbf{boundary parallel} if there is an isotopy fixing $\partial S$ that sends $S$ to a subsurface of $\partial\mathcal{M}$.
In the case where $S$ is neither a $2$-sphere nor a disc, we say that $S$ is \textbf{essential} if
it is incompressible, $\partial$-incompressible, and not boundary parallel.

\subsection{Normal surfaces}\label{subsec:normSurfs}

As we discuss in Sections~\ref{subsec:handlebody} and~\ref{subsec:edgeSeifert},
our work relies on a number of results from normal surface theory.
Here, we only outline the ideas that are required for our purposes;
see~\cite{HLP1999} and~\cite{JacoRubinstein2003} for more comprehensive overviews.

A (possibly disconnected) properly embedded surface $S$ in a $3$-manifold triangulation $\mathcal{T}$ is a \textbf{normal surface} if:
\begin{itemize}
\item $S$ meets each simplex (i.e., vertex, edge, triangle, or tetrahedron) of $\mathcal{T}$ transversely; and
\item $S$ meets each tetrahedron $\Delta$ of $\mathcal{T}$ in a finite
(and possibly empty) collection of discs---known
as \textbf{elementary discs}\footnote{Some authors use the term ``normal disc'' (for instance, see~\cite{JacoRubinstein2003}).
We reserve the term ``normal disc'' to refer to a normal surface that forms a properly embedded disc.}---where
each such disc is a curvilinear triangle or quadrilateral whose vertices lie on different edges of $\Delta$.
\end{itemize}
Up to \textbf{normal isotopy} (i.e., an ambient isotopy that preserves every simplex of $\mathcal{T}$),
every elementary disc has one of the seven types shown in Figure~\ref{fig:elemDiscs}.

\begin{figure}[htbp]
	\centering
	\begin{subfigure}[t]{0.25\textwidth}
	\centering
	\includegraphics[scale=0.75]{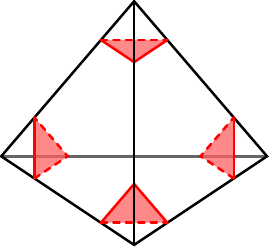}
	\caption{The four triangle types.}
	\end{subfigure}
	\hfill
	\begin{subfigure}[t]{0.7\textwidth}
	\centering
	\begin{tikzpicture}
		\node at (0,0)
			{\includegraphics[scale=0.75]{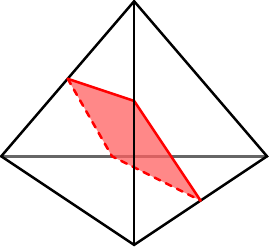}};
		\node at (3.75,0)
			{\includegraphics[scale=0.75]{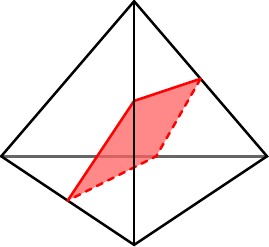}};
		\node at (7.5,0)
			{\includegraphics[scale=0.75]{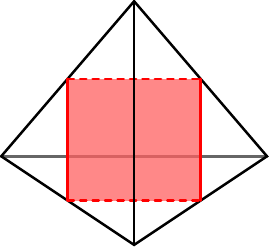}};
	\end{tikzpicture}
	\caption{The three quadrilateral types.}
	\end{subfigure}
	\caption{The seven elementary disc types in a tetrahedron.}
	\label{fig:elemDiscs}
\end{figure}

The simplest example of a normal surface is a \textbf{vertex-linking surface},
which consists entirely of triangles.
Such surfaces always exist, so finding these surfaces never
gives us any new information about the underlying $3$-manifold.
For this reason, we consider a normal surface to be \textbf{non-trivial}
when it includes at least one quadrilateral piece.

If $\mathcal{T}$ is an $n$-tetrahedron triangulation, then we can represent any normal surface in $\mathcal{T}$ as
a \textbf{normal coordinate} vector in $\mathbb{R}^{7n}$ that counts
the number of elementary discs of each type in each tetrahedron.
Conversely, it turns out that the space of vectors in $\mathbb{R}^{7n}$ that
correspond to normal surfaces in $\mathcal{T}$ can be described by a collection of
combinatorial constraints (namely, the \textbf{quadrilateral constraints})
together with a collection of homogeneous linear inequalities.
By homogeneity, every solution to these linear inequalities is a scalar multiple of
a vector in a convex polytope $\mathcal{P}(\mathcal{T})\subset\mathbb{R}^{7n}$.

Given two normal surfaces $S$ and $S'$, we can consider the sum of the two corresponding normal coordinate vectors.
Provided the quadrilateral constraints are still satisfied,
this gives a new normal surface called the \textbf{(Haken) sum} of $S$ and $S'$, denoted $S+S'$.
In particular, we can always take the sum of a normal surface $S$ with itself $k$ times, for some positive integer $k$;
we denote this surface by $kS$.
An important special case is the surface $2S$, which we call the \textbf{double} of $S$.

As we will see in Sections~\ref{subsec:handlebody} and ~\ref{subsec:edgeSeifert},
it is often sufficient to focus our attention on the normal surfaces that
arise as minimal scalar multiples of vertices of the polytope $\mathcal{P}(\mathcal{T})$
(by ``minimal'', we mean the smallest scalar multiple such that the coordinates are all integers);
such surfaces are called \textbf{vertex normal surfaces}\footnote{Some authors use different terminology.
For instance, in~\cite{JacoTollefson1995}, vertex normal surfaces are called ``vertex solutions'',
and the term ``vertex surface'' refers to a connected \emph{two-sided} normal surface
that is either a vertex solution or the double of a vertex solution.}.
Roughly, the significance of vertex normal surfaces is that
they form a finite and algorithmically enumerable ``basis'' for the set of all normal surfaces.

\subsection{Lens spaces}\label{subsec:lens}

We now give a brief review of lens spaces, which will also allow us to specify the conventions that we use.
There are several equivalent ways to define lens spaces.
The most convenient way for our purposes is to construct them by gluing together two solid tori.

We can perform any such gluing by first attaching a thickened disc along
a non-trivial circle $\gamma$ in the boundary of one of the solid tori.
To complete the gluing, we just need to fill the resulting $2$-sphere boundary with a $3$-ball.
Since there is only one way to fill with a $3$-ball, the gluing is completely determined by the choice of the circle $\gamma$.

Thus, we can parametrise all possible lens spaces by parametrising all possible choices for $\gamma$.
To this end, view each solid torus as a product $D^2\times S^1$.
Up to isotopy, every non-trivial circle in the boundary torus $\partial D^2\times S^1$
lifts to a straight line of either rational or infinite slope in the universal cover $\mathbb{R}^2$,
and we can parametrise such circles by this slope;
our convention here will be that a circle of the form $\{x\}\times S^1$ has slope $\infty$,
while a circle of the form $\partial D^2\times\{y\}$ has slope $0$.

With all this in mind, define the \textbf{lens space} $L_{p,q}$ to be the $3$-manifold given by
taking the slope of $\gamma$ to be $\frac{p}{q}$ in the above construction.\footnote{
With the conventions we have chosen here, this is equivalent to saying that
$L_{p,q}$ is obtained by $\frac{p}{q}$ Dehn surgery on the unknot.}
This parametrisation is not unique: it turns out that $L_{p,q}$ and $L_{p',q'}$ are
homeomorphic if and only if $|p'|=|p|$ and $q'\equiv\pm q^{\pm1}\pmod{p}$\cite{Brody1960}.
Some authors exclude degenerate cases---most commonly, the $3$-sphere ($L_{1,0}$)
and/or $S^2\times S^1$ ($L_{0,1}$)---from their definition of lens spaces;
we make no such restrictions here.

\subsection{Seifert fibre spaces}\label{subsec:sfs}

We now give a quick review of Seifert fibre spaces,
which will also allow us to establish some notation.
There are many sources for more comprehensive discussion of Seifert fibre spaces;
much of the material here is based on the discussion in Hatcher's $3$-manifold notes~\cite{Hatcher3Mfld},
but with simplifications in places where it is not necessary to discuss the theory in full generality.

We begin by constructing fibred solid tori.
Take the cylinder $D^2\times[0,1]$, and form a solid torus by gluing
the discs $D^2\times\{0\}$ and $D^2\times\{1\}$ together with a $\frac{2\pi p}{q}$ rotation,
where $q$ is a positive integer, and $p$ is any integer coprime with $q$.
This solid torus is fibred by circles in the following way:
\begin{itemize}
\item There is a fibre $\{0\}\times S^1$ given by closing up the segment $\{0\}\times[0,1]$.
\item The remaining fibres are given by joining together $q$ segments of the form $\{x\}\times[0,1]$.
\end{itemize}
We call this solid torus (with this fibration) a \textbf{fibred solid torus}
with \textbf{multiplicity} $q$.

A \textbf{Seifert fibre space} is a $3$-manifold that is fibred by circles in such a way that
each circle fibre---called a \textbf{Seifert fibre}---has a small solid torus neighbourhood that
forms a fibred solid torus\footnote{Some authors
also allow fibred solid Klein bottle neighbourhoods (for instance, see~\cite{Scott1983}).}.
If the multiplicity of the fibred solid torus neighbourhood is one,
then we call the Seifert fibre a \textbf{regular fibre};
otherwise, we call the Seifert fibre an \textbf{exceptional fibre}\footnote{
Other authors use various other names for exceptional fibres, such as
\emph{multiple fibres}~\cite{Hatcher3Mfld} or \emph{critical fibres}~\cite{Scott1983}.}.
The exceptional fibres are isolated, and they are disjoint from the boundary of the manifold.

Consider a compact Seifert fibre space $\mathcal{M}$,
and let $k$ be the (necessarily finite) number of exceptional fibres in $\mathcal{M}$.
By removing a small solid torus neighbourhood of each exceptional fibre,
we obtain a circle bundle $\mathcal{M}'$ over a surface $B'$;
the surface $B'$ has $k$ boundary components corresponding to the exceptional fibres that we removed
(and possibly some additional boundary components corresponding to boundary components of $\mathcal{M}$).
Let $B$ be the surface obtained by filling these $k$ boundary components of $B'$ with discs;
we say that $\mathcal{M}$ is \textbf{(Seifert) fibred over} $B$, and we call $B$ the \textbf{base surface}.

From this point onwards, we restrict our attention to Seifert fibre spaces that are compact, connected, orientable and irreducible.
The irreducibility restriction only rules out three examples:
\begin{itemize}
\item $S^2\times S^1$ (product of the $2$-sphere $S^2$ and the circle $S^1$),
\item $S^2\simtimes S^1$ (twisted product of $S^2$ and $S^1$), and
\item $\mathbb{R}P^3\ConnSum\mathbb{R}P^3$ (connected sum of two copies of real projective space $\mathbb{R}P^3$).
\end{itemize}

We now say some words about the incompressible, $\partial$-incompressible surfaces\footnote{
In his $3$-manifold notes, Hatcher calls these surfaces ``essential''~\cite[p.~18]{Hatcher3Mfld}.
We do not use this terminology because, as defined in Section~\ref{subsec:embSurfs},
we require that essential surfaces are not boundary parallel.}
in a Seifert fibre space $\mathcal{M}$ (with the restrictions just mentioned).
We call an embedded surface in $\mathcal{M}$ \textbf{vertical} if it consists of a union of regular fibres,
and \textbf{horizontal} if it meets all fibres transversely.
Up to isotopy, every incompressible, $\partial$-incompressible surface in $\mathcal{M}$ is either vertical or horizontal.
In the other direction, every connected two-sided horizontal surface is incompressible and $\partial$-incompressible.
However, this is not quite true for vertical surfaces:
\begin{itemize}
\item if a vertical torus bounds a fibred solid torus, then it is compressible; and
\item if a vertical annulus cuts off from $\mathcal{M}$ a solid torus with the product fibration, then it is $\partial$-compressible.
\end{itemize}
It turns out that these are the only exceptions;
every other connected two-sided vertical surface is incompressible and $\partial$-incompressible.

Suppose, in addition to the restrictions mentioned earlier, that the Seifert fibre space $\mathcal{M}$ is closed.
As mentioned in Section~\ref{subsec:conj}, we are particularly interested in the case where $\mathcal{M}$ is \emph{small}
(recall from Section~\ref{subsec:embSurfs} that this means that $\mathcal{M}$ contains
no two-sided incompressible surfaces other than $S^2$ or $\mathbb{R}P^2$).\footnote{
In the specific context of Seifert fibre spaces, some authors take ``small'' to mean that
we have base surface $S^2$ and at most three exceptional fibres
(for instance, see~\cite{Li2006}).}
It turns out that we can always find an incompressible vertical torus in $\mathcal{M}$ unless either:
\begin{itemize}
\item $\mathcal{M}$ is fibred over $S^2$ with at most three exceptional fibres; or
\item $\mathcal{M}$ is fibred over $\mathbb{R}P^2$ with at most one exceptional fibre.
\end{itemize}
In other words, if $\mathcal{M}$ is small, then it must be fibred in one of these two ways
(but the converse is not true, because a $3$-manifold that is fibred in this way
can still contain two-sided incompressible \emph{horizontal} surfaces).

Consider the case where $\mathcal{M}$ is fibred over $\mathbb{R}P^2$ with at most one exceptional fibre.
If there are no exceptional fibres, then $\mathcal{M}$ must actually be homeomorphic to $\mathbb{R}P^3 \ConnSum \mathbb{R}P^3$;
this is one of the reducible cases that we chose to ignore earlier.
If there is one exceptional fibre (still with base surface $\mathbb{R}P^2$), then $\mathcal{M}$ is a \textbf{prism manifold}\footnote{
This terminology is consistent with~\cite{Scott1983}.
Other authors define prism manifolds differently (for instance, see~\cite{LackenbySchleimer2022}).}.
For various reasons, prism manifolds are often treated as a special case in the literature.
One of the reasons is that prism manifolds also admit a fibration over $S^2$ with three exceptional fibres (we will say more about this shortly).
Since we are looking for triangulations with no Seifert fibre edges,
the existence of multiple Seifert fibrations is a little inconvenient for our purposes,
so we will usually exclude prism manifolds from our discussions.

We now consider the case where $\mathcal{M}$ is fibred over $S^2$ with at most \emph{two} exceptional fibres.
In this case, $\mathcal{M}$ is a lens space, and it has \emph{infinitely} many Seifert fibrations.
For this reason, we will also usually exclude lens spaces when we are working with small Seifert fibre spaces.

This leaves the case where $\mathcal{M}$ is fibred over $S^2$ with exactly three exceptional fibres.
Apart from the prism manifolds mentioned earlier, these all have a unique Seifert fibration.
The following construction gives a natural way to parametrise these Seifert fibre spaces:

\begin{construction}[Seifert fibre spaces with base surface $S^2$ and three exceptional fibres]\label{cons:sfs}
Take the circle bundle $P\times S^1$, where $P$ is a \textbf{pair of pants} (i.e., a $2$-sphere with three small discs removed),
and glue a solid torus to each of the three torus boundary components of our circle bundle.

We specify the three gluings by their slopes
(similar to how we constructed lens spaces in Section~\ref{subsec:lens}).
To ensure that the slopes are well-defined, fix an orientation of $P\times S^1$.
Letting $b_0$, $b_1$ and $b_2$ denote the circles that form the boundary components of $P$,
fix any particular $i\in\{0,1,2\}$ and consider the torus boundary component $b_i\times S^1$.
Here, we use the convention that a circle of the form $b_i\times\{y\}$ has slope $0$,
and any fibre in $b_i\times S^1$ (which is of the form $\{x\}\times S^1$) has slope $\infty$.
Any gluing of a solid torus $\mathcal{T}$ to $b_i\times S^1$ is completely determined by the slope of
a non-trivial circle along which we attach a meridian disc of $\mathcal{T}$,
so we can parametrise the gluing by this slope.

Letting $\frac{\alpha_i}{\beta_i}\in\mathbb{Q}$ (with $\beta_i>0$ and $\gcd(\alpha_i,\beta_i)=1$)
denote the slope by which we glue a solid torus $\mathcal{T}$ to $b_i\times S^1$,
we can extend the fibration on the circle bundle $P\times S^1$ so that
$\mathcal{T}$ becomes a fibred solid torus of multiplicity $\beta_i$.
Thus, provided we glue all three solid tori via non-integer slopes,
the $3$-manifold that results from this construction will be fibred over $S^2$ with exactly three exceptional fibres;
throughout this paper, we will denote this Seifert fibre space by
$\mathcal{S}\left( \frac{\alpha_0}{\beta_0}, \frac{\alpha_1}{\beta_1}, \frac{\alpha_2}{\beta_2} \right)$.
\end{construction}

The parametrisation given at the end of Construction~\ref{cons:sfs} is not unique;
see Proposition~2.1 of~\cite{Hatcher3Mfld} for a precise description of when
two such parametrisations describe the same Seifert fibration.

We also mentioned earlier that it is possible for a $3$-manifold to be Seifert fibred in more than one way.
In particular, we noted that for base surface $S^2$ and three exceptional fibres, such non-uniqueness only occurs for the prism manifolds.
We can now describe this non-uniqueness more precisely:
in addition to being fibred over $\mathbb{R}P^2$ with one exceptional fibre, each prism manifold also admits a Seifert fibration of the form
$\mathcal{S}\left( \frac{1}{2}, -\frac{1}{2}, \frac{\alpha}{\beta} \right)$.

\section{Key tools}\label{sec:tools}

This section discusses some new algorithms and implementations that were crucial for our work.
We anticipate that these algorithms may be useful for other purposes.

\subsection{Handlebody recognition}\label{subsec:handlebody}

To find counterexamples to Conjectures~\ref{conj:core} and~\ref{conj:tunnel},
we need to have algorithms to recognise solid tori and genus-2 handlebodies.
\Regina\ includes an implementation of solid torus
recognition~\cite{Haken1961,JacoTollefson1995} that is
remarkably efficient in practice~\cite{Regina,Burton2013Regina,BurtonOzlen12}.
We generalise this to recognise handlebodies of arbitrary genus,
improving the earlier algorithm of Jaco and Tollefson~\cite[Algorithm~9.3]{JacoTollefson1995}.

The following theorem---which is an immediate consequence of Corollary~6.4 of~\cite{JacoTollefson1995},
due to Jaco and Tollefson---lies at the heart of our handlebody recognition algorithm:

\begin{theorem}\label{thm:compressionDisc}
Let $\mathcal{T}$ be a triangulation of a compact irreducible $3$-manifold with compressible boundary.
Then $\mathcal{T}$ contains a vertex normal surface that realises an essential compression disc.
\end{theorem}

Note that handlebodies satisfy the conditions for this theorem.
With this in mind, the basic strategy of our algorithm is to repeatedly
cut along vertex normal compression discs;
a triangulation $\mathcal{T}$ is a handlebody if and only if
doing this eventually decomposes $\mathcal{T}$ into a collection of $3$-balls.
For computational reasons, we modify this approach in two ways.

First, we expand our focus beyond discs, to include any vertex normal surface $S$ whose Euler characteristic $\chi(S)$ is positive.
The rationale is that $\chi(S)$ can be expressed as a linear function in normal coordinates,
which allows us to exploit linear programming techniques.
This leads to an algorithm for detecting non-trivial normal spheres and discs that is fast in practice
(even though, in theory, this is actually the exponential-time bottleneck for
both solid torus recognition and handlebody recognition);
see~\cite{BurtonOzlen12} for details.

Second, because cutting along a normal surface can increase the number of tetrahedra exponentially,
we instead use a technique called \textbf{crushing}.
Crushing was first developed by Jaco and Rubinstein~\cite{JacoRubinstein2003},
following earlier unpublished work of Casson,
and was later refined by the first author~\cite{Burton2014}.
Crushing a non-trivial normal surface has the following crucial benefit:
it is guaranteed to produce a new triangulation with strictly fewer tetrahedra than before.
The trade-off is that crushing a normal sphere or disc could have topological side-effects,
as detailed in the following theorem (this is a restatement of Theorem~2 of~\cite{Burton2014},
which is itself a summary of a series of results from~\cite{JacoRubinstein2003}):

\begin{theorem}\label{thm:crushing}
Let $\mathcal{T}$ be a $3$-manifold triangulation whose underlying $3$-manifold $\mathcal{M}$ is orientable.
Let $S$ be a normal $2$-sphere or disc in $\mathcal{T}$.
Crushing $S$ yields a $3$-manifold triangulation $\mathcal{T}'$ whose underlying $3$-manifold $\mathcal{M}'$ is
obtained from $\mathcal{M}$ by a finite (and possibly empty) sequence of the following operations:
\begin{itemize}[nosep]
\item undoing connected sums;
\item cutting along properly embedded discs;
\item filling boundary $2$-spheres with $3$-balls; and
\item deleting $3$-ball, $3$-sphere, $\mathbb{R}P^3$, $L_{3,1}$ or $S^2\times S^1$ components.
\end{itemize}
\end{theorem}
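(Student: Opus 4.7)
The plan is to decompose the effect of crushing into two conceptually distinct steps and analyse each: (i) the purely topological operation of cutting $\mathcal{M}$ open along $S$, and (ii) the subsequent combinatorial collapse of the region of $\mathcal{T}$ that meets $S$.

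I would begin by recording the definition of crushing in combinatorial terms. Crushing $S$ proceeds tetrahedron by tetrahedron: for every tetrahedron $\Delta$ that $S$ meets non-trivially, $S \cap \Delta$ partitions $\Delta$ into a collection of corner blocks (one near each vertex of $\Delta$, bounded by an elementary triangle) together with at most one central block when a quadrilateral is present. The crushing operation discards the interior of every central block and collapses each corner block onto the vertex it surrounds; the resulting cell complex is then re-identified along the old face pairings of $\mathcal{T}$ to produce the new triangulation $\mathcal{T}'$. This is precisely what forces $\lvert \mathcal{T}' \rvert < \lvert \mathcal{T} \rvert$, since every tetrahedron meeting $S$ contributes strictly less than one tetrahedron to $\mathcal{T}'$, while the tetrahedra disjoint from $S$ survive intact.

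Step (i) accounts directly for the first three bullet points. If $S$ is a sphere, then cutting along $S$ either undoes a connected sum (in the separating, essential case), produces two components one of which is a $3$-ball (in the separating, inessential case), or, in the non-separating case, replaces an $S^2\times S^1$ summand by a pair of boundary spheres. Any boundary sphere created this way will subsequently be filled by a $3$-ball, provided the small piece it bounds is consumed by step (ii). If $S$ is a disc, then cutting along it is itself listed explicitly as one of the permitted operations.

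The main work, and the chief obstacle, is step (ii): one must show that the only components of $\mathcal{T}$ cut along $S$ that the combinatorial collapse can destroy entirely are the five listed manifolds. I would prove this by classifying which components get swept up completely by the corner-block collapse. Every tetrahedron in such a component must meet $S$ non-trivially (otherwise it survives the crushing), so the collapse re-identifies neighbouring corner blocks entirely across the original face pairings in that component. A finite enumeration of the allowable patterns of elementary discs filling such a component, together with the combinatorial identifications that the collapse imposes on its corner blocks, yields a short list of candidate manifolds; checking each by hand gives exactly the $3$-ball, $3$-sphere, $\mathbb{R}P^3$, $L_{3,1}$ and $S^2\times S^1$. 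The $\mathbb{R}P^3$ and $L_{3,1}$ cases are the most delicate, arising precisely when the re-identification across a fully-crushed component introduces a non-trivial twist, and ruling out further anomalies is where the majority of the technical bookkeeping resides.
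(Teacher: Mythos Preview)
The paper does not prove this theorem at all: it is quoted as a restatement of Theorem~2 of~\cite{Burton2014}, which in turn summarises results of Jaco and Rubinstein~\cite{JacoRubinstein2003}. So there is no in-paper argument to compare against, and your sketch must be judged on its own terms.

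Your description of the crushing procedure is wrong in a way that matters. You say that $S\cap\Delta$ partitions $\Delta$ into corner blocks ``together with at most one central block when a quadrilateral is present'', and that crushing ``discards the interior of every central block and collapses each corner block onto the vertex it surrounds''. First, a normal surface can meet $\Delta$ in arbitrarily many parallel elementary discs, so in addition to corner pieces there are typically many triangular-prism ``parallel'' blocks, and when a quadrilateral is present the non-corner region splits into two wedge pieces rather than one central block. Second, and more seriously, you have the survival rule inverted: in the Jaco--Rubinstein procedure it is the central \emph{truncated-tetrahedron} block (present exactly when $\Delta$ contains no quadrilateral of $S$) that survives as a tetrahedron of $\mathcal{T}'$, while the corner blocks, parallel prisms and wedges are what get flattened away. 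Under your description the only surviving tetrahedra would be those disjoint from $S$, which is not the crushed triangulation.

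Your step~(ii) is also not how the argument runs, and the proposed strategy would not work. The exotic components $\mathbb{R}P^3$, $L_{3,1}$ and $S^2\times S^1$ do not appear because some component of $\mathcal{T}$ cut along $S$ happens to be one of these manifolds and is then ``swept up completely''; they appear as side-effects of flattening specific degenerate local configurations (for instance, a $3$-sided football whose bigon faces are identified in certain ways, or a bigon pillow glued to itself). The actual proof in~\cite{Burton2014} works cell-by-cell: it classifies the finitely many non-tetrahedral cell types produced by cutting, and for each type shows that flattening that single cell realises one of the listed topological operations on the ambient manifold. Your ``finite enumeration of allowable patterns of elementary discs filling such a component'' is not finite, is at the wrong granularity, and does not isolate where the lens-space side-effects come from.
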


For the rest of this section, call a connected orientable $3$-manifold \textbf{interesting} if:
\begin{itemize}
\item it has exactly one boundary component; and
\item its first homology is $\mathbb{Z}^g$, where $g$ is the genus of the boundary surface.
\end{itemize}
In other words, we consider a $3$-manifold $\mathcal{M}$ to be interesting if it
has the same basic properties as a handlebody, which means that we need
more sophisticated techniques to decide whether $\mathcal{M}$ is a handlebody.
In analogy with homology spheres, one could also say that an interesting $3$-manifold is a ``homology handlebody''.
Our handlebody recognition algorithm relies on the following consequence of Theorem~\ref{thm:crushing}:

\begin{proposition}\label{prop:crushHandlebody}
Let $\mathcal{T}$ be a bounded triangulation of an interesting $3$-manifold,
and suppose that $\mathcal{T}$ contains a non-trivial normal $2$-sphere or disc $S$.
Let $\mathcal{T}'$ denote the triangulation obtained by crushing $S$.
Each component of $\mathcal{T}'$ is either:
\begin{itemize}[nosep]
\item a closed $3$-manifold with trivial first homology; or
\item (a bounded triangulation of) an interesting $3$-manifold.
\end{itemize}
Moreover, $\mathcal{T}$ is a handlebody if and only if
every component of $\mathcal{T}'$ is either a $3$-sphere or a handlebody.
\end{proposition}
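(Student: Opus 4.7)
The plan is to induct on the length of the operation sequence provided by Theorem~\ref{thm:crushing}; the base case (empty sequence) is trivial, and each inductive step requires analyzing the four allowed operations.

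For Part 1, I would maintain the invariant that every component is closed or interesting throughout the sequence. Undoing a connected sum of an interesting component $N = A \# B$ (with $A$ inheriting $\partial N$ and $B$ closed) is handled by combining $H_1(N) = H_1(A) \oplus H_1(B) = \mathbb{Z}^g$ with half-lives-half-dies (which gives $\mathrm{rank}\, H_1(A) \geq g$) to force $H_1(A) = \mathbb{Z}^g$ and $H_1(B) = 0$. Cutting along a properly embedded disc $D$ uses the key observation that, for interesting $N$, one has $H_2(N) = 0$ (compute via Euler characteristic, using $\chi(N) = 1 - g = \tfrac{1}{2}\chi(\partial N)$), which via the $\mathbb{Z}/2$-intersection pairing forces $D$ to separate $N$ if and only if $\partial D$ separates $\partial N$; in the non-separating subcase, van Kampen gives $\pi_1(N) = \pi_1(N') * \mathbb{Z}$ so $N'$ is interesting of genus $g-1$, and in the separating subcase, Mayer-Vietoris together with half-lives-half-dies on each piece splits $\mathbb{Z}^g$ as $\mathbb{Z}^{g_1} \oplus \mathbb{Z}^{g_2}$ with $g_1 + g_2 = g$, so both pieces are interesting. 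Filling a $2$-sphere (which only fires on a genus-$0$ interesting piece) and deleting one of the listed components preserve the invariant by inspection.

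For Part 2, the forward direction $(\Rightarrow)$ runs a parallel induction on the stronger invariant that every component is a handlebody or $S^3$. The main inputs are: handlebody irreducibility forces undoing a connected sum of a handlebody $H$ to produce the pair $(H, S^3)$; standard compression analysis shows cutting a handlebody along a properly embedded disc yields handlebodies; filling fires only on $B^3$, giving $S^3$; and the deletion list meets $\{\mathrm{handlebody}, S^3\}$ only in $\{B^3, S^3\}$. The reverse direction $(\Leftarrow)$ first requires noting that, because $\mathcal{M}$ is interesting, Part 1 forces every closed component appearing in the sequence to have $H_1 = 0$, which rules out $\mathbb{R}P^3$, $L_{3,1}$ and $S^2 \times S^1$ ever appearing; further, any closed prime homology sphere other than $S^3$ would persist into $\mathcal{T}'$ (since it is closed, prime, and not in the deletion list), so the hypothesis that every component of $\mathcal{T}'$ is a handlebody or $S^3$ forbids these too. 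Given this, each reverse operation (connected sum of a handlebody with $S^3$, $1$-handle attachment within or between handlebodies, removing a ball from $S^3$, and re-inserting $B^3$ or $S^3$) stays within the class of handlebodies and $S^3$'s; since $\mathcal{M}$ is a single connected component that is interesting, it must be a handlebody.

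The main obstacle I expect is the cutting-along-a-disc case in Part 1: ruling out a non-separating disc whose boundary curve is separating on $\partial N$ (which would split off an extra boundary component and break interestingness). The $H_2(N) = 0$ intersection argument handles this cleanly but uses the interesting hypothesis essentially. A secondary concern in the $(\Leftarrow)$ direction of Part 2 is justifying that exotic (non-$S^3$) homology-sphere summands cannot have been silently created and destroyed during the operation sequence; the persistence argument sketched above is the key, leveraging the deliberately narrow deletion list in Theorem~\ref{thm:crushing}.
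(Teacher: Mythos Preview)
Your proposal is correct and follows the same overall inductive structure as the paper's proof: both induct on the length of the operation sequence from Theorem~\ref{thm:crushing}, maintaining the invariant that every component is closed or interesting, and checking the handlebody characterisation operation-by-operation.

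The genuine difference lies in how you handle cutting along a properly embedded disc. The paper treats the non-separating disc by first supposing, for contradiction, that $\partial D$ separates $\partial\mathcal{M}$, and then deriving a rank contradiction via a fairly involved Mayer--Vietoris argument (building an auxiliary closed surface and a dual loop); it then computes $H_1(\mathcal{M}')$ by a second Mayer--Vietoris sequence. Your route is cleaner: the observation that $H_2(N)=0$ (torsion-freeness from Lefschetz duality, rank from the Euler-characteristic identity $\chi(N)=\tfrac{1}{2}\chi(\partial N)$) makes the boundary map $H_2(N,\partial N;\mathbb{Z}/2)\to H_1(\partial N;\mathbb{Z}/2)$ injective, so $D$ separates $N$ iff $\partial D$ separates $\partial N$ in one stroke, and then the wedge/van Kampen description $N\simeq N'\vee S^1$ gives $H_1(N')\cong\mathbb{Z}^{g-1}$ immediately. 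This is shorter and more conceptual than the paper's argument.

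For the reverse implication in Part~2 you reason globally (reverse the operation sequence, having first argued that every closed component must be $S^3$), whereas the paper establishes the iff locally at each step. Your global argument is correct, and in fact your explicit discussion of why $\mathbb{R}P^3$, $L_{3,1}$, $S^2\times S^1$ cannot arise (and why a non-$S^3$ prime homology-sphere summand would persist to $\mathcal{T}'$) is more careful than the paper, which relegates the $H_1=0$ observation to a footnote and calls it ``not important'' even though the step-by-step iff would otherwise fail when such a component is deleted. One small point worth tightening: your persistence claim is phrased for \emph{prime} homology spheres, but intermediate closed components need not be prime; the fix is to track prime summands through the sequence, noting that ``undo connected sum'' only redistributes them and ``delete'' can only remove $S^3$ among homology spheres.
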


\begin{proof}
The underlying $3$-manifold of $\mathcal{T}'$ is obtained
from the underlying $3$-manifold of $\mathcal{T}$ by performing
some sequence of the operations listed in Theorem~\ref{thm:crushing}.
We prove the proposition by induction on the number of such operations.

For the base case, if we do not perform any of the operations from Theorem~\ref{thm:crushing},
then the underlying $3$-manifolds for $\mathcal{T}$ and $\mathcal{T}'$ are homeomorphic.
In this case, there is nothing to prove.

For the inductive step, suppose we alter the underlying $3$-manifold of $\mathcal{T}$ using some sequence of operations from Theorem~\ref{thm:crushing}.
Let $\mathcal{M}$ denote any particular component of the resulting $3$-manifold,
and let $\mathcal{M}'$ denote the $3$-manifold obtained from $\mathcal{M}$ by performing any single operation from Theorem~\ref{thm:crushing}.
It suffices to prove the following two claims:
\begin{enumerate}[label={(\alph*)}]
\item\label{handlebody:closedOrInteresting}
Each component of $\mathcal{M}'$ is either: a closed $3$-manifold with trivial first homology, or an interesting $3$-manifold.
\item\label{handlebody:sphereOrHandlebody}
The $3$-manifold $\mathcal{M}$ is either a $3$-sphere or a handlebody if and
only if every component of $\mathcal{M}'$ is either a $3$-sphere or a handlebody.
\end{enumerate}
The justification for these two claims depends on whether $\mathcal{M}$ is closed or interesting,
and on which operation from Theorem~\ref{thm:crushing} relates $\mathcal{M}$ and $\mathcal{M}'$.

We begin with the case where $\mathcal{M}$ is closed, since this case is comparatively straightforward.
Only two of the operations from Theorem~\ref{thm:crushing} apply here:
\begin{itemize}
\item We could delete $\mathcal{M}$ entirely, in which case $\mathcal{M}'$ is empty.
This only occurs if $\mathcal{M}$ is a copy of $S^3$, $L_{3,1}$ or $S^2\times S^1$;
however, as part of the inductive hypothesis, we may assume that $\mathcal{M}$ has trivial first homology, so $\mathcal{M}$ must in fact be homeomorphic to $S^3$.
Thus, in this case, we see that claims~\ref{handlebody:closedOrInteresting} and~\ref{handlebody:sphereOrHandlebody} both hold trivially.
\item We could undo a connected sum, in which case $\mathcal{M}'$ consists of two closed components $\mathcal{C}_1$ and $\mathcal{C}_2$.
Since
\[
H_1(\mathcal{C}_1) \oplus H_1(\mathcal{C}_2) \simeq H_1(\mathcal{C}_1\ConnSum\mathcal{C}_2) \simeq H_1(\mathcal{M}) \simeq 0,
\]
we conclude that $\mathcal{C}_1$ and $\mathcal{C}_2$ both have trivial first homology,
and hence that claim~\ref{handlebody:closedOrInteresting} holds.
Claim~\ref{handlebody:sphereOrHandlebody} also holds, since $\mathcal{M}$ is a $3$-sphere
if and only if $\mathcal{C}_1$ and $\mathcal{C}_2$ are both $3$-spheres.
\end{itemize}

With that out of the way, we devote the rest of this proof to the case where $\mathcal{M}$ is interesting;
let $g$ denote the genus of the boundary surface $\partial\mathcal{M}$.
Of the operations listed in Theorem~\ref{thm:crushing}, the following are relatively straightforward to handle:
\begin{itemize}
\item We could delete $\mathcal{M}$ entirely, in which case $\mathcal{M}'$ is empty;
this only occurs if $\mathcal{M}$ is a $3$-ball.
In this case, claims~\ref{handlebody:closedOrInteresting} and~\ref{handlebody:sphereOrHandlebody} both hold trivially.
\item If $\mathcal{M}$ has $2$-sphere boundary, then we could fill this $2$-sphere with a $3$-ball, in which case $\mathcal{M}'$ is closed.
Since $\mathcal{M}$ is interesting and has genus-$0$ boundary, we have $H_1(\mathcal{M})=0$, which implies that $\mathcal{M}'$ also has trivial first homology;
thus, claim~\ref{handlebody:closedOrInteresting} holds.
Moreover, $\mathcal{M}$ is a $3$-ball if and only if $\mathcal{M}'$ is a $3$-sphere, so claim~\ref{handlebody:sphereOrHandlebody} also holds.
\item We could undo a connected sum.
In this case, since $\mathcal{M}$ has exactly one boundary component, $\mathcal{M}'$ has two components:
a closed component $\mathcal{C}$, and a component $\mathcal{B}$ with the same boundary as $\mathcal{M}$.
Since $H_1(\partial\mathcal{B})$ has rank $2g$, we know that $H_1(\mathcal{B})$ has rank at least $g$ (see Lemma~3.5 of~\cite{Hatcher3Mfld}).
Thus, since
\[
H_1(\mathcal{B})\oplus H_1(\mathcal{C})\simeq H_1(\mathcal{B}\ConnSum\mathcal{C})\simeq H_1(\mathcal{M})\simeq \mathbb{Z}^g,
\]
we conclude that $H_1(\mathcal{B})\simeq \mathbb{Z}^g$ and $H_1(\mathcal{C})\simeq0$,
and hence that claim~\ref{handlebody:closedOrInteresting} holds.
Moreover, $\mathcal{M}$ is a handlebody if and only if $\mathcal{C}$ is a $3$-sphere and $\mathcal{B}$ is a handlebody,
so claim~\ref{handlebody:sphereOrHandlebody} also holds.\footnote{
Here, it is crucial that $\mathcal{M}$ has exactly one boundary component,
as this rules out the case where $\mathcal{M}$ is a connected sum of two handlebodies
(otherwise, it would be possible for $\mathcal{M}'$ to be the disjoint union of
these two handlebodies, even though $\mathcal{M}$ is not itself a handlebody).
Later in the proof, we will see why it is necessary to make the
stronger assumption that $\mathcal{M}$ is actually interesting.}
\end{itemize}
It remains to consider what happens if we cut along a properly embedded disc $D$;
the argument for this remaining operation requires a little more effort than the other operations.

We first consider the case where $D$ is a \emph{separating} disc.
In this case, $\mathcal{M}'$ has two components $\mathcal{B}_1$ and $\mathcal{B}_2$,
each with a single boundary component.
For each $i\in\{1,2\}$, let $g_i$ denote the genus of the boundary surface $\partial\mathcal{B}_i$;
note that $H_1(\mathcal{B}_i)$ has rank at least $g_i$.
By retracting $D$ to a point, we see that $\mathcal{M}$ is homotopy-equivalent to
the wedge sum $\mathcal{B}_1\vee\mathcal{B}_2$, and hence that
\[
H_1(\mathcal{B}_1)\oplus H_1(\mathcal{B}_2)\simeq
H_1(\mathcal{B}_1\vee\mathcal{B}_2)\simeq H_1(\mathcal{M})\simeq \mathbb{Z}^g.
\]
From this, together with the fact that $g=g_1+g_2$,
we conclude that $\mathcal{B}_1$ and $\mathcal{B}_2$ are both interesting, and hence that claim~\ref{handlebody:closedOrInteresting} holds.
Claim~\ref{handlebody:sphereOrHandlebody} also holds, since $\mathcal{M}$ is a handlebody if and only if
$\mathcal{B}_1$ and $\mathcal{B}_2$ are both handlebodies.

All that remains is to consider the case where $D$ is \emph{non-separating},
in which case $\mathcal{M}'$ consists of a single bounded component;
this case requires much more work than all the preceding cases.
We begin by noting that $\mathcal{M}$ is a handlebody if and only if $\mathcal{M}'$ is a handlebody,
and hence that claim~\ref{handlebody:sphereOrHandlebody} holds.
Thus, the rest of this proof is entirely devoted to showing that $\mathcal{M}'$ is interesting,
and hence verifying claim~\ref{handlebody:closedOrInteresting}.
The first step is to show that $\partial D$ is a non-separating curve on the boundary surface $\partial\mathcal{M}$,
as this will imply that $\mathcal{M}'$ has a single boundary component.\footnote{
Earlier, we pointed out that it is crucial that we never
create a component with more than one boundary component.
As we will see shortly, proving this in the present case requires the assumption that
$H_1(\mathcal{M})\simeq \mathbb{Z}^g$, which is why we need to assume that $\mathcal{M}$ is interesting.}

Suppose instead that $\partial D$ separates $\partial\mathcal{M}$ into two pieces $P$ and $Q$.
Let $h$ denote the genus of $P$.
Since $D$ is a non-separating disc, there is a closed loop $\gamma$ in
the interior of $\mathcal{M}$ that meets $D$ exactly once.
Push $P\cup D$ slightly off the boundary to obtain a closed genus-$h$ surface $E$ that meets $\gamma$ exactly once.
Let $X'$ be a small neighbourhood of $E\cup\gamma$, and let $Y'=\mathcal{M}-X'$.
The boundary of $X'$ is a surface of genus $2h$;
let $N$ be a small neighbourhood of this surface.
Taking $X=X'\cup N$ and $Y=Y'\cup N$ gives a pair of subspaces whose interiors together cover $\mathcal{M}$.
Noting that $X\cap Y=N$,
the Mayer-Vietoris sequence for $X$ and $Y$ gives us the following exact sequence:
\begin{center}
\begin{tikzcd}[row sep=tiny,
ar symbol/.style = {draw=none,"\textstyle#1" description,sloped},
isom/.style = {ar symbol={\simeq}},]
\cdots \ar[r] & H_1(N) \ar[r,"\varphi"] \ar[d,isom] & H_1(X)\oplus H_1(Y) \ar[r,"\psi"] \ar[d,isom] &
H_1(\mathcal{M}) \ar[r] \ar[d,isom] & \cdots \\
& \mathbb{Z}^{4h} & \mathbb{Z}^{2h+1}\oplus H_1(Y) & \mathbb{Z}^g
\end{tikzcd}
\end{center}
Moreover, since $Y$ has two boundary components, one of genus $g$ and the other of genus $2h$,
we know that $H_1(Y)$ has rank at least $g+2h$.
Thus, in total, $H_1(X)\oplus H_1(Y)$ has rank at least $g+4h+1$.
But we also know that $\image(\psi)$ is free abelian with rank at most $g$,
so exactness tells us that $\image(\varphi)$ must have rank at least $4h+1$.
This is impossible, since $H_1(N)$ only has rank $4h$.

Thus, we obtain the desired conclusion that $\partial D$ is a non-separating curve on $\partial\mathcal{M}$.
This implies that $\mathcal{M}'$ has exactly one boundary component,
and that this boundary component has genus $g-1$.
To complete the proof, we just need to show that $\mathcal{M}'$ is interesting by
verifying that $H_1(\mathcal{M}')\simeq \mathbb{Z}^{g-1}$.

One way to do this is to examine the Mayer-Vietoris sequence for subspaces $X$ and $Y$ constructed as follows:
\begin{itemize}
\item Let $X$ be a subspace of $\mathcal{M}$ homeomorphic to $\mathcal{M}'$.
\item Let $Y$ be a $3$-ball neighbourhood of the disc $D$.
\end{itemize}
We can choose $X$ and $Y$ so that their interiors together cover $\mathcal{M}$,
and the intersection $N:=X\cap Y$ is a disjoint pair of $3$-balls.
This gives the following exact sequence:
\begin{center}
\begin{tikzcd}[column sep=small, row sep=tiny,
ar symbol/.style = {draw=none,"\textstyle#1" description,sloped},
isom/.style = {ar symbol={\simeq}},]
\cdots \ar[r] & H_1(N) \ar[r,"\varphi_1"] \ar[d,isom] & H_1(X)\oplus H_1(Y) \ar[r,"\psi_1"] \ar[d,isom] &
H_1(\mathcal{M}) \ar[r,"\partial"] \ar[d,isom] & H_0(N) \ar[r,"\varphi_0"] \ar[d,isom] &
H_0(X)\oplus H_0(Y) \ar[r,"\psi_0"] \ar[d,isom] & H_0(\mathcal{M}) \ar[r] \ar[d,isom] & 0 \\
& 0 & H_1(\mathcal{M}') & \mathbb{Z}^g & \mathbb{Z}^2 & \mathbb{Z}^2 & \mathbb{Z}
\end{tikzcd}
\end{center}
By exactness, we know that:
\begin{itemize}
\item $H_1(\mathcal{M}')\simeq \ker(\partial)$; and
\item $\image(\psi_0)$, $\image(\varphi_0)$ and $\image(\partial)$ all have rank $1$.
\end{itemize}
Thus, we see that $H_1(\mathcal{M}')\simeq \mathbb{Z}^{g-1}$, as required.
\end{proof}

The last two things that we rely on are algorithms to recognise the $3$-sphere~\cite{JacoRubinstein2003,Rubinstein1995,Thompson1994} and the $3$-ball
($3$-ball recognition is a well-known variant of $3$-sphere recognition).
Implementations for both algorithms are available in \Regina~\cite{Burton2013Regina,Regina}.

The following handlebody recognition algorithm is now also available in \Regina~\cite{Regina}:

\begin{algorithm}[Handlebody recognition]\label{algm:handlebody}
To test whether a bounded triangulation $\mathcal{T}$ is a handlebody, and if so determine its genus:
\begin{enumerate}[nosep, label={(\arabic*)}]
\item\label{item:handlebodyBasic} Check that $\mathcal{T}$ is
connected, orientable, and has exactly one boundary component.
If $\mathcal{T}$ fails to satisfy any of these conditions, then terminate and return $-1$ (indicating that $\mathcal{T}$ is not a handlebody).
\item\label{item:handlebodyHomology} Compute the genus $g$ of the boundary of $\mathcal{T}$.
	\begin{itemize}[nosep]
	\item In the case that $g=0$, check whether $\mathcal{T}$ is a $3$-ball.
	If it is, terminate and return $0$ (indicating that $\mathcal{T}$ is a genus-$0$ handlebody);
	otherwise, terminate and return $-1$.
	\item In the case that $g>0$, check whether the first homology of $\mathcal{T}$ is $\mathbb{Z}^g$.
	If it is not, terminate and return $-1$.
	\end{itemize}
\item\label{item:handlebodyLoop} Create a list $\mathcal{L}$ of triangulations to process,
which initially contains $\mathcal{T}$ (and nothing else).
While $\mathcal{L}$ is non-empty:
	\begin{enumerate}[nosep, label={(\roman*)}]
	\item\label{item:beginLoop} Let $\mathcal{F}$ be the first triangulation that appears in $\mathcal{L}$.
	Remove $\mathcal{F}$ from $\mathcal{L}$.
	\item Find a non-trivial normal $2$-sphere or disc $S$ in $\mathcal{F}$.
	If no such surface exists, then terminate and return $-1$.
	\item Crush $S$. For each component $\mathcal{C}$ of the triangulation $\mathcal{R}$ that results from crushing:
		\begin{itemize}[nosep]
		\item If $\mathcal{C}$ is closed, check whether $\mathcal{C}$ is a $3$-sphere.
		If it is, discard $\mathcal{C}$ and move on to the next component of $\mathcal{R}$;
		otherwise, terminate and return $-1$.
		\item If $\mathcal{C}$ has $2$-sphere boundary, check whether $\mathcal{C}$ is a $3$-ball.
		If it is, discard $\mathcal{C}$ and move on to the next component of $\mathcal{R}$;
		otherwise, terminate and return $-1$.
		\item In any other case, add $\mathcal{C}$ to the list $\mathcal{L}$,
		and move on to the next component of $\mathcal{R}$.
		\end{itemize}
	\end{enumerate}
\item Once there are no more triangulations in $\mathcal{L}$, terminate and return $g$ (indicating that $\mathcal{T}$ is a genus-$g$ handlebody).
\end{enumerate}
\end{algorithm}

\begin{theorem}[Correctness]\label{thm:handlebody}
Algorithm~\ref{algm:handlebody} correctly determines whether a given bounded triangulation is a handlebody.
\end{theorem}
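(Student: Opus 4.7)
The plan is to verify correctness by strong induction on $\lvert\mathcal{T}\rvert$, after separately dispatching termination.

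For termination, I would note that each pass through the main loop removes a triangulation $\mathcal{F}$ from $\mathcal{L}$ and replaces it by the components of the triangulation obtained from crushing a non-trivial normal surface in $\mathcal{F}$; because crushing such a surface strictly decreases the tetrahedron count, the multiset of sizes in $\mathcal{L}$ strictly decreases under a well-founded order, forcing the loop to halt.

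Correctness of steps~\ref{item:handlebodyBasic} and~\ref{item:handlebodyHomology} is routine: every handlebody is connected, orientable, has one boundary component and first homology $\mathbb{Z}^g$, and a handlebody of genus $0$ is precisely a $3$-ball. This also guarantees that whenever the algorithm enters step~\ref{item:handlebodyLoop}, the triangulation $\mathcal{T}$ is interesting in the sense of Proposition~\ref{prop:crushHandlebody}.

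The heart of the argument is step~\ref{item:handlebodyLoop}. Fix an interesting triangulation $\mathcal{F}$ popped from $\mathcal{L}$. If $\mathcal{F}$ is a handlebody, then its genus is at least $1$, so $\mathcal{F}$ is irreducible with compressible boundary; Theorem~\ref{thm:compressingDisc} then forces a vertex normal essential compressing disc to exist, which is in particular a non-trivial normal disc. Hence failing to find any non-trivial normal sphere or disc correctly witnesses that $\mathcal{F}$ is not a handlebody. When such a surface $S$ is found, Proposition~\ref{prop:crushHandlebody} tells us that each component $\mathcal{C}$ of the crushed triangulation is either closed or interesting, and that $\mathcal{F}$ is a handlebody if and only if every $\mathcal{C}$ is a $3$-sphere or a handlebody. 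I would then verify that the three case branches inside step~\ref{item:handlebodyLoop}---closed components handled by $3$-sphere recognition, sphere-boundary components (which are interesting with $g=0$, hence handlebodies iff $3$-balls) handled by $3$-ball recognition, and all others pushed back onto $\mathcal{L}$---exhaust the possibilities and correctly match this equivalence, invoking the induction hypothesis for the remaining positive-genus interesting components (each strictly smaller than $\mathcal{F}$).

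The main obstacle, as I see it, is not any individual step but the global bookkeeping: one must carefully check that the ``interesting'' invariant is preserved throughout the recursion (ensured by Proposition~\ref{prop:crushHandlebody}, once we start from an interesting triangulation) and that the three component-handling branches form a mutually exclusive exhaustion. Once these points are confirmed, the correctness of both the \True\ and \False\ outputs follows immediately from the equivalence in Proposition~\ref{prop:crushHandlebody} together with the correctness of the auxiliary $3$-sphere and $3$-ball recognition subroutines.
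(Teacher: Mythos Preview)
Your proposal is correct and uses the same key ingredients as the paper---termination via the strict decrease in tetrahedron count under crushing, Theorem~\ref{thm:compressingDisc} to guarantee a non-trivial normal disc in any positive-genus handlebody, and Proposition~\ref{prop:crushHandlebody} to control the components after crushing. The only difference is organisational: you frame the argument as strong induction on $\lvert\mathcal{T}\rvert$ (implicitly treating the list-processing loop as a recursion on components), whereas the paper phrases it as maintaining a loop invariant on $\mathcal{L}$, namely that every triangulation in $\mathcal{L}$ is interesting and that $\mathcal{T}$ is a handlebody iff every member of $\mathcal{L}$ is; these two framings are standard equivalents and neither buys anything the other does not.
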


\begin{proof}
Step~\ref{item:handlebodyHomology} eliminates $3$-manifolds with $2$-sphere boundary,
and $3$-manifolds that are not interesting.
Thus, throughout this proof, we can safely assume that the initial triangulation $\mathcal{T}$
represents an interesting $3$-manifold whose boundary surface has positive genus.

We begin by proving that the algorithm terminates.
It suffices to show that the loop in step~\ref{item:handlebodyLoop}
does not continue indefinitely.
Indeed, each time we remove a triangulation $\mathcal{F}$ from the list $\mathcal{L}$,
we either crush a non-trivial normal surface $S$,
or we terminate because there is no suitable surface $S$.
Crushing $S$ is guaranteed to reduce the number of tetrahedra,
so even if we add some triangulations back into the list $\mathcal{L}$,
the total number of tetrahedra in this list must decrease.
This number cannot decrease indefinitely, so the algorithm must terminate.

We now prove that the algorithm correctly determines whether the input triangulation $\mathcal{T}$ is a handlebody.
The key is the following invariant, which is preserved every time we
make a full pass through the loop in step~\ref{item:handlebodyLoop}:
every triangulation in $\mathcal{L}$ is interesting,
and $\mathcal{T}$ is a handlebody with positive genus if and only if
every triangulation in $\mathcal{L}$ is actually a handlebody with positive genus.

To prove that this invariant is indeed preserved, we first consider the case where
the triangulation $\mathcal{F}$ that we remove from $\mathcal{L}$ is a handlebody with positive genus.
In this case, Theorem~\ref{thm:compressionDisc} guarantees that
we will find a non-trivial normal $2$-sphere or disc $S$ in $\mathcal{F}$.
Then, by Proposition~\ref{prop:crushHandlebody}, crushing $S$ yields
a new triangulation whose components are all either $3$-spheres or handlebodies.
We discard all the $3$-spheres and $3$-balls, leaving only handlebodies with positive genus.
This suffices to show that the invariant is always preserved when
the initial triangulation $\mathcal{T}$ is a handlebody.

Suppose now that we remove a triangulation $\mathcal{F}$ from $\mathcal{L}$
that is not a handlebody.
We need to check that the invariant is preserved if we pass through the loop
without terminating and returning $-1$.
So, we can assume that we crush a non-trivial normal $2$-sphere or disc in $\mathcal{F}$,
yielding a new triangulation $\mathcal{F}'$.
We can also assume that all the closed components of $\mathcal{F}'$ are $3$-spheres,
and that all the components of $\mathcal{F}'$ with $2$-sphere boundary are $3$-balls;
we discard all such components.
Proposition~\ref{prop:crushHandlebody} tells us that the remaining components are all interesting,
and that at least one of these remaining components is not a handlebody;
we add all of these interesting components back into $\mathcal{L}$,
thus ensuring that $\mathcal{L}$ still contains at least one triangulation that is not a handlebody.
This shows that the invariant is always preserved when
the initial triangulation $\mathcal{T}$ is not a handlebody.

Altogether, we see that if $\mathcal{T}$ \emph{is} a handlebody,
then the list $\mathcal{L}$ must eventually become empty,
in which case we terminate and return the genus $g$ computed in step~\ref{item:handlebodyHomology};
this correctly indicates that $\mathcal{T}$ is a handlebody of genus $g$.
On the other hand, if $\mathcal{T}$ is \emph{not} a handlebody, then $\mathcal{L}$ can never become empty,
so at some point we must terminate and correctly return $-1$.
\end{proof}

\subsection{Detecting Seifert fibre edges}\label{subsec:edgeSeifert}

To find counterexamples to Conjecture~\ref{conj:fibre},
we need to be able to take an edge $e$ in a one-vertex triangulation of a small Seifert fibre space,
and determine whether $e$ is a Seifert fibre edge.
Since this is not easy to test conclusively, and since our main goal is to
find a triangulation that definitely has no Seifert fibre edges,
we use an algorithm (see Algorithm~\ref{algm:seifert}) that only gives conclusive answers in one direction:
certifying that an edge is \emph{not} isotopic to a Seifert fibre.

Throughout this section, let $\mathcal{T}$ be a one-vertex triangulation of
an irreducible small Seifert fibre space that is neither a lens space nor a prism manifold,
and fix an edge $e$ of $\mathcal{T}$.
(Recall from our discussion in Section~\ref{subsec:sfs} that $\mathcal{T}$ must be
fibred over $S^2$ with three exceptional fibres, and that this is the unique Seifert fibration on $\mathcal{T}$.)
The first step of our algorithm is to build a triangulation $\mathcal{D}$ of
the $3$-manifold obtained by deleting a small solid torus neighbourhood of $e$.
From here, our strategy is to take a list of normal surfaces that
are guaranteed to occur in $\mathcal{D}$ if $e$ is a Seifert fibre edge,
and check whether all such surfaces occur in $\mathcal{D}$;
if any of these surfaces are missing, then we know for sure that $e$ is not a Seifert fibre edge.
This strategy appears to be effective as long as we use a sufficiently exhaustive list of surfaces.

The key theoretical result that we use is the following,
which is a consequence of Corollary~6.8 of~\cite{JacoTollefson1995},
due to Jaco and Tollefson:

\begin{proposition}\label{prop:essAnnTor}
Let $\mathcal{F}$ be a bounded triangulation of an orientable, irreducible and $\partial$-irreducible Seifert fibre space.
Call a surface in $\mathcal{F}$ \textbf{good} if it is essential,
and is either a vertex normal surface or the double of a vertex normal surface.
\begin{itemize}[nosep]
\item If $\mathcal{F}$ has a vertical essential annulus, then it has a good vertical annulus.
\item If $\mathcal{F}$ has a vertical essential torus, then it has a good vertical torus.
Moreover, if $\mathcal{F}$ has more than one vertical essential torus (up to isotopy),
then it has at least two good vertical tori.
\end{itemize}
\end{proposition}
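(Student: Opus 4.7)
The plan is to deduce the proposition directly from Corollary~6.8 of~\cite{JacoTollefson1995}. First, I verify that $\mathcal{F}$ fits the hypotheses of that corollary: it is a triangulation of an orientable, compact, irreducible, $\partial$-irreducible Seifert fibre space, which matches exactly the setting considered by Jaco and Tollefson. Their standard approach is to enumerate normal representatives of vertical essential annuli and tori by first normalising an arbitrary vertical essential surface, then expressing the normalisation as a Haken sum of vertex normal pieces, and finally isolating a summand (or, in the one-sided case, the double of a summand) that inherits essentialness.

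To establish the first claim, I would take a vertical essential annulus $V$ guaranteed by the hypothesis, normalise it, and run the Haken-sum argument. The main technical point is verticality. Since the original $V$ is vertical, the fibred structure of $\mathcal{F}$ should guarantee that at least one summand in the Haken decomposition remains (up to isotopy) a vertical annulus---this is where the classification recalled in Section~\ref{subsec:sfs} is essential, since it tells us that every incompressible $\partial$-incompressible annulus in $\mathcal{F}$ is isotopic either to a vertical or to a horizontal surface, and the horizontal alternative can be excluded by comparing the homological data of the summand with that of $V$. I expect the hard part here to be this verticality bookkeeping: essentialness is preserved by the Jaco--Tollefson machinery more or less by design, but verticality must be tracked explicitly through the normalisation and summation steps. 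The torus case runs in parallel, using the same outline with $V$ replaced by a vertical essential torus.

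For the ``two tori'' clause, the plan is to apply the argument above separately to two non-isotopic vertical essential tori $V_1$ and $V_2$, producing good vertical tori $G_1$ and $G_2$. The remaining obstacle is certifying that $G_1$ and $G_2$ are themselves non-isotopic. The cleanest way to do this is by contrapositive: if every good vertical torus in $\mathcal{F}$ were isotopic to a single surface $G$, then both $V_1$ and $V_2$ would decompose (via Haken summation with vertex normal summands and possibly vertex-linking tori) into pieces all isotopic to $G$, forcing $[V_1]=[V_2]$ and contradicting their non-isotopy. This last step is the most delicate, as it requires a careful accounting of how isotopy classes interact with Haken sums in the vertical setting; I would expect to lean on the fact (also from~\cite{JacoTollefson1995}) that vertex normal surfaces sharing an isotopy class are highly constrained in their normal coordinates.
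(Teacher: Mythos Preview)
Your approach is the same as the paper's: apply Corollary~6.8 of Jaco--Tollefson to write a multiple $kS$ of (a normal representative of) the given vertical essential surface as a Haken sum of good annuli and tori, extract a vertical summand, and for the final clause argue by contrapositive that if only one good vertical torus $V$ exists then every vertical essential torus must be isotopic to $V$. The paper's entire proof is four sentences, so you are over-anticipating difficulty: verticality of the summands is treated as an immediate observation rather than tracked through any homological bookkeeping, and the two-tori contrapositive is a one-liner (if all good tori in the decomposition of $kA$ are isotopic to $V$, then so is $A$), requiring none of the vertex-linking tori or normal-coordinate constraints you mention.
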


\begin{proof}
First, suppose $\mathcal{F}$ has a vertical essential annulus $A$.
Corollary~6.8 of~\cite{JacoTollefson1995} tells us that $A$ is isotopic to
a normal annulus $S$ such that for some $k\geqslant1$, the surface $kS$ is a sum of good annuli and good tori.
Observe that at least one of the summands must be a good vertical annulus.

Suppose now that $\mathcal{F}$ has a vertical essential torus $A$.
This time, Corollary~6.8 of \cite{JacoTollefson1995} tells us that $A$ is isotopic to
a normal torus $S$ such that for some $k\geqslant1$, the surface $kS$ is a sum of good tori.
Since $\mathcal{F}$ has non-empty boundary, these good torus summands must all be vertical.
Moreover, if $\mathcal{F}$ only contains one good vertical torus $V$, then sums of $V$ with itself can only produce multiple parallel copies of $V$;
in this case, $V$ must therefore be (up to isotopy) the only vertical essential torus in $\mathcal{F}$.
\end{proof}

We first discuss how to test whether $e$ is isotopic to an exceptional fibre,
since this is much easier than the regular case.
If $e$ \emph{is} isotopic to an exceptional fibre, then observe that the triangulation $\mathcal{D}$
is fibred over a disc with two exceptional fibres, which implies that $\mathcal{D}$
contains a vertical annulus $A$ that cuts $\mathcal{D}$ into a pair of solid tori.
Thus, if we can show that no such annulus $A$ exists, then we will have certified
that $e$ is not isotopic to an exceptional fibre.
This is the main idea behind the following algorithm (which is an important subroutine for Algorithm~\ref{algm:seifert}):

\begin{algorithm}\label{algm:cutTwoSolidTori}
Let $\mathcal{F}$ be a bounded triangulation of an orientable irreducible $3$-manifold
with one torus boundary component (and no other boundary components).
To test whether $\mathcal{F}$ is fibred over a disc with two exceptional fibres:
\begin{enumerate}[nosep, label={(\arabic*)}]
\item Create an empty list $\mathcal{A}$.
\item\label{item:excFibEnum}
Enumerate all vertex normal surfaces in $\mathcal{F}$.
For each such surface $S$:
	\begin{itemize}[nosep]
	\item If $S$ is an essential compression disc, terminate and return \False.
	\item If $S$ is an annulus, add $S$ to $\mathcal{A}$.
	\item If $2S$ is an annulus, add $2S$ to $\mathcal{A}$.
	\end{itemize}
\item\label{item:excFibAnnulus} For each annulus $S$ in $\mathcal{A}$, check whether cutting along $S$
decomposes $\mathcal{F}$ into a pair of solid tori.
If there is an annulus that gives such a decomposition, terminate and return \Unknown;
otherwise, terminate and return \False.
\end{enumerate}
\end{algorithm}

\begin{proposition}\label{prop:cutTwoSolidTori}
If Algorithm~\ref{algm:cutTwoSolidTori} returns \False, then the input triangulation $\mathcal{F}$
cannot be fibred over a disc with two exceptional fibres.
\end{proposition}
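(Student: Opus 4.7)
The plan is to argue by contrapositive: assume $\mathcal{F}$ is fibred over a disc with two exceptional fibres, and show that the algorithm cannot return \False. First I would verify the hypotheses needed to invoke Proposition~\ref{prop:essAnnTor}. The input $\mathcal{F}$ is orientable and irreducible by assumption, and this Seifert fibration makes $\mathcal{F}$ additionally $\partial$-irreducible, since a compact orientable Seifert fibre space fibred over a disc with $n$ exceptional fibres has incompressible torus boundary precisely when $n\geqslant 2$. In particular, no essential compressing disc exists, so the algorithm cannot exit through the \False branch in step~(2).

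Next I would exhibit a concrete vertical essential annulus. Lifting an essential arc in the base disc that separates the two cone points gives a vertical annulus $A$ in $\mathcal{F}$ whose complement has two components, each fibred over a disc with exactly one exceptional fibre, and hence each a fibred solid torus; from this description one can check that $A$ is incompressible, $\partial$-incompressible, and not boundary parallel, so $A$ is essential. Proposition~\ref{prop:essAnnTor} then produces a \emph{good} vertical essential annulus $S$ in $\mathcal{F}$; by construction $S$ is either a vertex normal surface or the double of one, so step~(2) of the algorithm adds $S$ to the list $\mathcal{A}$.

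The final task is to show that cutting along this good annulus $S$ in step~\ref{item:excFibAnnulus} decomposes $\mathcal{F}$ into a pair of solid tori. I expect this to be the main obstacle, because \emph{a priori} the annulus $S$ produced by Proposition~\ref{prop:essAnnTor} need not coincide with the annulus $A$ constructed by hand. The key observation is that any vertical annulus projects to a properly embedded arc in the base disc, and essentiality of $S$ forces this arc to be essential in the orbifold sense; in a disc with exactly two cone points, every such arc must separate the two cone points. Hence each component of the complement of $S$ is again fibred over a disc with a single exceptional fibre, so each is a solid torus, and the algorithm finds the required decomposition and returns \Unknown rather than \False.
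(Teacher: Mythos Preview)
Your argument is correct and follows essentially the same route as the paper: assume $\mathcal{F}$ is fibred over a disc with two exceptional fibres, note that the boundary is incompressible so step~(2) cannot return \False, and then invoke Proposition~\ref{prop:essAnnTor} to obtain a good vertical annulus that the algorithm will find in step~\ref{item:excFibAnnulus}. The only cosmetic difference is that where you explicitly argue (via projection to the base orbifold) that any vertical essential annulus must separate the two cone points and hence cut $\mathcal{F}$ into two solid tori, the paper simply asserts that up to isotopy there is a \emph{unique} essential annulus in $\mathcal{F}$, namely the separating vertical one---your version just unpacks this uniqueness claim.
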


\begin{proof}
Suppose the input triangulation $\mathcal{F}$ is fibred over a disc with two exceptional fibres.
We need to prove that the algorithm will not return \False.
Since $\mathcal{F}$ is $\partial$-irreducible, the algorithm cannot return \False\ in step~\ref{item:excFibEnum}.
Thus, it suffices to show that the algorithm will find an annulus that cuts $\mathcal{F}$
into a pair of solid tori, and hence return \Unknown\ in step~\ref{item:excFibAnnulus}.
For this, observe that $\mathcal{F}$ has (up to isotopy) exactly one
vertical essential annulus $A$, and that $A$ has exactly the required property;
Proposition~\ref{prop:essAnnTor} then tells us that $A$ appears as either a vertex normal surface or the double of a vertex normal surface,
so the algorithm will find $A$ amongst the normal surfaces that it generates in step~\ref{item:excFibEnum}.
\end{proof}

We now discuss how to test whether $e$ is isotopic to a regular fibre.
For this, we rely on the following observation:

\begin{observation}\label{obs:regFibre}
Suppose $e$ is isotopic to a regular fibre.
In this case, the triangulation $\mathcal{D}$ is fibred over a disc with three exceptional fibres, which has the following implications:
\begin{enumerate}[nosep,label={(\alph*)}]
\item\label{regFibre:annuli}
Up to isotopy, $\mathcal{D}$ contains exactly three vertical essential annuli.
Each such annulus separates one exceptional fibre from the other two,
and therefore cuts $\mathcal{D}$ into two pieces $\mathcal{P}_0$ and $\mathcal{P}_1$,
where $\mathcal{P}_0$ is a solid torus and $\mathcal{P}_1$ is fibred over a disc with two exceptional fibres.
\item\label{regFibre:tori}
Up to isotopy, $\mathcal{D}$ contains exactly three vertical essential tori.
Each such torus encloses two of the three exceptional fibres, and therefore cuts $\mathcal{D}$ into two pieces $\mathcal{Q}_0$ and $\mathcal{Q}_1$,
where $\mathcal{Q}_0$ is fibred over an annulus with one exceptional fibre and $\mathcal{Q}_1$ is fibred over a disc with two exceptional fibres.
\end{enumerate}
\end{observation}

From part~\ref{regFibre:tori} of Observation~\ref{obs:regFibre}, we see that to rule out the existence of vertical essential tori in $\mathcal{D}$,
we need to be able to recognise $3$-manifolds $\mathcal{M}$ that are fibred over an annulus with one exceptional fibre.
For this, we rely on the fact that such a $3$-manifold $\mathcal{M}$ has a vertical essential annulus that cuts $\mathcal{M}$ into a single solid torus.
The following algorithm (which is another important subroutine for Algorithm~\ref{algm:seifert}) proceeds by attempting to rule out the existence of such an annulus:

\begin{algorithm}\label{algm:cutOneSolidTorus}
Let $\mathcal{F}$ be a bounded triangulation of an orientable irreducible $3$-manifold
with two torus boundary components (and no other boundary components).
To test whether $\mathcal{F}$ is fibred over an annulus with one exceptional fibre:
\begin{enumerate}[nosep, label={(\arabic*)}]
\item Create an empty list $\mathcal{A}$.
\item\label{item:cutOneEnum}
Enumerate all vertex normal surfaces in $\mathcal{F}$.
For each such surface $S$:
	\begin{itemize}[nosep]
	\item If $S$ is an essential compression disc, terminate and return \False.
	\item If $S$ is an annulus, add $S$ to $\mathcal{A}$.
	\item If $2S$ is an annulus, add $2S$ to $\mathcal{A}$.
	\end{itemize}
\item\label{item:cutOneAnnulus} For each annulus $S$ in $\mathcal{A}$, check whether cutting along $S$
decomposes $\mathcal{F}$ into a single solid torus.
If there is an annulus that gives such a decomposition, terminate and return \Unknown;
otherwise, terminate and return \False.
\end{enumerate}
\end{algorithm}

\begin{proposition}\label{prop:cutOneSolidTorus}
If Algorithm~\ref{algm:cutOneSolidTorus} returns \False,
then the input triangulation $\mathcal{F}$ cannot be fibred over an annulus with one exceptional fibre.
\end{proposition}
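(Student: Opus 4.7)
The plan is to mirror the proof of Proposition~\ref{prop:cutTwoSolidTori}, since Algorithm~\ref{algm:cutOneSolidTorus} differs from Algorithm~\ref{algm:cutTwoSolidTori} only in the shape of the decomposition it is trying to witness. Assuming the contrapositive --- that $\mathcal{F}$ really is Seifert fibred over an annulus $B$ with one exceptional fibre --- I will argue that neither of the two ways in which the algorithm can return \False\ actually occurs.

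First I would verify that $\mathcal{F}$ has incompressible boundary. The base orbifold has negative orbifold Euler characteristic (since the cone point has multiplicity at least $2$), and a standard fact about Seifert fibre spaces then tells us that the regular fibre on each boundary torus has infinite order in $\pi_1(\mathcal{F})$ and that each boundary torus injects on $\pi_1$. Hence no essential compressing discs exist, so the enumeration step cannot terminate with \False.

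Next I would identify the essential annulus that witnesses the desired decomposition. Pick an arc $\gamma$ in the base annulus $B$ that runs from one boundary circle to the other and avoids the cone point; its preimage $A$ under the Seifert projection is a vertical annulus. Cutting $B$ along $\gamma$ unfolds it into a disc containing the single cone point, whose preimage is a fibred solid torus, so cutting $\mathcal{F}$ along $A$ yields exactly the decomposition that step~\ref{item:cutOneAnnulus} is looking for. Using the characterisation from section~\ref{subsec:sfs} of the only ways in which a connected two-sided vertical surface can fail to be incompressible or $\partial$-incompressible --- together with a quick check that $A$ is not boundary parallel --- I would conclude that $A$ is essential, so Proposition~\ref{prop:essAnnTor} guarantees that some good vertical annulus exists in $\mathcal{F}$ and hence that the list $\mathcal{A}$ is non-empty.

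The main obstacle I expect is bridging the gap between the existence of \emph{some} good vertical annulus (supplied by Proposition~\ref{prop:essAnnTor}) and the appearance in $\mathcal{A}$ of an annulus that actually decomposes $\mathcal{F}$ into a single solid torus: a priori, $\mathcal{F}$ could contain other essential vertical annuli that yield different decompositions (for instance, one with both boundary circles on the same torus of $\partial\mathcal{F}$, which would cut off a fibred solid torus but leave a $T^2 \times I$ piece). To close this gap, I would classify essential arcs in the base orbifold up to isotopy and argue that $A$ is (up to isotopy) the only vertical essential annulus in $\mathcal{F}$; step~\ref{item:cutOneAnnulus} must then eventually recognise an annulus giving the single-solid-torus decomposition and return \Unknown, as required.
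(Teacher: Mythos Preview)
Your proposal is correct and follows essentially the same route as the paper: argue by contrapositive, note that $\mathcal{F}$ has incompressible boundary so no essential compressing disc is found, observe that the vertical annulus $A$ cutting $\mathcal{F}$ into a single solid torus is (up to isotopy) the \emph{only} vertical essential annulus, and then invoke Proposition~\ref{prop:essAnnTor} to conclude that $A$ appears in the list $\mathcal{A}$. The paper simply asserts this uniqueness as a fact in the discussion preceding Algorithm~\ref{algm:cutOneSolidTorus}, whereas you explicitly flag it as the key step and sketch a verification via essential arcs in the base orbifold --- which is exactly the right place to look.
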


\begin{proof}
Suppose the input triangulation $\mathcal{F}$ is fibred over an annulus with one exceptional fibre.
We need to prove that the algorithm will not return \False;
the proof is almost identical to the proof of Proposition~\ref{prop:cutTwoSolidTori}.
Since $\mathcal{F}$ is $\partial$-irreducible, the algorithm cannot return \False\ in step~\ref{item:cutOneEnum}.
Thus, it suffices to show that the algorithm will find an annulus that cuts $\mathcal{F}$
into a single solid torus, and hence return \Unknown\ in step~\ref{item:cutOneAnnulus}.
For this, observe that $\mathcal{F}$ has (up to isotopy) exactly one vertical essential annulus $A$, and that $A$ has exactly the required property;
Proposition~\ref{prop:essAnnTor} then tells us that $A$ appears as either a vertex normal surface or the double of a vertex normal surface,
so the algorithm will find $A$ amongst the normal surfaces that it generates in step~\ref{item:cutOneEnum}.
\end{proof}

We are finally ready to present our algorithm for testing whether $e$ is a Seifert fibre edge (Algorithm~\ref{algm:seifert}).
We have already mentioned that the difficult case is testing whether $e$ is isotopic to a regular fibre,
and that in this case $\mathcal{D}$ must have several vertical essential annuli and vertical essential tori.
With this in mind, the overall strategy is similar to the previous two algorithms:
use Proposition~\ref{prop:essAnnTor} to try to rule out the existence of these vertical surfaces.

\begin{algorithm}\label{algm:seifert}
Let $\mathcal{T}$ be a one-vertex triangulation of an irreducible small Seifert fibre space that is neither a lens space nor a prism manifold.
To test whether an edge $e$ of $\mathcal{T}$ is a Seifert fibre edge:
\begin{enumerate}[nosep, label={(\arabic*)}]
\item\label{item:drill}
Build a bounded triangulation $\mathcal{D}$ of the $3$-manifold obtained by
deleting a small solid torus neighbourhood of $e$.
\item\label{item:mainLists}
Create empty lists $\mathcal{A}$, $\mathcal{B}$ and $\mathcal{L}$.
\item\label{item:mainEnumerate} Enumerate all vertex normal surfaces in $\mathcal{D}$.
For each such surface $S$:
	\begin{itemize}[nosep]
	\item If $S$ is an essential compression disc, terminate and return \False.
	\item If $S$ is an annulus, add $S$ to $\mathcal{A}$.
	\item If $2S$ is an annulus, add $2S$ to $\mathcal{A}$.
	\item If $S$ is a torus, add $S$ to $\mathcal{B}$.
	\item If $2S$ is a torus, add $2S$ to $\mathcal{B}$.
	\end{itemize}
\item\label{item:emptyAnnulusList} If the list $\mathcal{A}$ is still empty, terminate and return \False.
\item\label{item:cutAnnulus} For each annulus $S$ in $\mathcal{A}$:
	\begin{enumerate}[nosep, label={(\roman*)}]
	\item Cut along $S$, and check whether this yields two components $\mathcal{C}_0$ and $\mathcal{C}_1$.
	If not, move on to the next annulus in $\mathcal{A}$.
	\item Check whether $\mathcal{C}_0$ and $\mathcal{C}_1$ are solid tori.
		\begin{itemize}[nosep]
		\item If they are both solid tori, terminate and return \Unknown.
		\item If $\mathcal{C}_0$ is a solid torus but $\mathcal{C}_1$ is not,
		add $\mathcal{C}_1$ to $\mathcal{L}$.
		\item If $\mathcal{C}_1$ is a solid torus but $\mathcal{C}_0$ is not,
		add $\mathcal{C}_0$ to $\mathcal{L}$.
		\end{itemize}
	\end{enumerate}
\item\label{item:notEnoughTori} If the list $\mathcal{B}$ contains fewer than two tori,
terminate and return \False.
\item\label{item:cutTwoSolidTori} Run Algorithm~\ref{algm:cutTwoSolidTori} on each triangulation in the list $\mathcal{L}$;
if the output is \False\ for \emph{every} such triangulation, then terminate and return \False.
\item Initialise an integer variable $n$ to be $0$.
\item\label{item:cutTori} For each torus $S$ in $\mathcal{B}$:
	\begin{enumerate}[nosep, label={(\roman*)}]
	\item Cut along $S$, and check whether this yields two components $\mathcal{C}_0$ and $\mathcal{C}_1$.
	If not, move on to the next torus in $\mathcal{B}$.
	\item Count the number $b$ of boundary components in $\mathcal{C}_0$.
		\begin{itemize}[nosep]
		\item In the case where $b=1$, check that:
			\begin{itemize}[nosep]
			\item $\mathcal{C}_1$ has $2$ boundary components;
			\item running Algorithm~\ref{algm:cutTwoSolidTori} on $\mathcal{C}_0$
			does not result in output \False; and
			\item running Algorithm~\ref{algm:cutOneSolidTorus} on $\mathcal{C}_1$
			does not result in output \False.
			\end{itemize}
		If these conditions are all satisfied, increase $n$ by $1$.
		\item In the case where $b=2$, check that:
			\begin{itemize}[nosep]
			\item $\mathcal{C}_1$ has $1$ boundary component;
			\item running Algorithm~\ref{algm:cutOneSolidTorus} on $\mathcal{C}_0$
			does not result in output \False; and
			\item running Algorithm~\ref{algm:cutTwoSolidTori} on $\mathcal{C}_1$
			does not result in output \False.
			\end{itemize}
		If these conditions are all satisfied, increase $n$ by $1$.
		\end{itemize}
	\end{enumerate}
\item\label{item:mainTori} If $n<2$, terminate and return \False;
otherwise, terminate and return \Unknown.
\end{enumerate}
\end{algorithm}

\begin{theorem}\label{thm:seifertAlgm}
If Algorithm~\ref{algm:seifert} returns \False,
then the input edge $e$ cannot be a Seifert fibre edge.
\end{theorem}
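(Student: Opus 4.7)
My plan is to argue the contrapositive: I assume that $e$ is isotopic to a Seifert fibre in the unique Seifert fibration of $\mathcal{T}$ (which by section~\ref{subsec:sfs} is fibred over $S^2$ with three exceptional fibres), and verify that none of the checks in Algorithm~\ref{algm:seifert} can force it to terminate with output \False. The argument naturally splits into two sub-cases according to whether the corresponding Seifert fibre is exceptional or regular. In both sub-cases, $\mathcal{D}$ triangulates a compact orientable Seifert fibre space with incompressible boundary, so $\mathcal{D}$ has no essential compressing disc and step~\ref{item:mainEnumerate} does not return \False.

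For the exceptional sub-case, $\mathcal{D}$ is fibred over a disc with two exceptional fibres. Up to isotopy there is a unique essential annulus in $\mathcal{D}$, namely a vertical annulus $A$ that cuts $\mathcal{D}$ into two solid tori. By Proposition~\ref{prop:essAnnTor}, $A$ appears as a vertex normal surface or the double of one, so $A\in\mathcal{A}$; this forces step~\ref{item:emptyAnnulusList} to pass and step~\ref{item:cutAnnulus} to return \Unknown when it encounters $A$.

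For the regular sub-case, $\mathcal{D}$ is fibred over a disc with three exceptional fibres. A short analysis of arcs and simple closed curves in the base disc (with three marked points) shows that $\mathcal{D}$ has, up to isotopy, exactly three vertical essential annuli and exactly three vertical essential tori; each such annulus cuts $\mathcal{D}$ into a solid torus and a piece fibred over a disc with two exceptional fibres, while each such torus cuts $\mathcal{D}$ into a piece fibred over a disc with two exceptional fibres (one boundary component) and a piece fibred over an annulus with one exceptional fibre (two boundary components). Applying Proposition~\ref{prop:essAnnTor} gives at least one good vertical annulus (so $\mathcal{A}$ is non-empty at step~\ref{item:emptyAnnulusList}) and, because there is more than one vertical essential torus up to isotopy, at least two good vertical tori (so step~\ref{item:notEnoughTori} passes). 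For step~\ref{item:cutTwoSolidTori}, cutting $\mathcal{D}$ along a good vertical annulus in step~\ref{item:cutAnnulus} adds the ``disc with two exceptional fibres'' piece to $\mathcal{L}$, and Proposition~\ref{prop:cutTwoSolidTori} prevents Algorithm~\ref{algm:cutTwoSolidTori} from returning \False on this entry. For step~\ref{item:cutTori}, each good vertical torus produces a pair $(\mathcal{C}_0,\mathcal{C}_1)$ whose boundary-component counts match one of the two branches ($b=1$ or $b=2$), and Propositions~\ref{prop:cutTwoSolidTori} and~\ref{prop:cutOneSolidTorus} together force both auxiliary-algorithm calls to avoid \False, so $n$ is incremented each time; having at least two good vertical tori then gives $n\geq2$, so step~\ref{item:mainTori} returns \Unknown.

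The main bookkeeping obstacle I expect is the regular sub-case: one must correctly pair the combinatorial boundary-component count in step~\ref{item:cutTori} with the Seifert fibration of each resulting piece, so that the correct auxiliary algorithm (\ref{algm:cutTwoSolidTori} or~\ref{algm:cutOneSolidTorus}) is invoked on the correct side. Once this case analysis is organised, the only substantive inputs are Propositions~\ref{prop:essAnnTor}, \ref{prop:cutTwoSolidTori} and~\ref{prop:cutOneSolidTorus}, which between them supply the existence of the needed good vertex normal surfaces and the non-\False behaviour of the auxiliary algorithms on precisely the Seifert-fibred pieces that arise.
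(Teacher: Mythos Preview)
Your proposal is correct and follows essentially the same approach as the paper's proof: both argue the contrapositive, split into the exceptional-fibre and regular-fibre sub-cases, and use Proposition~\ref{prop:essAnnTor} to produce the required good vertical annuli and tori, together with Propositions~\ref{prop:cutTwoSolidTori} and~\ref{prop:cutOneSolidTorus} to handle the auxiliary calls on the resulting pieces. The only minor difference is expository: the paper explicitly remarks that in the exceptional sub-case $\mathcal{D}$ has no essential tori (explaining why step~\ref{item:notEnoughTori} is placed after step~\ref{item:cutAnnulus}), whereas you simply note that the algorithm terminates with \Unknown\ at step~\ref{item:cutAnnulus} before ever reaching step~\ref{item:notEnoughTori}; both are fine.
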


\begin{proof}
We first note that if $e$ is isotopic to an exceptional fibre,
then the triangulation $\mathcal{D}$ will be fibred over a disc with two exceptional fibres.
In this case, we can use the same argument as in the proof of Proposition~\ref{prop:cutTwoSolidTori}
to show that Algorithm~\ref{algm:seifert} will return \Unknown\ in step~\ref{item:cutAnnulus}.
(Note that in this case, $\mathcal{D}$ has no vertical essential tori,
which is why we need to wait until step~\ref{item:notEnoughTori}
to check whether the list $\mathcal{B}$ contains at least two tori.)

Thus, for the rest of this proof, we assume that $e$ is isotopic to a regular fibre,
in which case $\mathcal{D}$ is fibred over a disc with three exceptional fibres.
This implies that $\mathcal{D}$ is $\partial$-irreducible, and hence that the algorithm will not return \False\ in step~\ref{item:mainEnumerate}.

To see that the algorithm will not return \False\ in steps~\ref{item:emptyAnnulusList}
and~\ref{item:cutTwoSolidTori}, we recall part~\ref{regFibre:annuli} of Observation~\ref{obs:regFibre}:
$\mathcal{D}$ has exactly three vertical essential annuli, each of which cuts $\mathcal{D}$ into
a solid torus piece $\mathcal{P}_0$ and a piece $\mathcal{P}_1$ that is fibred over a disc with two exceptional fibres.
By Proposition~\ref{prop:essAnnTor}, at least one of these vertical annuli
appears as either a vertex normal surface or the double of a vertex normal surface.
Thus, at some point in step~\ref{item:mainEnumerate}, we will add such an annulus $S$ to the list $\mathcal{A}$,
which implies that the algorithm will not return \False\ in step~\ref{item:emptyAnnulusList}.
Moreover, at some point in step~\ref{item:cutAnnulus}, we will cut along $S$,
and hence decompose $\mathcal{D}$ into the pieces $\mathcal{P}_0$ and $\mathcal{P}_1$ mentioned earlier.
The piece $\mathcal{P}_1$ will be added to the list $\mathcal{L}$,
which means that at some point in step~\ref{item:cutTwoSolidTori},
we will run Algorithm~\ref{algm:cutTwoSolidTori} on input $\mathcal{P}_1$;
by Proposition~\ref{prop:cutTwoSolidTori}, Algorithm~\ref{algm:cutTwoSolidTori}
cannot output \False\ in this case, so our main algorithm also cannot return \False\ at this point.

All that remains is to show that the algorithm will not return \False\ in
steps~\ref{item:notEnoughTori} and~\ref{item:mainTori}.
For this, we use part~\ref{regFibre:tori} of Observation~\ref{obs:regFibre}:
$\mathcal{D}$ has exactly three vertical essential tori, each of which cuts $\mathcal{D}$ into
a piece $\mathcal{Q}_0$ that is fibred over an annulus with one exceptional fibre and
a piece $\mathcal{Q}_1$ that is fibred over a disc with two exceptional fibres.
By Proposition~\ref{prop:essAnnTor}, at least two of these vertical tori appear as
either vertex normal surfaces or doubles of vertex normal surfaces.
Thus, at some point in step~\ref{item:mainEnumerate}, we will add two such tori $S_0$ and $S_1$ to the list $\mathcal{B}$,
which implies that the algorithm will not return \False\ in step~\ref{item:notEnoughTori}.
Moreover, for each $i\in\{0,1\}$, we will cut along $S_i$ at some point in step~\ref{item:cutTori},
and hence decompose $\mathcal{D}$ into the pieces $\mathcal{Q}_0$ and $\mathcal{Q}_1$ mentioned earlier.
We subsequently run Algorithm~\ref{algm:cutOneSolidTorus} on $\mathcal{Q}_0$ and
Algorithm~\ref{algm:cutTwoSolidTori} on $\mathcal{Q}_1$;
by Propositions~\ref{prop:cutOneSolidTorus} and~\ref{prop:cutTwoSolidTori}, respectively,
neither of these subroutines will output \False.
Thus, we increment the variable $n$ once for each $i\in\{0,1\}$,
which implies that our main algorithm cannot return \False\ in step~\ref{item:mainTori}.
\end{proof}

Implementations of
Algorithms~\ref{algm:cutTwoSolidTori},~\ref{algm:cutOneSolidTorus} and~\ref{algm:seifert}
are available at \url{https://github.com/AlexHe98/triang-counterex}.
The only aspect of these implementations that we have not yet mentioned is that
we include an alternative technique for testing whether a $3$-manifold is Seifert fibred in a particular way,
as this sometimes allows us to avoid the computationally expensive task of enumerating vertex normal surfaces.
Specifically, between steps~\ref{item:drill} and~\ref{item:mainLists} of Algorithm~\ref{algm:seifert},
our implementation attempts to detect a Seifert fibration for the triangulation $\mathcal{D}$ using combinatorial recognition;
that is, we perform 2-3 and 3-2 moves on $\mathcal{D}$ with the goal of obtaining a standard triangulation
of a Seifert fibre space that is recognised by \Regina's \texttt{BlockedSFS} class~\cite{Regina}.

\subsection{Tracking edges as we perform 2-3 and 3-2 moves}\label{subsec:pachner}

Suppose that in some triangulation $\mathcal{T}$, we already know
which edges are (for instance) core edges.
Recall from Section~\ref{subsec:elemMoves} that if we generate a new triangulation $\mathcal{T}'$ by
performing a 3-2 move about an edge $e$ of $\mathcal{T}$,
then the only difference between the $1$-skeletons of $\mathcal{T}$ and $\mathcal{T}'$
is that $\mathcal{T}'$ no longer contains the edge $e$.
Thus, in principle, we do not need to directly compute which edges of $\mathcal{T}'$ are core edges;
we can just recover this information from what we already know about $\mathcal{T}$.

In the other direction, if we generate a new triangulation $\mathcal{T}'$ by
performing a 2-3 move, then recall that the only difference between the $1$-skeletons
of $\mathcal{T}$ and $\mathcal{T}'$ is that $\mathcal{T}'$ contains a new edge $e$.
In this case, we need to check whether $e$ is a core edge,
but there should be no need to recompute this for the other edges.

In practice, the situation is complicated by the fact that performing
2-3 and 3-2 moves in \Regina\ could arbitrarily renumber the edges.
Our solution is to use a bespoke implementation of 2-3 and 3-2 moves that provides,
as part of the output, a description of how the edges are renumbered.
The source code is available at \url{https://github.com/AlexHe98/triang-counterex}.
This implementation---and more generally, this idea of tracking how
an elementary move renumbers the vertices, edges or faces of a triangulation---may have other applications.

\section{Removing bad edges using a targeted search}\label{sec:heuristics}

This section discusses the algorithm that we used to search for counterexamples to
Conjectures~\ref{conj:core},~\ref{conj:tunnel} and~\ref{conj:fibre}.
To describe this search algorithm, we need to treat core edges, tunnel edges and Seifert fibre edges in a unified way.
For this, we note that each of these types of edges is defined by a property that is invariant under ambient isotopy;
let $P$ denote any such property.
We will say that an edge $e$ in a one-vertex triangulation is \textbf{bad} if $e$ satisfies this property $P$, and \textbf{good} otherwise;
throughout the rest of this paper, whenever we speak of good and bad edges,
the specific choice of the property $P$ will either be unimportant or clear from context.

With this terminology, we can rephrase the task of finding one of the desired counterexamples as the task of finding a triangulation with no bad edges.
Our strategy for achieving this stems from the following observation, which we mentioned in Section~\ref{subsec:elemMoves}:
when we perform a 3-2 move about an edge $e$, the only effect that this has on the $1$-skeleton is to remove $e$.
This suggests a natural way to turn a triangulation with one or more bad edges into a triangulation with no bad edges:
find a sequence of 2-3 and 3-2 moves that allows us to remove each of the bad edges using a 3-2 move.
In other words, the idea is to search the Pachner graph for a triangulation with no bad edges.

As mentioned in Section~\ref{subsec:counterex}, it is crucial to perform this search in a targeted fashion (rather than performing a brute-force search).
Sections~\ref{subsec:defects} and~\ref{subsec:targEnum} give a high-level overview of our targeted search algorithm.
We then discuss various implementation details in
Sections~\ref{subsec:multi} and~\ref{subsec:otherImp}.

\subsection{Heuristics for removing bad edges}\label{subsec:defects}

To perform a targeted search for a triangulation with no bad edges, we require heuristics for measuring how far away we are from such a triangulation.
Our goal now is to motivate and define the heuristics that we use.

The most obvious quantity to measure is the number of bad edges;
certainly, it would seem reasonable for our search to avoid increasing this quantity.
Since 3-2 moves only change the $1$-skeleton by removing an edge, we see that such a move can never increase the number of bad edges.
However, 2-3 moves change the $1$-skeleton by \emph{creating} a new edge,
so if the new edge happens to be bad then we will have increased the number of bad edges;
for this reason, one of the critical subroutines in our search algorithm is testing whether the
edge introduced by a 2-3 move is bad, so that we know which 2-3 moves we should try to avoid.

This alone is not particularly helpful, since there are many triangulations with the same number of bad edges.
To motivate some more fine-grained heuristics, we reiterate that the only way we can reduce the number of bad edges
is to perform a 3-2 move about a bad edge.
Such a move is possible if and only if there exists a bad edge of degree three that actually meets three distinct tetrahedra.
This suggests that we should measure the following quantities for each bad edge:

\begin{definitions}
Let $e$ be an edge in a one-vertex triangulation.
Let $d(e)$ denote the degree of $e$, and let $n(e)$ denote the number of \emph{distinct} tetrahedra that meet $e$.
\begin{itemize}[nosep]
\item The \textbf{degree defect} of $e$, denoted $\delta(e)$, is given by $\lvert d(e)-3 \rvert$.
\item The \textbf{multiplicity defect} of $e$, denoted $\mu(e)$, is given by $d(e)-n(e)$.
\end{itemize}
\end{definitions}

Note that the degree defect and multiplicity defect are both always non-negative,
and that we can perform a 3-2 move about an edge $e$ if and only if
both of these defects are equal to zero.
Thus, to reduce the number of bad edges, it seems reasonable to greedily prioritise triangulations in which the bad edges have small degree defect and multiplicity defect.
This motivates the following definition:

\begin{definition}[Complexity of a triangulation]\label{def:complexity}
Let $\mathcal{T}$ be a one-vertex triangulation with at least one bad edge, and let $e_1,\ldots,e_k$ denote the bad edges in $\mathcal{T}$.
The \textbf{complexity} of $\mathcal{T}$ is given by the $4$-tuple
\[
\left(\;
k,\;
\max_{1\leqslant i\leqslant k}\mu(e_i),\;
\max_{1\leqslant i\leqslant k}\delta(e_i),\;
\lvert\mathcal{T}\rvert
\;\right).
\]
\end{definition}

Our search algorithm attempts to minimise the complexity of a triangulation
with respect to the \emph{lexicographical ordering}.
This is why the number $k$ of bad edges appears as the first entry:
our primary objective is to minimise this number.
Our secondary objective, as discussed above, is to reduce the
multiplicity defect and degree defect of the bad edges,
in the hope that this will allow us to remove these edges using 3-2 moves.
We take the maximum defects across all bad edges (as opposed to, say, the minimum)
because our ultimate goal is to remove all of these edges, not just one of them.

It is crucial that we prioritise reducing the multiplicity defect ahead of reducing the degree defect.
The reason for this is best illustrated by recounting our initial approach to this problem.
At first, our complexity did not involve the multiplicity defect at all.
With this na\"{i}ve notion of complexity, we found that the search could easily reach
a region of the Pachner graph where the bad edges had degree defect close to $0$;
however, the search would then get stuck enumerating lots of triangulations without ever
succeeding in reducing the degree defect to $0$.
We eventually realised that the search was getting trapped in a region of the Pachner graph where
the bad edges had multiplicity defect equal to $2$.
Placing a high priority on reducing the multiplicity defect
gives the search some impetus to avoid such regions.

Finally, all else being equal, it would be nice for our counterexamples to be as small as possible.
This is why we include the size of the triangulation as the final entry of the complexity.

\subsection{The targeted search algorithm}\label{subsec:targEnum}

\begin{algorithm}\label{algm:targEnum}
This algorithm takes the following inputs:
\begin{itemize}[nosep]
\item A one-vertex triangulation $\mathcal{T}$ with $n$ bad edges, where $n\geqslant1$.
\item A non-negative integer $x$
(the number of extra bad edges that we are allowed to use).
\end{itemize}
To search for a new triangulation with $n-1$ bad edges:
\begin{enumerate}[nosep, label={(\arabic*)}]
\item For each bad edge in $\mathcal{T}$, check whether it is possible to perform a 3-2 move about this edge.
If such a move is possible, terminate and return the triangulation that results from performing this 3-2 move.
\item Create a set $\mathcal{S}$.
Also create a priority queue $\mathcal{Q}$ that stores triangulations in order of increasing complexity.
Add $\mathcal{T}$ to both $\mathcal{S}$ and $\mathcal{Q}$.
\item\label{item:parallel} While $\mathcal{Q}$ is non-empty:
	\begin{enumerate}[nosep, label={(\roman*)}]
	\item Remove the first triangulation $\mathcal{F}$
	(i.e., a triangulation with smallest complexity) from $\mathcal{Q}$,
	and let $m$ denote the number of bad edges in $\mathcal{F}$.
	\item For a 3-2 move about an edge $e$, call this move \textbf{eligible} if $e$ is a good edge;
	for a 2-3 move, let $e$ denote the new edge that is created by this move,
	and call this move \textbf{eligible} if either
	$e$ is a good edge, or $e$ is a bad edge but $m<n+x$.
	For each eligible move on $\mathcal{F}$, check whether (up to combinatorial isomorphism)
	the set $\mathcal{S}$ already contains the triangulation $\mathcal{G}$ that
	we obtain after performing this move. If not:
		\begin{itemize}[nosep]
		\item Add $\mathcal{G}$ to both $\mathcal{S}$ and $\mathcal{Q}$.
		\item Perform all possible sequences of 3-2 moves about bad edges in $\mathcal{G}$.
		For each triangulation $\mathcal{T}'$ that we obtain from such moves,
		if $\mathcal{T}'$ does not already appear in the set $\mathcal{S}$ (up to combinatorial isomorphism),
		then add $\mathcal{T}'$ to both $\mathcal{S}$ and $\mathcal{Q}$.
		\item If we can find such a sequence consisting of $(m-n+1)$ 3-2 moves,
		then the final triangulation $\mathcal{T}^{\ast}$ in this sequence has $n-1$ bad edges.
		Terminate and return $\mathcal{T}^{\ast}$.
		\end{itemize}
	\end{enumerate}
\end{enumerate}
\end{algorithm}

Visit \url{https://github.com/AlexHe98/triang-counterex} to see our implementation of Algorithm~\ref{algm:targEnum}.
We discuss the major details of this implementation in Sections~\ref{subsec:multi} and~\ref{subsec:otherImp}.

It is worthwhile to explain why we terminate the search as soon as we find a triangulation with $n-1$ bad edges,
rather than allowing the search to continue until it (hopefully) finds a triangulation with no bad edges at all.
This is partly a remnant of our early investigations into the conjectures from Section~\ref{subsec:conj};
initially, it was not at all clear whether (for instance) triangulations with no core edges even existed,
so we were happy to make any kind of progress towards reducing the number of core edges.
One reason we have persisted with terminating the search early is that we have, on occasion,
found it useful to have the flexibility to make adjustments as we progressively lower the number of bad edges.
Examples of such adjustments include:
\begin{itemize}
\item changing the number $x$ of extra bad edges that we allow Algorithm~\ref{algm:targEnum} to use
(though we have not encountered an example where this particular adjustment was helpful);
\item changing the number of concurrent processes that we use in step~\ref{item:parallel} of Algorithm~\ref{algm:targEnum}
(we discuss this in more detail in Section~\ref{subsec:multi},
and give examples in Section~\ref{subsec:remCore} and Appendix~\ref{appen:remTunMore}); or
\item even changing how we measure the complexity of a triangulation
(see Section~\ref{subsec:remFibre} for an example where this was useful).
\end{itemize}

\subsection{Troublesome regions, concurrent computation, and instability}\label{subsec:multi}

In Section~\ref{subsec:defects}, we mentioned regions of the Pachner graph where
the bad edges have very low degree defect, but multiplicity defect equal to $2$
(such edges naturally occur, for instance, at the hearts of layered solid tori).
Although placing a high priority on reducing multiplicity defect does help the search
avoid such regions, if the search nevertheless gets trapped in such a region then it can be quite difficult to escape:
the only way out is to perform moves that \emph{increase} the degree defect of the bad edges,
which goes completely against the grain of what our targeted search attempts to prioritise.

This phenomenon occurs more generally: a locally optimal move
can send the search into a ``troublesome'' region (which could, a priori, be \emph{infinite}) of the Pachner graph where
none of the subsequently available moves decrease the complexity of the triangulation.
In the case discussed above, this causes the search to get stuck enumerating lots of triangulations
with similar complexity;
as we will see in Section~\ref{subsec:remCore}, there are also cases where
the search can start enumerating lots of triangulations with \emph{rapidly increasing} complexity.

Algorithm~\ref{algm:targEnum} is especially vulnerable to falling into
such troublesome regions if we deal with triangulations one at a time in step~\ref{item:parallel}.
However, when we instead use multiple processes to deal with several triangulations concurrently,
the search is sometimes able to either avoid or escape these troublesome regions.
There are probably two drivers for this:
\begin{enumerate}[label={(\arabic*)}]
\item using multiple processes causes the search to explore with more ``breadth'' than a purely greedy approach; and
\item the search gains some randomness because the order
in which triangulations are inserted into the priority queue $\mathcal{Q}$ could vary each time we run the search.
\end{enumerate}
There may be more direct methods to achieve similar behaviour;
our method was good enough, and had a low cost
(both in human effort and in computational complexity).

One drawback is that it is impossible to know in advance how many concurrent processes we should use.
In fact, as we will see in Section~\ref{sec:results}, there is often a ``sweet spot'' where
running the search will produce a triangulation $\mathcal{T}^\ast$ with the desired number of bad edges,
but if we significantly increase or decrease the number of processes,
then the search tends to get trapped in a troublesome region.

Our solution is to implement Algorithm~\ref{algm:targEnum} so that it
periodically prints an update on the complexity of the triangulations that
it is currently dealing with, and to manually check that the search is
making satisfactory progress towards reducing the complexity.
If we do not observe such progress, then we simply restart the search,
possibly with a change to the number of concurrent processes that we use.
Although it is not ideal that some human intervention is required,
this is not unheard of for difficult problems in computational topology that
only need to be successfully solved once (such as finding a counterexample).
Another example of this is the work done in~\cite{Burton2020}
to extend the census of prime knots up to $19$ crossings.

The last thing that we mention here is that our implementation is somewhat unstable.
Indeed, we have already noted that the search can produce different results when we
use a different number of concurrent processes, or when we simply rerun the search.
We have encountered one other manifestation of this instability:
even refactoring the source code can significantly change the results of the search.
We have not managed to find a way to modify the algorithm to make it more stable.

\subsection{Other implementation details}\label{subsec:otherImp}

We now outline some other important details of our implementation of Algorithm~\ref{algm:targEnum}.

First, as mentioned in Section~\ref{subsec:pachner}, we use a bespoke implementation of 2-3 and 3-2 moves
which allows us to minimise how often we need to check whether an edge is bad.
This is crucial because for our purposes, checking whether an edge is bad
generally relies on computationally expensive techniques involving normal surfaces:
\begin{itemize}
\item To recognise a core edge or a tunnel edge, we need to delete a small regular neighbourhood of the edge,
and test whether the resulting $3$-manifold is a solid torus or a genus-$2$ handlebody, respectively.
For this, we use handlebody recognition (Algorithm~\ref{algm:handlebody}).
\item To recognise a Seifert fibre edge, we rely on Algorithm~\ref{algm:seifert}.
\end{itemize}

Second, since Algorithm~\ref{algm:targEnum} always considers triangulations up to
combinatorial isomorphism, we actually store isomorphism signatures in
the set $\mathcal{S}$ and the priority queue $\mathcal{Q}$.
As described in~\cite{Burton2011arXiv}, isomorphism signatures are
a standard and indispensable tool when exploring the Pachner graph.

Third, when the priority queue $\mathcal{Q}$ contains multiple triangulations with the same complexity,
we choose to prioritise such triangulations by insertion order.

Finally, in practice, we usually set the input variable $x$ to be $0$,
in which case Algorithm~\ref{algm:targEnum} discards all triangulations with
more bad edges than the initial input triangulation.
Having said this, there are occasions where taking $x>0$ (and hence allowing
the number of bad edges to increase) may be beneficial;
we discuss this in a little more detail in Section~\ref{subsec:remFibre}.

\section{The counterexamples}\label{sec:results}

In Sections~\ref{subsec:remCore},~\ref{subsec:remTunnel} and~\ref{subsec:remFibre},
we discuss experimental details specific to Conjectures~\ref{conj:core},~\ref{conj:tunnel} and~\ref{conj:fibre}, respectively;
in particular, as mentioned in Section~\ref{sec:intro}, we give counterexamples to all three of these conjectures.
Moreover, we show in Section~\ref{subsec:infinite} that each of our counterexamples
can be turned into an infinite family of counterexamples.

All of the computations that we discuss in this section were run on a laptop with
an Intel Core i5-7200U processor, which has just two physical cores divided into four logical processors.
It is therefore remarkable that we were able to obtain almost all of our counterexamples in
no more than a few \emph{minutes} of wall time; even the worst example that we present
(see Figure~\ref{fig:remTun7_7} in Appendix~\ref{appen:remTunMore})
required less than $34$ minutes of wall time.
This is mostly due to the fact that our targeted search was able to home in on an extremely small
portion of the search space:
for each $3$-manifold for which we successfully found a counterexample,
we never needed to enumerate more than a few thousand triangulations.

\subsection{Triangulations with no core edges}\label{subsec:remCore}

We first discuss the special case of the $3$-sphere;
recall that in this case, core edges are equivalent to \emph{unknotted edges}.
We saw in Section~\ref{subsec:counterex} that the $3$-sphere has \num{422533279}
one-vertex triangulations with $10$ or fewer tetrahedra,
and that all of these triangulations have a core edge (in fact, at least two core edges).
In spite of this seemingly compelling evidence, the triangulation $\mathcal{T}$ with isomorphism signature
\[
\texttt{uLLvQQvLAPvPAQccdfeghhgmklnorsqssttthsaaggggaaaaaaanaaagb}
\]
turns out to be a $20$-tetrahedron one-vertex $3$-sphere with no core edges.

To find this counterexample, we began by arbitrarily selecting
the triangulation $\mathcal{T}_2$ with isomorphism signature \texttt{cMcabbgqs};
this is a one-vertex $3$-sphere with complexity $(2,5,4,2)$ (recall Definition~\ref{def:complexity}).
After running Algorithm~\ref{algm:targEnum} twice, we obtained a $22$-tetrahedron
one-vertex triangulation $\mathcal{T}_0$ of the $3$-sphere with no core edges;
the results are summarised in Figure~\ref{fig:remCorS3}.

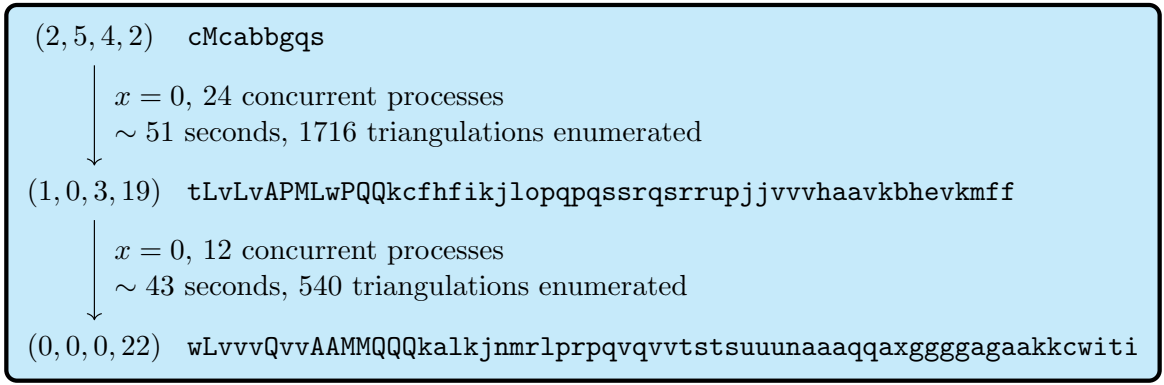
\begin{figure}[htbp]
\centering
\begin{tikzcd}[
	column sep=0ex,
	row sep=huge,
	/tikz/column 2/.append style={anchor=base west},
	every label/.append style={font=\normalsize},
	resultsbox,]
(2,5,4,2)
\ar{d}{\begin{array}{l}
\text{$x=0$, $24$ concurrent processes}\\
\text{$\sim51$ seconds, $1716$ triangulations enumerated}
\end{array}}
&
\texttt{cMcabbgqs}
\\
(1,0,3,19)
\ar{d}{\begin{array}{l}
\text{$x=0$, $12$ concurrent processes}\\
\text{$\sim43$ seconds, $540$ triangulations enumerated}
\end{array}}
&
\texttt{tLvLvAPMLwPQQkcfhfikjlopqpqssrqsrrupjjvvvhaavkbhevkmff}
\\
(0,0,0,22)
&
\texttt{wLvvvQvvAAMMQQQkalkjnmrlprpqvqvvtstsuuunaaaqqaxggggagaakkcwiti}
\end{tikzcd}
\caption{Removing core edges from \texttt{cMcabbgqs} ($3$-sphere).}
\label{fig:remCorS3}
\end{figure}

To turn $\mathcal{T}_0$ into the $20$-tetrahedron example $\mathcal{T}$,
we ran a breadth-first search through the Pachner graph,
but with the restriction that we ignored any 2-3 move that introduces a core edge.
This took approximately $367$ seconds of wall time.
We also know that we cannot reduce the size further (still ignoring 2-3 moves that introduce core edges)
without passing through at least one triangulation with more than $23$ tetrahedra;
checking this required an additional $228$ seconds of wall time.

It is worth noting that to remove one core edge from $\mathcal{T}_2$, we tried
running Algorithm~\ref{algm:targEnum} with different numbers of concurrent processes,
but using around $24$ processes seems to produce the best results.
In particular, we initially tried using $12$ processes, but this
produced a triangulation with isomorphism signature
\[
\texttt{sLvAAvLAzMMQQcdceflkmjmqonprqprrhvrqnkkkksqeekocksf}
\]
and complexity $(1,2,1,18)$.
The core edge in this triangulation has multiplicity defect $2$ but degree defect $1$;
this is exactly one of the troublesome cases that we mentioned in Section~\ref{subsec:multi}.
We needed to increase the number of processes to $24$ to avoid this.

Somewhat surprisingly, increasing the number of processes significantly beyond $24$
also appears to adversely affect the effectiveness of Algorithm~\ref{algm:targEnum}.
For example, with $36$ processes, although the search is sometimes able to
remove a core edge, it seems to do so less reliably.
Instead, we find that the search has a tendency to get trapped in a different type of troublesome region:
\begin{itemize}
\item After about $10$ seconds, the search reaches a triangulation with complexity $(2,4,16,15)$.
\item After about $20$ seconds, the search reaches a triangulation with complexity $(2,4,23,22)$.
\item After about $30$ seconds, the search reaches a triangulation with complexity $(2,4,26,25)$.
\end{itemize}
When this happens, the complexity only appears to increase further if we allow the search to continue.

Beyond $3$-spheres, we also found one-vertex triangulations with no core edges for
the following lens spaces: $L_{6,1}$, $L_{9,2}$, $L_{11,2}$, $L_{13,3}$, and $L_{16,3}$.
See Appendix~\ref{appen:remCoreMore} for detailed results.

\subsection{Triangulations with no tunnel edges}\label{subsec:remTunnel}

All fourteen prime knots with crossing number up to $7$ have tunnel number equal to one;
here, we specify these knots using their Rolfsen names.
Except for the $5_2$ knot, we were able to find one-vertex ideal triangulations
with no tunnel edges for all of these knots:
\begin{itemize}
\item For the trefoil knot (i.e., the $3_1$ knot), we arbitrarily selected
the isomorphism signature \texttt{cPcbbbadu}.
Figure~\ref{fig:remTunTrefoil} summarises the result of running Algorithm~\ref{algm:targEnum}.
	\begin{figure}[htbp]
	\centering
	\begin{tikzcd}[
		column sep=0ex,
		row sep=large,
		/tikz/column 2/.append style={anchor=base west},
		every label/.append style={font=\normalsize},
		resultsbox,]
	(1,8,7,2)
	\ar{d}{
	\text{$\sim58$ seconds, $307$ triangulations enumerated}}
	&
	\texttt{cMcbbbadu}
	\\
	(0,0,0,11)
	&
	\texttt{lLLLzzQQcbcgeijgjikkktsltaurattgg}
	\end{tikzcd}
	\caption{Removing tunnel edges from \texttt{cPcbbbadu} (trefoil knot); $x=0$, $8$ processes.}
	\label{fig:remTunTrefoil}
	\end{figure}
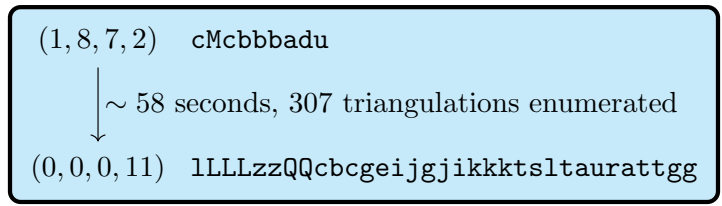
\item For the figure-eight knot (i.e., the $4_1$ knot), we arbitrarily selected
the isomorphism signature \texttt{cPcbbbiht}.
Figure~\ref{fig:remTunFigure8} summarises the results of running Algorithm~\ref{algm:targEnum} twice.
	\begin{figure}[htbp]
	\centering
	\begin{tikzcd}[
		column sep=0ex,
		row sep=large,
		/tikz/column 2/.append style={anchor=base west},
		every label/.append style={font=\normalsize},
		resultsbox,]
	(2,4,3,2)
	\ar{d}{
	\text{$\sim6$ seconds, $26$ triangulations enumerated}}
	&
	\texttt{cPcbbbiht}
	\\
	(1,6,7,5)
	\ar{d}{
	\text{$\sim224$ seconds, $903$ triangulations enumerated}}
	&
	\texttt{fLLQcacdedejbqqww}
	\\
	(0,0,0,12)
	&
	\texttt{mLvzALAQQccefhijliklklhnipouapufbvv}
	\end{tikzcd}
	\caption{Removing tunnel edges from \texttt{cPcbbbiht} (figure-eight knot); $x=0$, $24$ processes.}
	\label{fig:remTunFigure8}
	\end{figure}
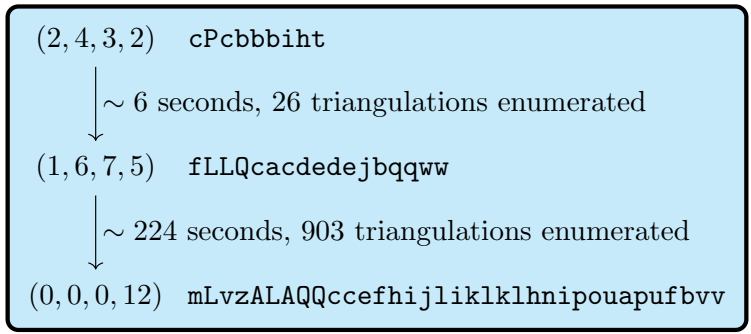
\item For the $(5,2)$ torus knot (i.e., the $5_1$ knot), we arbitrarily selected
the isomorphism signature \texttt{dLQbcccaekv}.
Figure~\ref{fig:remTunTorus52} summarises the results of running Algorithm~\ref{algm:targEnum} twice.
	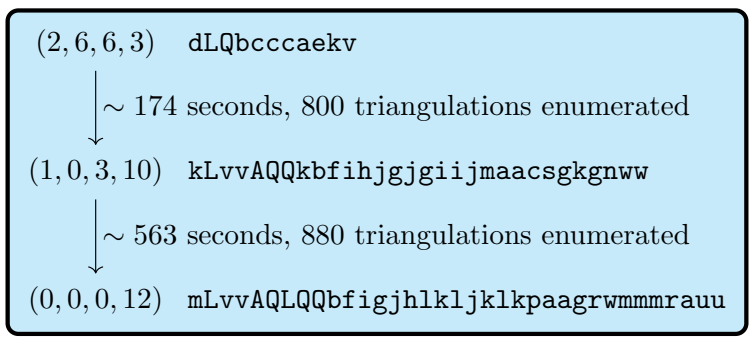
\begin{figure}[htbp]
	\centering
	\begin{tikzcd}[
		column sep=0ex,
		row sep=large,
		/tikz/column 2/.append style={anchor=base west},
		every label/.append style={font=\normalsize},
		resultsbox,]
	(2,6,6,3)
	\ar{d}{
	\text{$\sim174$ seconds, $800$ triangulations enumerated}}
	&
	\texttt{dLQbcccaekv}
	\\
	(1,0,3,10)
	\ar{d}{
	\text{$\sim563$ seconds, $880$ triangulations enumerated}}
	&
	\texttt{kLvvAQQkbfihjgjgiijmaacsgkgnww}
	\\
	(0,0,0,12)
	&
	\texttt{mLvvAQLQQbfigjhlkljklkpaagrwmmmrauu}
	\end{tikzcd}
	\caption{Removing tunnel edges from \texttt{dLQbcccaekv} ($(5,2)$ torus knot); $x=0$, $24$ processes.}
	\label{fig:remTunTorus52}
	\end{figure}
\item We also found one-vertex ideal triangulations with no tunnel edges for all ten prime knots with crossing number equal to $6$ or $7$.
See Appendix~\ref{appen:remTunMore} for detailed results.
\end{itemize}

One striking observation is that although the triangulations here are relatively small,
and although we only enumerated relatively few triangulations,
our running times are nevertheless significantly longer than those in Section~\ref{subsec:remCore}.
The main reason for this is probably that recognising genus-$2$ handlebodies is
more expensive than recognising solid tori.

\subsection{Triangulations with no Seifert fibre edges}\label{subsec:remFibre}

Let $\mathcal{M}$ denote the small Seifert fibre space
$\mathcal{S}\left( \frac{1}{2}, \frac{2}{3}, -\frac{1}{3} \right)$
(recall Construction~\ref{cons:sfs}).
We found the following $11$-tetrahedron one-vertex triangulation $\mathcal{T}$ of $\mathcal{M}$,
which turns out to have no Seifert fibre edges:
\[
\texttt{lLLLLPMQccddfjiihikkkpkrwaaacttvc}.
\]
To do this, we began with the isomorphism signature \texttt{fLLQcaceeedjkuxkj}.
Running Algorithm~\ref{algm:targEnum} four times with $x=1$ and $12$ concurrent processes produced
a $13$-tetrahedron triangulation $\mathcal{T}^\ast$ with no Seifert fibre edges;
the results are summarised in Figure~\ref{fig:remFib}.
We then used a breadth-first search (similar to the one from Section~\ref{subsec:remCore})
to turn $\mathcal{T}^\ast$ into the $11$-tetrahedron triangulation $\mathcal{T}$.

\begin{figure}[htbp]
\centering
\begin{tikzcd}[
	column sep=0ex,
	row sep=large,
	/tikz/column 2/.append style={anchor=base west},
	every label/.append style={font=\normalsize},
	resultsbox,]
(4,2,2,5)
\ar{d}{
\text{$\sim3$ seconds, $25$ triangulations enumerated}}
&
\texttt{fLLQcaceeedjkuxkj}
\\
(3,2,3,6)
\ar{d}{
\text{$\sim6$ seconds, $344$ triangulations enumerated}}
&
\texttt{gLLAQbdedfffendolgn}
\\
(2,2,4,8)
\ar{d}{
\text{$\sim12$ seconds, $664$ triangulations enumerated}}
&
\texttt{ivLLQQccehgfgfhhjsquaaagj}
\\
(1,0,2,12)
\ar{d}{
\text{$\sim15$ seconds, $251$ triangulations enumerated}}
&
\texttt{mLLwPvMQQacdhghklkjlklnkamamvirvlji}
\\
(0,0,0,13)
&
\texttt{nLvPwLzQQkccgfiikjmklmlmhnahlupmtrsvgb}
\end{tikzcd}
\caption{Removing Seifert fibre edges from \texttt{fLLQcaceeedjkuxkj}
($\mathcal{M} = \mathcal{S}\left( \frac{1}{2}, \frac{2}{3}, -\frac{1}{3} \right)$);
$x=1$, $12$ processes.}
\label{fig:remFib}
\end{figure}

The reason for taking $x=1$ instead of $x=0$ comes from older versions of the code.
Early versions of Algorithm~\ref{algm:seifert} included fewer normal surfaces in the analysis,
making it less effective at identifying good edges (in this context, edges not isotopic to Seifert fibres).
At the time, we encountered triangulations that were ``dead ends'', in the following sense:
every 2-3 move would create an edge that we could not certify as good.
Our short-term solution was to allow the search to introduce extra bad edges if necessary.
Having since strengthened Algorithm~\ref{algm:seifert}, taking $x=1$ no longer seems necessary
for the example that we just discussed; we did not revert to using $x=0$ simply because we found no compelling reason to do so.

It is also worth mentioning that we encountered one example where even taking $x=1$ was not enough to prevent ``dead ends''
(even with the current implementation of Algorithm~\ref{algm:seifert}).
Specifically, this happens for the triangulation with isomorphism signature \texttt{fvPQcdecedekrsnrs}
and complexity $(6,0,2,5)$; this represents the \emph{Poincar\'{e} homology sphere}, which admits the Seifert fibration
$\mathcal{S}\left( \frac{1}{2}, \frac{1}{3}, -\frac{4}{5} \right)$.
When we increased the value of $x$ to $2$, we were able to reach a triangulation with four Seifert fibre edges,
but we did not make any further progress.

On a related note, we did manage to find a counterexample for one other small Seifert fibre space,
namely $\mathcal{S}\left( \frac{1}{2}, \frac{1}{3}, \frac{2}{3} \right)$,
which we denote by $\mathcal{M}'$.
Specifically, we found the following $20$-tetrahedron triangulation $\mathcal{T}'$ of $\mathcal{M}'$,
which has no Seifert fibre edges:
\[
\texttt{uLLPvvLQLAPAPQccdfeknlipnomqnrtstssthsipaaliaravlkuaaxxxx}.
\]
To do this, we began with the isomorphism signature \texttt{gLLMQacdefefjkaknkr},
which has complexity $(4,2,4,6)$.
However, we were not able to remove all four Seifert fibre edges by simply running Algorithm~\ref{algm:targEnum} four times.
Instead, to remove the first and third such edges, we needed to modify how our search measured the complexity of our triangulations.

To describe this modification, consider a one-vertex triangulation $\mathcal{S}$ of $\mathcal{M}'$,
and let $e_1,\ldots,e_k$ denote all the bad edges in $\mathcal{S}$
(where, ``bad'' in this context means that the edge is a Seifert fibre edge).
We should imagine that $\mathcal{S}$ is some intermediate triangulation that we encounter in a search
similar to Algorithm~\ref{algm:targEnum}, so in the background there is some initial input triangulation that has $n$ bad edges,
and there is also some non-negative integer $x$ such that the search discards any triangulations with more than $n+x$ bad edges.
However, unlike in Algorithm~\ref{algm:targEnum}, we temporarily forget that
our ultimate goal is to remove all $k$ bad edges in $\mathcal{S}$, and instead focus entirely on
the intermediate goal of removing $k-n+1$ bad edges from $\mathcal{S}$
(and hence reducing the number of bad edges to $n-1$).
To this end, we single out two (not necessarily distinct) bad edges as follows:
\begin{itemize}
\item First, arrange the bad edges in order of increasing \emph{multiplicity defect},
and take $e_m$ to be the $(k-n+1)$st bad edge in this ordering.
\item Then, arrange the bad edges in order of increasing \emph{degree defect},
and take $e_d$ to be the $(k-n+1)$st bad edge in this ordering.
\end{itemize}
We modify Algorithm~\ref{algm:targEnum} by measuring the complexity of $\mathcal{S}$ as follows:
\[
\left(\;
k,\;
\mu(e_m),\;
\delta(e_d),\;
\lvert\mathcal{S}\rvert
\;\right).
\]

Figure~\ref{fig:remFibInt} summarises the results of removing all four Seifert fibre edges from
the triangulation of $\mathcal{M}'$ given by the isomorphism signature \texttt{gLLMQacdefefjkaknkr};
we used $x=1$ and $12$ concurrent processes at each stage.
In the stages where we used the modified complexity instead of the (unmodified) complexity introduced in
Definition~\ref{def:complexity}, Figure~\ref{fig:remFibInt} lists the unmodified complexity above the modified complexity.
We obtained a $25$-tetrahedron triangulation of $\mathcal{M}'$ with no Seifert fibre edges,
which we then simplified to the $20$-tetrahedron triangulation $\mathcal{T}'$
using the same breadth-first search technique as before.

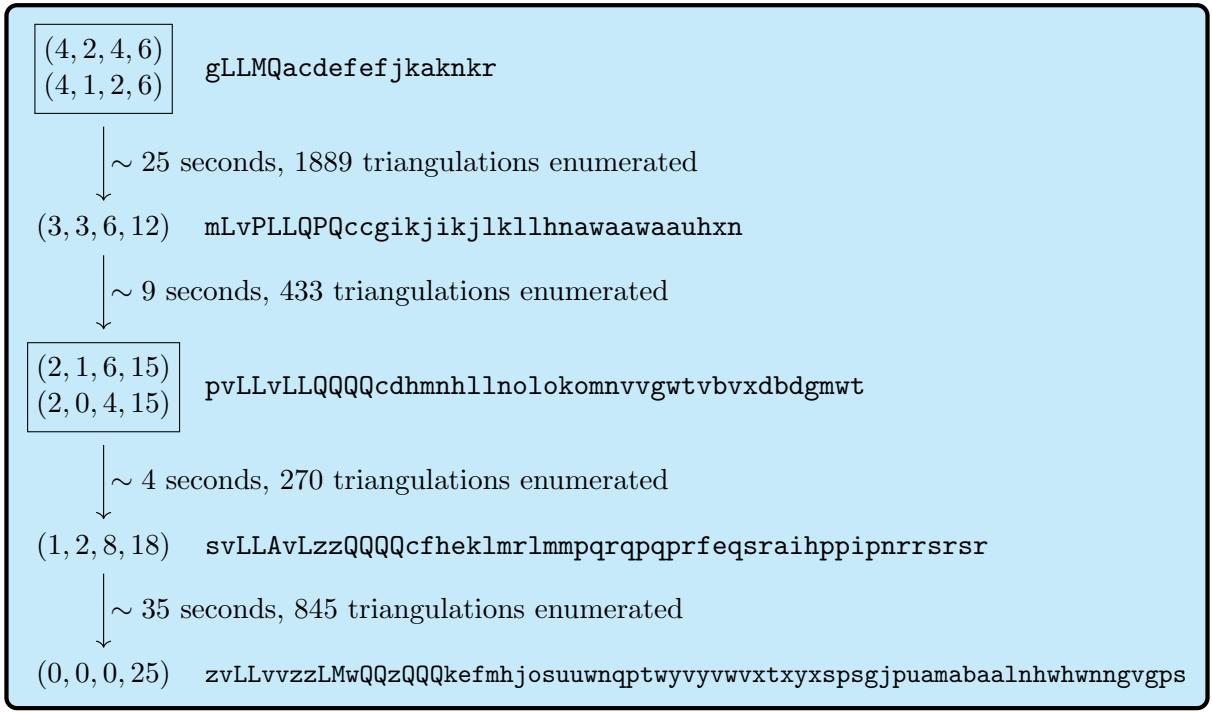
\begin{figure}[htbp]
\centering
\begin{tikzcd}[
	column sep=0ex,
	row sep=large,
	/tikz/column 2/.append style={anchor=base west},
	every label/.append style={font=\normalsize},
	resultsbox,]
\fbox{$
\begin{matrix}
(4,2,4,6)\\
(4,1,2,6)
\end{matrix}
$}
\ar{d}{
\text{$\sim25$ seconds, $1889$ triangulations enumerated}}
&
\texttt{gLLMQacdefefjkaknkr}
\\
(3,3,6,12)
\ar{d}{
\text{$\sim9$ seconds, $433$ triangulations enumerated}}
&
\texttt{mLvPLLQPQccgikjikjlkllhnawaawaauhxn}
\\
\fbox{$
\begin{matrix}
(2,1,6,15)\\
(2,0,4,15)
\end{matrix}
$}
\ar{d}{
\text{$\sim4$ seconds, $270$ triangulations enumerated}}
&
\texttt{pvLLvLLQQQQcdhmnhllnolokomnvvgwtvbvxdbdgmwt}
\\
(1,2,8,18)
\ar{d}{
\text{$\sim35$ seconds, $845$ triangulations enumerated}}
&
\texttt{svLLAvLzzQQQQcfheklmrlmmpqrqpqprfeqsraihppipnrrsrsr}
\\
(0,0,0,25)
&
\text{\small\texttt{zvLLvvzzLMwQQzQQQkefmhjosuuwnqptwyvyvwvxtxyxspsgjpuamabaalnhwhwnngvgps}}
\end{tikzcd}
\caption{Removing Seifert fibre edges from \texttt{gLLMQacdefefjkaknkr}
($\mathcal{M}' = \mathcal{S}\left( \frac{1}{2}, \frac{1}{3}, \frac{2}{3} \right)$);
$x=1$, $12$ processes.
The boxes indicate where we used the modified complexity.}
\label{fig:remFibInt}
\end{figure}

\subsection{From one counterexample to infinitely many}\label{subsec:infinite}

Here, we show that each of the above counterexamples can be turned into infinite families.
The key is the following proposition:

\begin{proposition}\label{prop:infinite}
Let $\mathcal{M}$ be either a closed $3$-manifold, or a $3$-manifold with
a single boundary component of positive genus (and no other boundary components).
In the closed case, let $\mathcal{T}$ be a one-vertex triangulation of $\mathcal{M}$;
in the bounded case, let $\mathcal{T}$ be a one-vertex ideal triangulation of $\mathcal{M}$.
Let $P$ be a property of edges of $\mathcal{T}$ that is invariant under ambient isotopy.
Call $\mathcal{T}$ \textbf{interesting} if every edge in $\mathcal{T}$ satisfies $P$.
If $\mathcal{M}$ has an interesting triangulation, then $\mathcal{M}$ has
infinitely many interesting triangulations.
\end{proposition}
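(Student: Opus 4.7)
The plan is to show that from any interesting triangulation $\mathcal{T}$ of $\mathcal{M}$ one can construct a strictly larger interesting triangulation $\mathcal{T}'$ of $\mathcal{M}$; iterating this then produces a sequence $\mathcal{T} = \mathcal{T}_0, \mathcal{T}_1, \mathcal{T}_2, \ldots$ of interesting triangulations with $|\mathcal{T}_{i+1}| > |\mathcal{T}_i|$. These are automatically pairwise non-isomorphic because they have pairwise distinct sizes, which gives infinitely many interesting triangulations of $\mathcal{M}$.

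To enlarge $\mathcal{T}$ while preserving interestingness, I would pick any edge $e$ of $\mathcal{T}$ (which satisfies $P$ by hypothesis) and modify the triangulation only inside a regular neighbourhood $N(e)$: a solid torus in the closed case, a $3$-ball meeting $\partial\mathcal{M}$ in two discs in the ideal case. The $\deg(e)$ tetrahedra of $\mathcal{T}$ meeting $e$ form a triangulation $\tau$ of $N(e)$. The plan is to construct a second triangulation $\tau'$ of $N(e)$ that agrees with $\tau$ on $\partial N(e)$, uses strictly more tetrahedra, keeps $e$ as an edge, and contains an extra edge $e^{*}$ lying in the interior of $N(e)$ that is isotopic to $e$ within $N(e)$. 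Because $\tau$ and $\tau'$ triangulate the same compact manifold with the same boundary triangulation, a relative (boundary-fixing) version of Pachner's theorem converts the replacement into a finite sequence of $2$-$3$ and $3$-$2$ moves supported inside $N(e)$, producing a new triangulation $\mathcal{T}'$ of $\mathcal{M}$ with $|\mathcal{T}'| > |\mathcal{T}|$.

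Interestingness of $\mathcal{T}'$ is then automatic: each edge of $\mathcal{T}'$ either coincides with an edge of $\mathcal{T}$ (hence already satisfies $P$) or is a new edge lying in $N(e)$ and isotopic to $e$ inside $N(e) \subset \mathcal{M}$. Since $P$ is invariant under ambient isotopy and $e$ satisfies $P$, every new edge also satisfies $P$, so $\mathcal{T}'$ is interesting and the induction continues.

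The main obstacle is producing the local retriangulation $\tau'$ with the stated properties. In the closed case one can obtain $\tau'$ by inserting a "shell" of extra tetrahedra around $e$ inside the solid torus $N(e)$; a layered-solid-torus-type construction wrapped around $e$, fitted to the one-vertex boundary triangulation inherited from $\tau$, provides such a $\tau'$ while introducing at least one additional internal core edge parallel to $e$. The ideal case is handled analogously, with the solid torus $N(e)$ replaced by a $3$-ball and using boundary-layering constructions near the positive-genus boundary component of $\mathcal{M}$. Once $\tau'$ exists, the relative Pachner theorem supplies the required move sequence and the argument is complete.
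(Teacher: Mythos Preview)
Your high-level strategy---enlarge $\mathcal{T}$ by a local modification that only introduces edges isotopic to existing ones, then iterate---is exactly the paper's idea. The difference is in execution, and that is where your proposal has a genuine gap.

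The paper's construction is a single explicit move: pick a tetrahedron $\Delta$ with opposite edges $e$ and $f$, and replace $\Delta$ by a three-tetrahedron gadget (a $0$-$2$ move, also called a lune move). This introduces precisely two new edges $e'$ and $f'$, and these are visibly parallel to $e$ and $f$ respectively. No neighbourhood analysis, no relative Pachner theorem, no case split between closed and ideal---the move is purely combinatorial and works identically in both settings.

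Your version, by contrast, asks for a retriangulation $\tau'$ of $N(e)$ with the property that \emph{every} new interior edge is isotopic to $e$, but the constructions you sketch do not deliver this. A ``layered-solid-torus-type construction'' typically introduces edges realising many different $(p,q)$-slopes, and a boundary curve of slope $(p,q)$ in a solid torus is isotopic to the core only when it is a longitude; the generic new edge will not be. You only explicitly arrange for \emph{one} extra edge $e^\ast$ parallel to $e$, yet your interestingness argument requires it of all new edges. There is also a subtlety you pass over: in a generalised one-vertex triangulation the tetrahedra incident to $e$ need not glue up to a clean solid-torus neighbourhood (a tetrahedron may meet $e$ several times, the ``star'' can be highly self-identified), so the statement that they ``form a triangulation $\tau$ of $N(e)$'' already needs care. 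Finally, the relative Pachner theorem is a red herring: if $\tau$ and $\tau'$ agree on $\partial N(e)$ then swapping one for the other directly produces a triangulation of $\mathcal{M}$; no connectivity result is needed.

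The fix is simply to replace your vague local retriangulation by the explicit $0$-$2$ move on a tetrahedron, after which the rest of your argument goes through verbatim.
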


\begin{proof}
Let $\mathcal{T}$ be an interesting triangulation of $\mathcal{M}$.
Fix a tetrahedron $\Delta$ of $\mathcal{T}$, and let $e$ and $f$ denote a pair of opposite edges of $\Delta$.
We obtain a new triangulation $\mathcal{T}'$ of $\mathcal{M}$ by replacing $\Delta$
with a three-tetrahedron gadget, as shown in Figure~\ref{fig:infiniteCounterex}.\footnote{
This modification is a special case of an elementary move known as a \textbf{0-2 move} or \textbf{lune move}.}
In terms of $1$-skeletons, observe that all we have done is introduce two new edges $e'$ and $f'$
such that $e'$ is isotopic to $e$ and $f'$ is isotopic to $f$.
Thus, $\mathcal{T}'$ is an interesting triangulation of $\mathcal{M}$.
Repeating this procedure indefinitely gives the desired infinite family of interesting triangulations of $\mathcal{M}$.
\end{proof}

\begin{figure}[htbp]
\centering
	\begin{tikzpicture}
	\node[inner sep=0pt] (Before) at (0,0)
		{\includegraphics[scale=1]{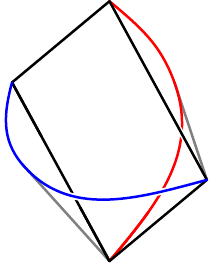}};
	\node[inner sep=0pt] (After) at (5,0)
		{\includegraphics[scale=1]{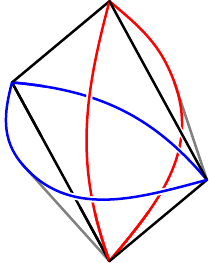}};

	\draw[very thick, line cap=round, -{Stealth}]
		($(Before.east)+(-0.1,0)$) -- ($(After.west)+(0,0)$);

	\node[blue] at ($(Before)+(-1.5,-0.7)$) {$e$};
	\node[red] at ($(Before)+(1.4,0.8)$) {$f$};

	\node[blue] at ($(After)+(-1.5,-0.7)$) {$e$};
	\node[red] at ($(After)+(1.4,0.8)$) {$f$};
	\node[blue] at ($(After)+(0.3,0.6)$) {$e'$};
	\node[red] at ($(After)+(-0.5,-0.1)$) {$f'$};
	\end{tikzpicture}
\caption{Building a new interesting triangulation.}
\label{fig:infiniteCounterex}
\end{figure}

Combining Proposition~\ref{prop:infinite} with the counterexamples from
Sections~\ref{subsec:remCore},~\ref{subsec:remTunnel} and~\ref{subsec:remFibre}
immediately yields the following three theorems:

\begin{theorem}
Let $\mathcal{M}$ be one of the following lens spaces:
the $3$-sphere, $L_{6,1}$, $L_{9,2}$, $L_{11,2}$, $L_{13,3}$, or $L_{16,3}$.
Then $\mathcal{M}$ has infinitely many one-vertex triangulations with no core edges.
\end{theorem}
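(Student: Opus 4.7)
The plan is to invoke Proposition~\ref{prop:infinite} directly, using the explicit counterexamples constructed in section~\ref{subsec:remCore} as the seeds for the infinite families. Since each of the listed lens spaces is closed, the relevant case of Proposition~\ref{prop:infinite} is the closed case, which asks for a one-vertex triangulation in which every edge satisfies some isotopy-invariant property~$P$.

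The first step is to choose the right property. I would take $P$ to be the property that an edge is \emph{not} a core edge of the ambient lens space. Being a core edge means that the complement of a regular neighbourhood of the edge is a solid torus, and this condition depends only on the ambient isotopy class of the edge as a curve in $\mathcal{M}$; hence $P$ is invariant under ambient isotopy, as required by Proposition~\ref{prop:infinite}. With this choice, a triangulation is ``interesting'' in the sense of Proposition~\ref{prop:infinite} precisely when it has no core edges.

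The second step is to produce, for each of the six lens spaces in the statement, at least one interesting one-vertex triangulation. For the $3$-sphere, the triangulation
\texttt{uLLvQQvLAPvPAQccdfeghhgmklnorsqssttthsaaggggaaaaaaanaaagb}
explicitly constructed in section~\ref{subsec:remCore} has no core edges. For $L_{6,1}$, $L_{9,2}$, $L_{11,2}$, $L_{13,3}$, and $L_{16,3}$, the corresponding counterexamples are recorded in appendix~\ref{appen:remCoreMore}, as noted at the end of section~\ref{subsec:remCore}. In each case the triangulation is one-vertex and has no core edges, so it qualifies as an interesting triangulation in the sense of Proposition~\ref{prop:infinite}.

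The final step is mechanical: apply Proposition~\ref{prop:infinite} to each of these interesting triangulations in turn, obtaining infinitely many one-vertex triangulations of the corresponding lens space in which every edge fails to be a core edge, i.e., which have no core edges at all. There is no substantive obstacle here; the only thing to check is that the property $P$ really is isotopy-invariant (which is immediate from its topological definition) and that the triangulations cited above are genuinely one-vertex, which follows from the construction in section~\ref{subsec:remCore} and the tabulation in appendix~\ref{appen:remCoreMore}.
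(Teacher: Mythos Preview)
Your proposal is correct and matches the paper's own argument exactly: the paper simply states that the theorem follows immediately by combining Proposition~\ref{prop:infinite} with the explicit no-core-edge triangulations exhibited in section~\ref{subsec:remCore} and appendix~\ref{appen:remCoreMore}. Your only addition is making explicit the choice of $P$ and its isotopy-invariance, which the paper leaves implicit.
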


\begin{theorem}
Let $K$ be a prime knot whose crossing number is at most $7$, other than the $5_2$ knot.
Then $K$ has infinitely many one-vertex ideal triangulations with no tunnel edges.
\end{theorem}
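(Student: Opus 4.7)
The plan is to apply Proposition~\ref{prop:infinite} directly to each of the thirteen prime knots in question (that is, every prime knot with crossing number at most $7$ apart from the $5_2$ knot). For each such knot $K$, the complement is a compact $3$-manifold with a single torus boundary component, so it falls into the bounded case of Proposition~\ref{prop:infinite}. The ``property $P$'' that we take is the property of \emph{not} being a tunnel edge; since tunnel-edge-ness is defined by a topological condition on the arc in $S^3$ formed together with $K$, it is clearly invariant under ambient isotopy, and hence so is its negation.

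The first step is to exhibit, for each of the thirteen knots $K$, an ideal triangulation $\mathcal{T}_K$ in which every edge fails to be a tunnel edge. For the trefoil, the figure-eight, and the $(5,2)$ torus knot, such triangulations were constructed explicitly in section~\ref{subsec:remTunnel} (see Figures~\ref{fig:remTunTrefoil}, \ref{fig:remTunFigure8} and~\ref{fig:remTunTorus52}); for the remaining ten prime knots of crossing number $6$ or $7$ (excluding $5_2$), analogous triangulations are recorded in appendix~\ref{appen:remTunMore}. In each case the triangulation is one-vertex and ideal, matching the hypotheses of Proposition~\ref{prop:infinite}, and every edge is certified not to be a tunnel edge via Algorithm~\ref{algm:handlebody}.

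With these triangulations in hand, the second step is simply to quote Proposition~\ref{prop:infinite}: the triangulation $\mathcal{T}_K$ is interesting with respect to the property $P$ above, so there are infinitely many interesting one-vertex ideal triangulations of the complement of $K$. Since ``interesting'' here means precisely ``has no tunnel edges'', this gives the desired infinite family.

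There is no real obstacle: the theorem is a packaging result rather than a new computation. The only thing worth emphasising in the write-up is that we are in the bounded case of Proposition~\ref{prop:infinite}, which requires the boundary to consist of a single component of positive genus. This is automatic for knot complements in $S^3$, whose boundary is a single torus, so the hypotheses are met without further checking.
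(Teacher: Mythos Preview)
Your proposal is correct and follows essentially the same approach as the paper: the paper simply states that combining Proposition~\ref{prop:infinite} with the explicit no-tunnel-edge triangulations from section~\ref{subsec:remTunnel} and appendix~\ref{appen:remTunMore} immediately yields the theorem. Your write-up is somewhat more detailed than the paper's (which gives no separate proof at all), but the logic is identical.
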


\begin{theorem}
Each of the small Seifert fibre spaces $\mathcal{S}\left( \frac{1}{2}, \frac{2}{3}, -\frac{1}{3} \right)$
and $\mathcal{S}\left( \frac{1}{2}, \frac{1}{3}, \frac{2}{3} \right)$
has infinitely many one-vertex triangulations with no Seifert fibre edges.
\end{theorem}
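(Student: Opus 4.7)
The plan is to apply Proposition~\ref{prop:infinite} directly, using the two counterexample triangulations exhibited in section~\ref{subsec:remFibre} as seeds. First I would take $P$ to be the property ``the edge is not isotopic to a Seifert fibre''. Since ambient isotopy of a $3$-manifold carries Seifert fibres to isotopic Seifert fibres, this $P$ is invariant under ambient isotopy, and a triangulation is \emph{interesting} in the sense of Proposition~\ref{prop:infinite} with respect to this $P$ precisely when none of its edges is isotopic to a Seifert fibre.

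Next I would check that the hypotheses of the closed case of Proposition~\ref{prop:infinite} are met. Both $\mathcal{S}\left( \frac{1}{2}, \frac{2}{3}, -\frac{1}{3} \right)$ and $\mathcal{S}\left( \frac{1}{2}, \frac{1}{3}, \frac{2}{3} \right)$ are closed (by Construction~\ref{cons:sfs}, since all three slopes in each are non-integer rationals, the resulting Seifert fibrations over $S^{2}$ have no boundary). Section~\ref{subsec:remFibre} supplies explicit one-vertex triangulations $\mathcal{T}$ and $\mathcal{T}'$ of these respective manifolds (namely the $11$-tetrahedron triangulation \texttt{lLLLLPMQccddfjiihikkkpkrwaaacttvc} and the $20$-tetrahedron triangulation \texttt{uLLPvvLQLAPAPQccdfeknlipnomqnrtstssthsipaaliaravlkuaaxxxx}) whose edges have all been certified, via Algorithm~\ref{algm:seifert}, to not be isotopic to any Seifert fibre. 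Each is therefore interesting with respect to $P$.

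Finally, Proposition~\ref{prop:infinite} then promotes each of $\mathcal{T}$ and $\mathcal{T}'$ to an infinite family of interesting one-vertex triangulations of the corresponding manifold; recall that the proof of that proposition repeatedly replaces a chosen tetrahedron with a three-tetrahedron 0-2 gadget, introducing only new edges that are isotopic to pre-existing ones, so the property $P$ is preserved on every edge of every triangulation in the sequence. This is exactly the statement of the theorem.

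The only step that even warrants a moment's thought is the isotopy-invariance of $P$, which legitimises the application of Proposition~\ref{prop:infinite}; this is immediate. All the substantive obstacles lie upstream, in the targeted search of section~\ref{subsec:remFibre} that produces $\mathcal{T}$ and $\mathcal{T}'$ in the first place, and in Proposition~\ref{prop:infinite} itself — the present theorem is simply their combination, so there is no real obstacle to overcome here.
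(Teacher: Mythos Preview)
Your proposal is correct and follows exactly the paper's own approach: the paper simply states that the theorem is obtained by combining Proposition~\ref{prop:infinite} with the explicit counterexample triangulations from section~\ref{subsec:remFibre}, and your write-up just spells out this combination in slightly more detail. The only minor remark is that the isotopy-invariance of $P$ is even more immediate than you suggest --- it follows directly from transitivity of the isotopy relation on curves, without needing to say anything about how ambient isotopies act on Seifert fibres.
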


\section{Discussion}\label{sec:discussion}

\subsection{Unanswered questions}\label{subsec:unanswered}

Although our targeted search allowed us to produce triangulations with no core edges for six different lens spaces,
this did not work for all of the lens spaces that we tried.
For reference, we failed to find counterexamples for the following lens spaces:
$S^2\times S^1$, $\mathbb{R}P^3$, $L_{3,1}$, $L_{5,1}$, $L_{7,2}$, $L_{8,3}$,
$L_{10,3}$, $L_{11,3}$, $L_{12,5}$, $L_{13,5}$, $L_{7,1}$, $L_{14,3}$, $L_{15,4}$,
$L_{16,7}$, $L_{17,5}$, $L_{18,5}$, $L_{19,7}$, $L_{21,8}$, $L_{8,1}$, and $L_{13,2}$.
We nevertheless suspect that such counterexamples exist.
Moreover, since we currently have no strong reason to believe otherwise,
we tentatively propose the following conjecture:

\begin{conjecture}\label{conj:infiniteNoCore}
Every lens space admits infinitely many one-vertex triangulations with no core edges.
\end{conjecture}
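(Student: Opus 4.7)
The plan is to exploit Proposition~\ref{prop:infinite} to reduce the conjecture to a finiteness question: for every lens space $L_{p,q}$, produce at least one one-vertex triangulation with no core edges. Once such a single triangulation is constructed for each $L_{p,q}$, the 0-2 move construction from Figure~\ref{fig:infiniteCounterex} immediately yields the desired infinite family, and Conjecture~\ref{conj:infiniteNoCore} follows.

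First, I would attempt a parametric construction that simultaneously handles a cofinite family of lens spaces. The idea is to start from one of the known counterexamples (for instance, the 20-tetrahedron triangulation of $S^{3}$ from Section~\ref{subsec:remCore}) and locate inside it a bounded sub-triangulation $\mathcal{P}$ with a single torus boundary that behaves like a ``plug'' --- that is, a layered solid torus, or more generally, a triangulated solid torus whose internal edges are demonstrably not core edges of the ambient manifold. If such a plug can be excised and replaced by other layered solid tori of varying slopes, then the resulting triangulations realise all possible lens spaces $L_{p,q}$, and the key task is to verify that (i) no internal edge of the new plug becomes a core edge, and (ii) no edge outside the plug becomes a core edge because the gluing slope has changed. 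Property (ii) is particularly delicate, and would likely need to be controlled using essential surfaces in the complement of the plug.

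Second, in case the parametric strategy is too rigid, I would fall back on strengthening the targeted search of Section~\ref{sec:heuristics}. Concrete refinements to try include: initialising the search from many more starting triangulations of each lens space; experimenting with richer complexity functions that incorporate additional structural data (for example, a secondary score measuring how ``close'' a given non-core edge is to being core, using invariants of the drilled manifold); allowing the search to temporarily increase the number of bad edges by taking $x\geqslant 1$, as was needed for the Seifert fibre spaces in Section~\ref{subsec:remFibre}; and running the search with substantially more computational resources to escape the troublesome regions described in Section~\ref{subsec:multi}.

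The main obstacle, I expect, will be the smallest lens spaces on the list of failures, particularly $\mathbb{R}P^{3}$, $L_{3,1}$, and $L_{5,1}$. These are the manifolds where the number of small one-vertex triangulations is itself limited, and where the heuristic search seemingly cannot find a counterexample despite repeated attempts. A successful parametric construction would have to produce triangulations with enough combinatorial ``room'' to avoid core edges even for these small target manifolds, which is likely to force the tetrahedron count to grow substantially; controlling this growth, and verifying the non-core property rigorously for the resulting infinite family, is the step I would expect to demand the most effort. Indeed, it is conceivable that one or two of these small lens spaces genuinely admit no counterexample, in which case the honest outcome of this attack would be a refined conjecture with a finite list of exceptions rather than a proof of Conjecture~\ref{conj:infiniteNoCore} as stated.
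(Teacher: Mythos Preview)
The statement you are addressing is not a theorem in the paper; it is explicitly labelled a \emph{conjecture} (Conjecture~\ref{conj:infiniteNoCore}), proposed ``tentatively'' precisely because the authors were \emph{unable} to find triangulations with no core edges for a long list of lens spaces (including $S^2\times S^1$, $\mathbb{R}P^3$, $L_{3,1}$, $L_{5,1}$, and many others). There is therefore no proof in the paper to compare your proposal against.

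Your write-up is accordingly not a proof but a research plan, and you are candid about this in your final paragraph. The reduction to Proposition~\ref{prop:infinite} is correct and is exactly what the paper uses for the lens spaces where a single counterexample \emph{was} found. However, both of your proposed attacks have genuine gaps as stated. For the parametric plug-replacement idea, step~(ii) is not merely ``delicate'': changing the filling slope changes the manifold, and whether an edge outside the plug is a core edge depends on the global topology, so there is no reason to expect this property to survive arbitrary re-fillings without a substantive new argument. For the computational fallback, you are proposing to strengthen a heuristic that the authors already pushed hard without success on these very manifolds; absent a new idea, this is hope rather than method. Your closing remark --- that the honest outcome might be a refined conjecture with exceptions --- is the accurate assessment: what you have written is a reasonable outline of how one might \emph{investigate} the conjecture, but it does not constitute a proof, and neither does anything in the paper.
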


For our other two problems of interest, our targeted search was again not always successful.
As mentioned in Section~\ref{subsec:remTunnel}, we were not able to find
a one-vertex ideal triangulation of the $5_2$ knot with no tunnel edges.
We also failed to find triangulations with no Seifert fibre edges for the following small Seifert fibre spaces:
$\mathcal{S}\left( \frac{1}{2}, \frac{1}{3}, -\frac{2}{3} \right)$,
$\mathcal{S}\left( \frac{1}{2}, \frac{1}{3}, -\frac{1}{3} \right)$,
$\mathcal{S}\left( \frac{1}{2}, \frac{1}{3}, -\frac{3}{4} \right)$,
$\mathcal{S}\left( \frac{1}{2}, \frac{1}{3}, -\frac{4}{5} \right)$, and
$\mathcal{S}\left( \frac{1}{2}, \frac{1}{3}, \frac{1}{3} \right)$.
Despite this, we propose the following conjectures:

\begin{conjecture}\label{conj:infiniteNoTunnel}
Every knot with tunnel number one admits infinitely many one-vertex ideal triangulations with no tunnel edges.
\end{conjecture}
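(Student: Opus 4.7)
The plan is to reduce the conjecture to an existence statement via Proposition~\ref{prop:infinite}. Specifically, since the property ``$e$ is a tunnel edge'' is invariant under ambient isotopy, it suffices to show that every tunnel-number-one knot $K$ admits \emph{at least one} ideal triangulation of its complement in which no edge is a tunnel edge; repeatedly applying the lune-style gadget from the proof of Proposition~\ref{prop:infinite} then produces infinitely many such triangulations.

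To establish the existence step, I would first try a constructive approach modelled on the experimental evidence from Section~\ref{subsec:remTunnel}. Take any starting one-vertex ideal triangulation $\mathcal{T}_K$ of the complement of $K$ (which always exists, e.g.\ via the triangular-pillow-with-tunnel construction). The idea is to show that one can always perform a carefully chosen sequence of 2-3 and 3-2 moves producing a triangulation in which every edge $e$ has the property that the complement of $K\cup\alpha$ (where $\alpha$ is the arc realised by $e$) has Heegaard genus strictly greater than $2$. One promising angle is to attempt to build an analogue of the 0-2 gadget in Figure~\ref{fig:infiniteCounterex}, but engineered so that the newly introduced edges $e'$ and $f'$ provably fail to be tunnel edges --- for instance, by arranging that $K\cup\alpha'$ contains a visible essential surface or an obvious obstruction to being a handlebody complement.

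A more systematic route would be to combine the Morimoto--Sakuma classification of tunnels for tunnel-number-one knots with the structure of one-vertex ideal triangulations. Since (modulo the unknotting tunnel being isotopic to a canonical tunnel) there are only finitely many isotopy classes of unknotting tunnels for each such knot, one could try to exhibit, for every candidate tunnel $\tau$, an explicit modification of $\mathcal{T}_K$ that destroys the property of any edge being isotopic to $\tau$, while not creating new tunnels. Iterating over the finite list of tunnels would then yield the required triangulation.

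The main obstacle is precisely what the paper highlights: the targeted search \emph{fails} for the $5_2$ knot and for several small Seifert fibre spaces in the analogous setting. Two-bridge knots like $5_2$ have a highly constrained tunnel structure, which makes it plausible that there is a genuine topological reason forcing some edge of every ideal triangulation to realise an unknotting tunnel. Consequently, before trying to prove the conjecture in full, I would first look for a potential obstruction in the $5_2$ case --- perhaps via small Seifert-like decompositions of the complement or via bounds on how ``twisted'' edges can be in low-complexity triangulations. If no such obstruction exists, the proof strategy above might succeed; if one does, the conjecture itself would need to be weakened (for instance, by excluding two-bridge knots or imposing a bridge-number lower bound).
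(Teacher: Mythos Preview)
The statement you are addressing is a \emph{conjecture}, not a theorem: the paper does not prove it, and in fact explicitly lists it among the ``unanswered questions'' in Section~\ref{subsec:unanswered}. So there is no proof in the paper to compare your proposal against.

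Your reduction step is correct and matches what the paper does have: by Proposition~\ref{prop:infinite}, it suffices to find \emph{one} ideal triangulation with no tunnel edges for each tunnel-number-one knot, and then the 0-2 gadget of Figure~\ref{fig:infiniteCounterex} yields infinitely many. The paper uses exactly this reduction for the knots where the search succeeded (Theorem~25).

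The gap is the existence step, and here your proposal is not a proof but a sketch of possible strategies, each with an acknowledged obstruction. The ``engineered 0-2 gadget'' idea cannot work as stated: the gadget in Proposition~\ref{prop:infinite} only introduces edges isotopic to \emph{existing} edges, so it can never create a triangulation with no tunnel edges from one that has a tunnel edge. Your Morimoto--Sakuma route is more promising in spirit, but ``destroy the property of any edge being isotopic to $\tau$ without creating new tunnels'' is precisely the hard part, and you give no mechanism for achieving it. You yourself note that the $5_2$ knot resists the computational search; the paper records this failure and proposes the conjecture \emph{despite} it, not because of any theoretical argument. In short, your proposal correctly identifies where the difficulty lies, but does not resolve it --- which is consistent with the paper's own position that this remains open.
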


\begin{conjecture}\label{conj:infiniteNoFibre}
For every irreducible small Seifert fibre space that is neither a lens space nor a prism manifold,
there exist infinitely many one-vertex triangulations with no Seifert fibre edges.
\end{conjecture}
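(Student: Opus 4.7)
The plan is to begin by invoking Proposition~\ref{prop:infinite}, which reduces the conjecture to a pure existence statement: it suffices to show that every small Seifert fibre space $\mathcal{M}$ (other than lens spaces and prism manifolds) admits at least one one-vertex triangulation with no edges isotopic to Seifert fibres. Since the uniqueness of the Seifert fibration in this setting (noted in section~\ref{subsec:sfs}) means that such an edge is detected by an intrinsic topological property, Proposition~\ref{prop:infinite} then produces infinitely many via repeated 0-2 moves.

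For the existence part, I would try a direct construction parameterised by the Seifert invariants of $\mathcal{M}=\mathcal{S}(\alpha_0/\beta_0,\alpha_1/\beta_1,\alpha_2/\beta_2)$. Starting from a ``canonical'' triangulation built from Construction~\ref{cons:sfs}---a fibred pair-of-pants times $S^1$, together with three fibred solid tori of multiplicities $\beta_0,\beta_1,\beta_2$---I would attempt to design a finite family of local gadgets that can be layered on so as to leave no edge lying along the fibration. The certification that no edge is isotopic to a Seifert fibre would use the obstructions underlying Algorithm~\ref{algm:seifert}: for each candidate edge $e$, the complement $\mathcal{D}$ of a small solid torus neighbourhood of $e$ must contain a specific system of vertical essential annuli and tori if $e$ is fibrous, and the gadget would be chosen to rule out such a system (for instance, by forcing $\mathcal{D}$ to have a non-fibred JSJ piece, or to have incompressible boundary but no suitable annular decomposition).

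The main obstacle, and the reason this is stated as a conjecture rather than a theorem, is uniformity in the Seifert invariants: the triples $(\alpha_i/\beta_i)$ range over an infinite parameter space, and the sizes of minimal triangulations grow with the $\beta_i$, so no single gadget can handle all cases. A natural remedy is induction on some complexity measure of the invariants (say $\beta_0+\beta_1+\beta_2$), where the inductive step would be a surgery-like modification that changes one $\beta_i$ while preserving the no-fibre-edge property. Setting this up rigorously seems hard because the property in question is non-local: modifying one region of the triangulation can make a previously non-fibrous edge become fibrous elsewhere, and the characterisation used by Algorithm~\ref{algm:seifert} is only one-sided.

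A softer alternative would be to argue asymptotically: fix $\mathcal{M}$ and show that sufficiently ``deep'' walks in the Pachner graph of $\mathcal{M}$ eventually reach a triangulation with no fibre edges. By Theorem~\ref{thm:pachnerConnected} the Pachner graph is connected, so it would suffice to bound the density (in some combinatorial sense) of triangulations possessing at least one fibre edge, and to show it is strictly less than one. I expect this path to be harder than the explicit construction, since controlling such densities in the Pachner graph is notoriously delicate, but it would have the advantage of sidestepping the parameter-uniformity issue entirely.
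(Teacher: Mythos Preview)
This statement is a \emph{conjecture} in the paper, not a theorem: the authors explicitly do not prove it. In section~\ref{subsec:unanswered} they list five specific small Seifert fibre spaces for which their targeted search \emph{failed} to produce even a single triangulation with no edges isotopic to Seifert fibres, and they propose Conjecture~\ref{conj:infiniteNoFibre} tentatively on the basis of the two successful examples in section~\ref{subsec:remFibre}. So there is no proof in the paper to compare against.

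Your proposal is therefore not a proof attempt that can be checked against the paper's argument, but rather a research programme for attacking an open problem. You are right that Proposition~\ref{prop:infinite} reduces the conjecture to exhibiting a single such triangulation for each $\mathcal{M}$, and you correctly identify the main obstacle: uniformity over the infinite family of Seifert invariants. But neither of your two suggested routes---a parameterised gadget construction with induction on $\sum\beta_i$, or a density argument in the Pachner graph---is carried far enough to constitute a proof, and you acknowledge this yourself. In particular, the ``non-locality'' issue you flag (modifying one region can make a distant edge fibrous) is exactly why the paper's authors were unable to resolve even the five concrete cases they list, let alone the general statement. The asymptotic Pachner-graph approach is even more speculative: no tools currently exist for controlling densities of combinatorial properties in Pachner graphs of a fixed $3$-manifold.

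In short: there is no gap to name because there is no claimed proof, only a sketch of directions. The honest status is that the conjecture remains open, and your proposal does not change that.
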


\subsection{Future applications}\label{subsec:future}

Elementary moves such as 2-3 and 3-2 moves have many computational applications beyond how we used them in this paper.
The two most common are:
\begin{enumerate}[label={(\arabic*)}]
\item using elementary moves to improve a triangulation,
which is often critical for making exponential-time computations feasible; and
\item finding a sequence of moves that transforms a triangulation $\mathcal{T}$ into another triangulation $\mathcal{T}'$,
which gives a computational proof that $\mathcal{T}$ and $\mathcal{T}'$ are homeomorphic.
\end{enumerate}
Finding the right sequence of moves can be difficult,
so a targeted search could also be useful in these other settings.

Indeed, one of the main difficulties in these other settings is that increasing the number of tetrahedra is sometimes unavoidable~\cite{Burton2011arXiv}:
\begin{enumerate}[label={(\arabic*)}]
\item There are many triangulations $\mathcal{T}$ of the $3$-sphere such that to
simplify $\mathcal{T}$ to a minimal triangulation via 2-3 and 3-2 moves, we must visit
at least one intermediate triangulation with two more tetrahedra than $\mathcal{T}$.
There is also a triangulation of a graph manifold for which such simplification requires \emph{three} additional tetrahedra.
\item In the census of minimal triangulations with up to $9$ tetrahedra,
there are many $3$-manifolds for which two additional tetrahedra are required to
find sequences of 2-3 and 3-2 moves that connect all pairs of minimal triangulations.
There is also one $3$-manifold---the lens space $L_{3,1}$---for which \emph{three}
additional tetrahedra are required to connect all the minimal triangulations.
\end{enumerate}

Beyond the cases where we are forced to increase the number of tetrahedra,
there are also situations where we might actually \emph{want} to increase this number,
provided this improves the triangulation with respect to another quantity.
One such quantity is the \emph{treewidth}, which is important because
a number of algorithms in $3$-manifold topology are known to be fixed-parameter tractable
in the treewidth~\cite{BurtonDowney2017,BLPS2016FPTMorse,BMS2018,BurtonPettersson2014FPT,BurtonSpreer2013}.
For such algorithms, the benefit of reducing the treewidth significantly outweighs
the cost that would arise from increasing the number of tetrahedra.
This has practical relevance because, as observed by Husz\'{a}r and Spreer~\cite[Corollary~20]{HuszarSpreer2019},
minimal triangulations (which minimise the number of tetrahedra) do not necessarily minimise the treewidth;
here, we mention two examples of this, the first of which was given by Husz\'{a}r and Spreer in~\cite{HuszarSpreer2019}:
\begin{itemize}
\item Let $\mathcal{M}$ denote the Poincar\'{e} homology sphere;
as mentioned in Section~\ref{subsec:remFibre}, $\mathcal{M}$ admits the Seifert fibration
$\mathcal{S}\left( \frac{1}{2}, \frac{1}{3}, -\frac{4}{5} \right)$.
The unique minimal triangulation for $\mathcal{M}$ is given by the isomorphism signature \texttt{fvPQcdecedekrsnrs};
this is a $5$-tetrahedron triangulation with treewidth $4$.
However, the isomorphism signature \texttt{hLALAkcbbeffggqqnnmxkk} gives
a $7$-tetrahedron triangulation of $\mathcal{M}$ with treewidth $2$.
\item In Section~\ref{sec:intro}, we mentioned that there is a Seifert fibre space---to be precise,
the fibration is given by $\mathcal{S}\left( \frac{1}{2}, \frac{1}{3}, -\frac{9}{11} \right)$---whose
unique minimal triangulation is given by the isomorphism signature \texttt{iLLLPQcbcgffghhhtsmhgosof};
this is an $8$-tetrahedron triangulation with treewidth $4$.
However, we can also triangulate this $3$-manifold using the standard prism-and-layering construction;
this gives the isomorphism signature
\texttt{jLAMLLQbcbdeghhiixxnxxjqisj}, which represents a $9$-tetrahedron triangulation with treewidth $2$.
\end{itemize}

The main takeaway from this discussion is that sometimes we might
either want or need to increase the number of tetrahedra.
As mentioned in Section~\ref{sec:intro}, increasing the number of tetrahedra is
accompanied by a super-exponential increase in the number of triangulations;
even if we fix a $3$-manifold, this growth is still at least exponential
(whether this is actually super-exponential remains open).
This means that exhaustive search techniques quickly run into problems with not only running time,
but also (often more importantly) memory management.
However, our work in this paper suggests that, with the right heuristics, we may be able to circumvent these problems;
the (still unresolved) challenge is to actually devise such heuristics in the settings mentioned above.

\bibliography{TriangCounterexRefs}

\appendix

\section{More triangulations with no core edges}\label{appen:remCoreMore}

In Section~\ref{subsec:remCore}, we mentioned that in addition to finding a triangulation with no core edges for the $3$-sphere,
we also found such triangulations for the following lens spaces: $L_{6,1}$, $L_{9,2}$, $L_{11,2}$, $L_{13,3}$, and $L_{16,3}$.
For these five additional cases, it was always enough to just run Algorithm~\ref{algm:targEnum} with $x=0$ and $12$ concurrent processes;
the results are as follows:
\begin{itemize}
\item For $L_{6,1}$, we began with the isomorphism signature \texttt{dLQbcbchhjs}.
Figure~\ref{fig:remCorL61} summarises the result of running Algorithm~\ref{algm:targEnum}.
We then used a breadth-first search to simplify down to a $12$-tetrahedron example with isomorphism signature
\[
\texttt{mLvwLLQQQadjljgilkilkknaqlabamaqacv}.
\]
	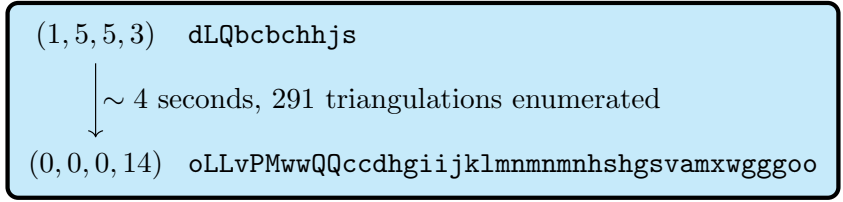
\begin{figure}[htbp]
	\centering
	\begin{tikzcd}[
		column sep=0ex,
		row sep=large,
		/tikz/column 2/.append style={anchor=base west},
		every label/.append style={font=\normalsize},
		resultsbox,]
	(1,5,5,3)
	\ar{d}{
	\text{$\sim4$ seconds, $291$ triangulations enumerated}}
	&
	\texttt{dLQbcbchhjs}
	\\
	(0,0,0,14)
	&
	\texttt{oLLvPMwwQQccdhgiijklmnmnmnhshgsvamxwgggoo}
	\end{tikzcd}
	\caption{Removing core edges from \texttt{dLQbcbchhjs} ($L_{6,1}$); $x=0$, $12$ processes.}
	\label{fig:remCorL61}
	\end{figure}
\item For $L_{9,2}$, we began with the isomorphism signature \texttt{dLQbcbchhjw}.
Figure~\ref{fig:remCorL92} summarises the result of running Algorithm~\ref{algm:targEnum} twice.
We then used a breadth-first search to simplify down to a $13$-tetrahedron example with isomorphism signature
\[
\texttt{nvLLMMAwQkcdghghjikmllmmnkpmigoriulkkn}.
\]
	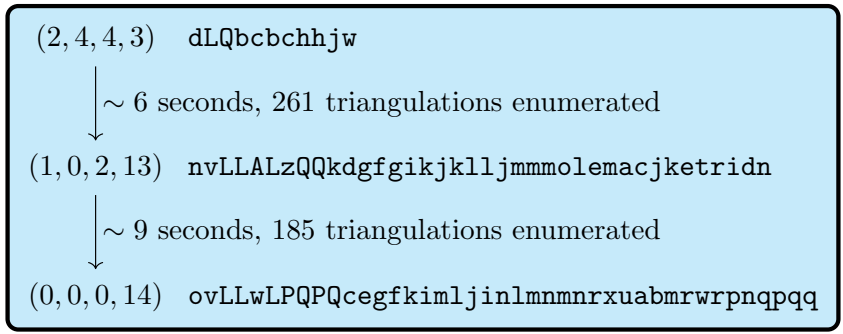
\begin{figure}[htbp]
	\centering
	\begin{tikzcd}[
		column sep=0ex,
		row sep=large,
		/tikz/column 2/.append style={anchor=base west},
		every label/.append style={font=\normalsize},
		resultsbox,]
	(2,4,4,3)
	\ar{d}{
	\text{$\sim6$ seconds, $261$ triangulations enumerated}}
	&
	\texttt{dLQbcbchhjw}
	\\
	(1,0,2,13)
	\ar{d}{
	\text{$\sim9$ seconds, $185$ triangulations enumerated}}
	&
	\texttt{nvLLALzQQkdgfgikjklljmmmolemacjketridn}
	\\
	(0,0,0,14)
	&
	\texttt{ovLLwLPQPQcegfkimljinlmnmnrxuabmrwrpnqpqq}
	\end{tikzcd}
	\caption{Removing core edges from \texttt{dLQbcbchhjw} ($L_{9,2}$); $x=0$, $12$ processes.}
	\label{fig:remCorL92}
	\end{figure}
\item For $L_{11,2}$, we began with the isomorphism signature \texttt{eLAkbcbddhhjhr}.
Figure~\ref{fig:remCorL11_2} summarises the result of running Algorithm~\ref{algm:targEnum} twice.
We then used a breadth-first search to simplify down to a $16$-tetrahedron example with isomorphism signature
\[
\texttt{qLvwLPLvQQQkadhgkiknopknopponjajcxacsvvocvvvvc}.
\]
	\begin{figure}[htbp]
	\centering
	\begin{tikzcd}[
		column sep=0ex,
		row sep=large,
		/tikz/column 2/.append style={anchor=base west},
		every label/.append style={font=\normalsize},
		resultsbox,]
	(2,5,6,2)
	\ar{d}{
	\text{$\sim19$ seconds, $790$ triangulations enumerated}}
	&
	\texttt{eLAkbcbddhhjhr}
	\\
	(1,1,3,16)
	\ar{d}{
	\text{$\sim11$ seconds, $192$ triangulations enumerated}}
	&
	\texttt{qvLLvLALQQQkdgihimkmkolnonpppoliggfsfjpodilmfg}
	\\
	(0,0,0,19)
	&
	\texttt{tLvAvMvwMwQQQkadfhikmlprsopqpsqrqsnaqaamvktmqpauamigwv}
	\end{tikzcd}
	\caption{Removing core edges from \texttt{eLAkbcbddhhjhr} ($L_{11,2}$); $x=0$, $12$ processes.}
	\label{fig:remCorL11_2}
	\end{figure}
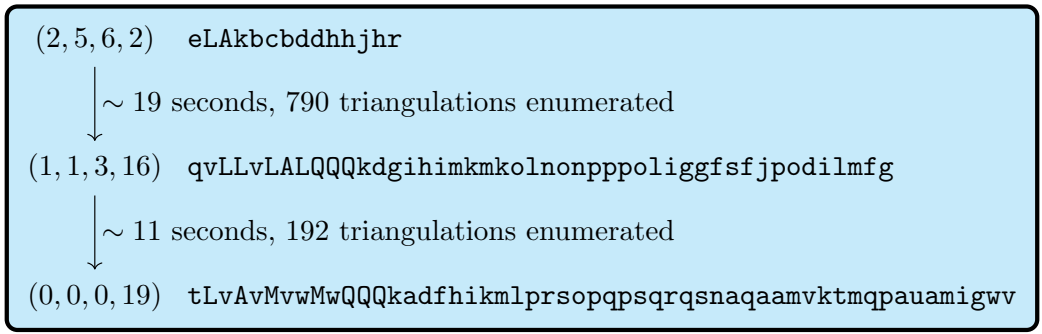
\item For $L_{13,3}$, we began with the isomorphism signature \texttt{eLAkbcbddhhjqn}.
Figure~\ref{fig:remCorL13_3} summarises the result of running Algorithm~\ref{algm:targEnum} twice.
We then used a breadth-first search to simplify down to a $17$-tetrahedron example with isomorphism signature
\[
\texttt{rvLLALwwLQQQccdgfkhkoqpqpononpqnkpisnsaaaanngngns}.
\]
	\begin{figure}[htbp]
	\centering
	\begin{tikzcd}[
		column sep=0ex,
		row sep=large,
		/tikz/column 2/.append style={anchor=base west},
		every label/.append style={font=\normalsize},
		resultsbox,]
	(2,4,5,4)
	\ar{d}{
	\text{$\sim44$ seconds, $1283$ triangulations enumerated}}
	&
	\texttt{eLAkbcbddhhjqn}
	\\
	(1,0,4,18)
	\ar{d}{
	\text{$\sim90$ seconds, $959$ triangulations enumerated}}
	&
	\texttt{svLALvMvLQQQQcefflhormrmpqnnpqrqnnaiaktaualakkvgoro}
	\\
	(0,0,0,19)
	&
	\texttt{tvLALvwLMMAQQkceffljopqqrlprosopssnnaiafaaddhtkrkakkut}
	\end{tikzcd}
	\caption{Removing core edges from \texttt{eLAkbcbddhhjqn} ($L_{13,3}$); $x=0$, $12$ processes.}
	\label{fig:remCorL13_3}
	\end{figure}
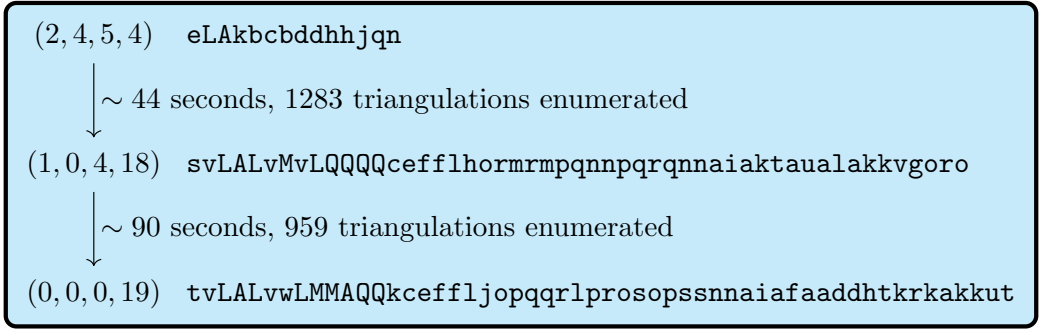
\item For $L_{16,3}$, we began with the isomorphism signature \texttt{fLAMcbcbdeehhjhxj}.
Figure~\ref{fig:remCorL16_3} summarises the result of running Algorithm~\ref{algm:targEnum} twice.
We then used a breadth-first search to simplify down to a $20$-tetrahedron example with isomorphism signature
\[
\texttt{uLvwvAvLzQPQQQcadigooqmoslrprtsprtstnaaoqovfctkfkfqlkfxta}.
\]
	\begin{figure}[htbp]
	\centering
	\begin{tikzcd}[
		column sep=0ex,
		row sep=large,
		/tikz/column 2/.append style={anchor=base west},
		every label/.append style={font=\normalsize},
		resultsbox,]
	(2,5,7,5)
	\ar{d}{
	\text{$\sim44$ seconds, $3506$ triangulations enumerated}}
	&
	\texttt{fLAMcbcbdeehhjhxj}
	\\
	(1,0,2,20)
	\ar{d}{
	\text{$\sim12$ seconds, $396$ triangulations enumerated}}
	&
	\texttt{uLvvwAwwzMQQAQccghkilpnqosnrrsqotsttpagcvaaahapggvcfbmgxm}
	\\
	(0,0,0,21)
	&
	\texttt{vLvvwLAzzPAMQQQcghimnqorlonosprutstuutabnaaadamdgffgfdjfjpm}
	\end{tikzcd}
	\caption{Removing core edges from \texttt{fLAMcbcbdeehhjhxj} ($L_{16,3}$); $x=0$, $12$ processes.}
	\label{fig:remCorL16_3}
	\end{figure}
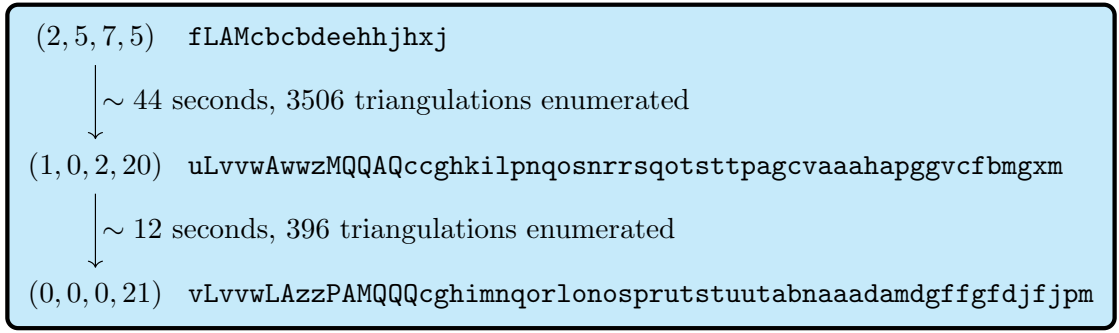
\end{itemize}

\section{More triangulations with no tunnel edges}\label{appen:remTunMore}

In Section~\ref{subsec:remTunnel}, we mentioned that we found triangulations with no tunnel edges for
all but one of the prime knots with crossing number up to $7$,
but did not give detailed results for the cases with crossing number $6$ and $7$.
We present these additional results here:

\begin{itemize}
\item For the $6_1$ knot, we began with the isomorphism signature \texttt{gLLMQccefeffdfeqldg}.
Figure~\ref{fig:remTun6_1} summarises the results of running Algorithm~\ref{algm:targEnum} four times.
	\begin{figure}[htbp]
	\centering
	\begin{tikzcd}[
		column sep=0ex,
		row sep=large,
		/tikz/column 2/.append style={anchor=base west},
		every label/.append style={font=\normalsize},
		resultsbox,]
	(4,2,4,6)
	\ar{d}{
	\text{$\sim43$ seconds, $105$ triangulations enumerated}}
	&
	\texttt{gLLMQccefeffdfeqldg}
	\\
	(3,3,5,7)
	\ar{d}{
	\text{$\sim144$ seconds, $377$ triangulations enumerated}}
	&
	\texttt{hLLAPkcdedfggghslfcffg}
	\\
	(2,2,4,9)
	\ar{d}{
	\text{$\sim260$ seconds, $475$ triangulations enumerated}}
	&
	\texttt{jLLLLQQbcghiighihxsmoorgogf}
	\\
	(1,0,2,13)
	\ar{d}{
	\text{$\sim182$ seconds, $282$ triangulations enumerated}}
	&
	\texttt{nvLALMvQQkcdfihjhkkllmmmnwaipckllhmljc}
	\\
	(0,0,0,13)
	&
	\texttt{nLvAzLAMQkbfeghijkmkmllmmppavkhhaxjorb}
	\end{tikzcd}
	\caption{Removing tunnel edges from \texttt{gLLMQccefeffdfeqldg} ($6_1$ knot); $x=0$, $12$ processes.}
	\label{fig:remTun6_1}
	\end{figure}
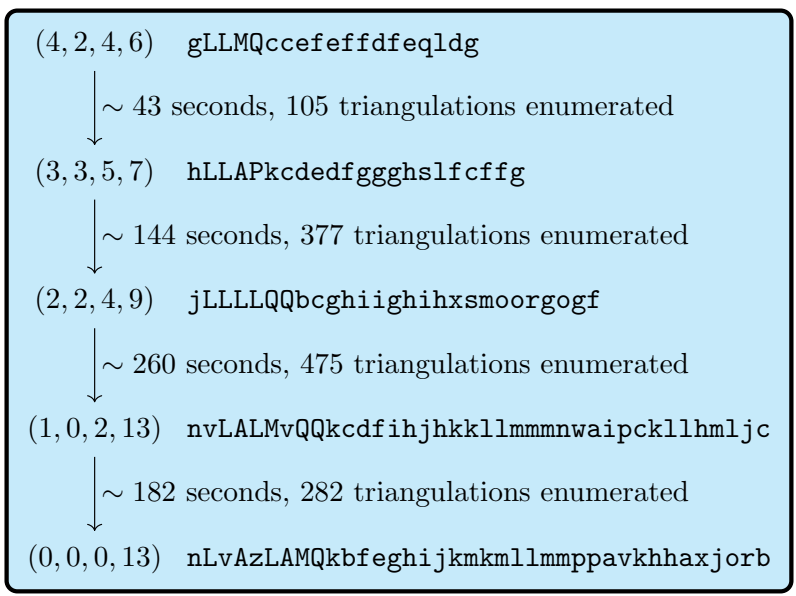
\item For the $6_2$ knot, we began with the isomorphism signature \texttt{fLLQcbeddeeedgfid}.
Figure~\ref{fig:remTun6_2} summarises the results of running Algorithm~\ref{algm:targEnum} four times.
	\begin{figure}[htbp]
	\centering
	\begin{tikzcd}[
		column sep=0ex,
		row sep=large,
		/tikz/column 2/.append style={anchor=base west},
		every label/.append style={font=\normalsize},
		resultsbox,]
	(4,2,4,5)
	\ar{d}{
	\text{$\sim14$ seconds, $74$ triangulations enumerated}}
	&
	\texttt{fLLQcbeddeeedgfid}
	\\
	(3,3,5,6)
	\ar{d}{
	\text{$\sim34$ seconds, $139$ triangulations enumerated}}
	&
	\texttt{gLLMQbcdefffdoweuoc}
	\\
	(2,3,6,6)
	\ar{d}{
	\text{$\sim127$ seconds, $406$ triangulations enumerated}}
	&
	\texttt{gLLMQbcdefffpnbhunw}
	\\
	(1,0,2,10)
	\ar{d}{
	\text{$\sim90$ seconds, $200$ triangulations enumerated}}
	&
	\texttt{kvLLAQPkegfhfihijjjfmpxeupoxld}
	\\
	(0,0,0,11)
	&
	\texttt{lLLvAMPQccefhigijjkkkuiaaofvaqems}
	\end{tikzcd}
	\caption{Removing tunnel edges from \texttt{fLLQcbeddeeedgfid} ($6_2$ knot); $x=0$, $12$ processes.}
	\label{fig:remTun6_2}
	\end{figure}
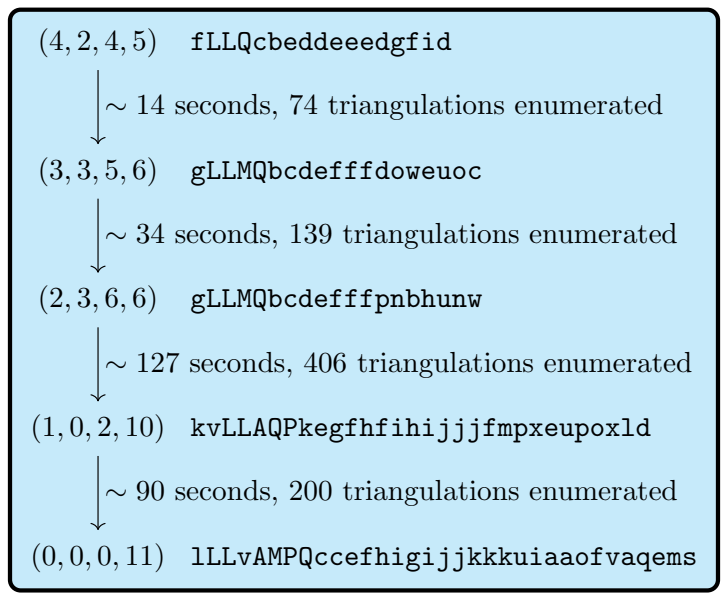
\item For the $6_3$ knot, we began with the isomorphism signature \texttt{eLPkbcddddcwjb}.
Figure~\ref{fig:remTun6_3} summarises the results of running Algorithm~\ref{algm:targEnum} three times.
	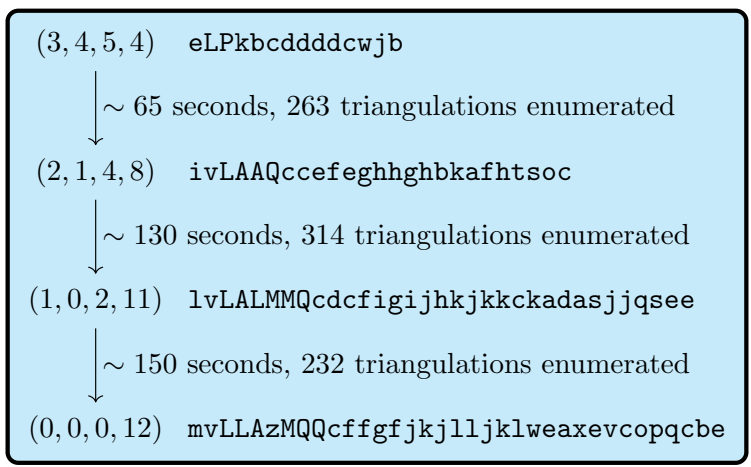
\begin{figure}[htbp]
	\centering
	\begin{tikzcd}[
		column sep=0ex,
		row sep=large,
		/tikz/column 2/.append style={anchor=base west},
		every label/.append style={font=\normalsize},
		resultsbox,]
	(3,4,5,4)
	\ar{d}{
	\text{$\sim65$ seconds, $263$ triangulations enumerated}}
	&
	\texttt{eLPkbcddddcwjb}
	\\
	(2,1,4,8)
	\ar{d}{
	\text{$\sim130$ seconds, $314$ triangulations enumerated}}
	&
	\texttt{ivLAAQccefeghhghbkafhtsoc}
	\\
	(1,0,2,11)
	\ar{d}{
	\text{$\sim150$ seconds, $232$ triangulations enumerated}}
	&
	\texttt{lvLALMMQcdcfigijhkjkkckadasjjqsee}
	\\
	(0,0,0,12)
	&
	\texttt{mvLLAzMQQcffgfjkjlljklweaxevcopqcbe}
	\end{tikzcd}
	\caption{Removing tunnel edges from \texttt{eLPkbcddddcwjb} ($6_3$ knot); $x=0$, $12$ processes.}
	\label{fig:remTun6_3}
	\end{figure}
\item For the $(7,2)$ torus knot (i.e., the $7_1$ knot), we began with the isomorphism signature
\[
\texttt{eLAkbccddaekln}.
\]
Figure~\ref{fig:remTunTorus72} summarises the results of running Algorithm~\ref{algm:targEnum} twice.
	\begin{figure}[htbp]
	\centering
	\begin{tikzcd}[
		column sep=0ex,
		row sep=large,
		/tikz/column 2/.append style={anchor=base west},
		every label/.append style={font=\normalsize},
		resultsbox,]
	(2,5,6,4)
	\ar{d}{
	\text{$\sim49$ seconds, $702$ triangulations enumerated}}
	&
	\texttt{eLAkbccddaekln}
	\\
	(1,2,5,9)
	\ar{d}{
	\text{$\sim77$ seconds, $309$ triangulations enumerated}}
	&
	\texttt{jLLzPAQcdegfhgiiitgaavoneel}
	\\
	(0,0,0,12)
	&
	\texttt{mLvvLMQQQbigijlhkjkjlldugswanfbffhi}
	\end{tikzcd}
	\caption{Removing tunnel edges from \texttt{eLAkbccddaekln} ($(7,2)$ torus knot); $x=0$, $12$ processes.}
	\label{fig:remTunTorus72}
	\end{figure}
\item For the $7_2$ knot, we began with the isomorphism signature \texttt{eLAkbccddmejln}.
Figure~\ref{fig:remTun7_2} summarises the results of running Algorithm~\ref{algm:targEnum} three times.
	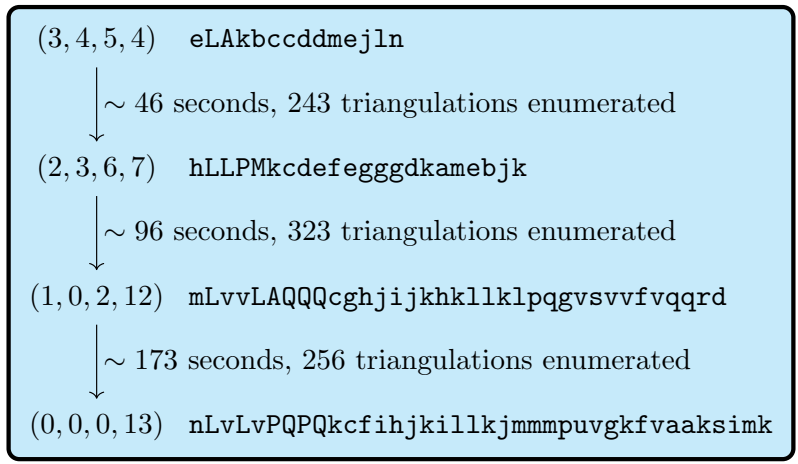
\begin{figure}[htbp]
	\centering
	\begin{tikzcd}[
		column sep=0ex,
		row sep=large,
		/tikz/column 2/.append style={anchor=base west},
		every label/.append style={font=\normalsize},
		resultsbox,]
	(3,4,5,4)
	\ar{d}{
	\text{$\sim46$ seconds, $243$ triangulations enumerated}}
	&
	\texttt{eLAkbccddmejln}
	\\
	(2,3,6,7)
	\ar{d}{
	\text{$\sim96$ seconds, $323$ triangulations enumerated}}
	&
	\texttt{hLLPMkcdefegggdkamebjk}
	\\
	(1,0,2,12)
	\ar{d}{
	\text{$\sim173$ seconds, $256$ triangulations enumerated}}
	&
	\texttt{mLvvLAQQQcghjijkhkllklpqgvsvvfvqqrd}
	\\
	(0,0,0,13)
	&
	\texttt{nLvLvPQPQkcfihjkillkjmmmpuvgkfvaaksimk}
	\end{tikzcd}
	\caption{Removing tunnel edges from \texttt{eLAkbccddmejln} ($7_2$ knot); $x=0$, $12$ processes.}
	\label{fig:remTun7_2}
	\end{figure}
\item For the $7_3$ knot, we began with the isomorphism signature \texttt{fLLQcbeddeeddgfua}.
Figure~\ref{fig:remTun7_3} summarises the results of running Algorithm~\ref{algm:targEnum} four times.
	\begin{figure}[htbp]
	\centering
	\begin{tikzcd}[
		column sep=0ex,
		row sep=large,
		/tikz/column 2/.append style={anchor=base west},
		every label/.append style={font=\normalsize},
		resultsbox,]
	(4,2,4,5)
	\ar{d}{
	\text{$\sim15$ seconds, $85$ triangulations enumerated}}
	&
	\texttt{fLLQcbeddeeddgfua}
	\\
	(3,3,6,7)
	\ar{d}{
	\text{$\sim32$ seconds, $84$ triangulations enumerated}}
	&
	\texttt{hLvMQkbefefgggpptxtmwr}
	\\
	(2,4,7,9)
	\ar{d}{
	\text{$\sim238$ seconds, $452$ triangulations enumerated}}
	&
	\texttt{jvLLMQQcdfgfihhiikjiatavvcj}
	\\
	(1,0,1,12)
	\ar{d}{
	\text{$\sim179$ seconds, $194$ triangulations enumerated}}
	&
	\texttt{mLLvAMLQQbeghgikkljlklxptavratieivs}
	\\
	(0,0,0,12)
	&
	\texttt{mLLLvMQMQceghhikijlklliixvvjiroaplj}
	\end{tikzcd}
	\caption{Removing tunnel edges from \texttt{fLLQcbeddeeddgfua} ($7_3$ knot); $x=0$, $12$ processes.}
	\label{fig:remTun7_3}
	\end{figure}
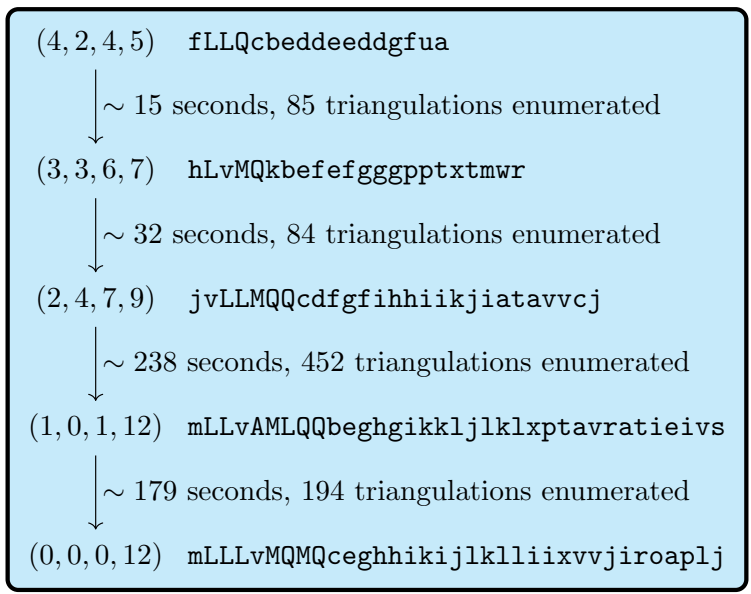
\item For the $7_4$ knot, we began with the isomorphism signature \texttt{gLLAQbdedfffdwmfarv},
which has complexity $(4,2,4,6)$.
To reduce the number of tunnel edges to $3$, we needed to run Algorithm~\ref{algm:targEnum} with
$24$ concurrent processes ($12$ processes was not enough).
After that, it was enough to run Algorithm~\ref{algm:targEnum} three times with $12$ concurrent processes.
The results are summarised in Figure~\ref{fig:remTun7_4}.
	\begin{figure}[htbp]
	\centering
	\begin{tikzcd}[
		column sep=0ex,
		row sep=huge,
		/tikz/column 2/.append style={anchor=base west},
		every label/.append style={font=\normalsize},
		resultsbox,]
	(4,2,4,6)
	\ar{d}{\begin{array}{l}
	\text{$x=0$, $24$ concurrent processes}\\
	\text{$\sim124$ seconds, $399$ triangulations enumerated}
	\end{array}}
	&
	\texttt{fLLQcbeddeeddgfua}
	\\
	(3,2,5,7)
	\ar{d}{\begin{array}{l}
	\text{$x=0$, $12$ concurrent processes}\\
	\text{$\sim45$ seconds, $140$ triangulations enumerated}
	\end{array}}
	&
	\texttt{hLvAQkbefgefggpphemxwn}
	\\
	(2,2,6,9)
	\ar{d}{\begin{array}{l}
	\text{$x=0$, $12$ concurrent processes}\\
	\text{$\sim118$ seconds, $260$ triangulations enumerated}
	\end{array}}
	&
	\texttt{jvLLMQQdfghgfhiiicaheatebbg}
	\\
	(1,0,5,11)
	\ar{d}{\begin{array}{l}
	\text{$x=0$, $12$ concurrent processes}\\
	\text{$\sim462$ seconds, $593$ triangulations enumerated}
	\end{array}}
	&
	\texttt{lvLPLPMQcefggihjkjikkfauuqafljnue}
	\\
	(0,0,0,14)
	&
	\texttt{oLvwLLQPAQcadhgklijmmlknnnnankqaaaisrktqm}
	\end{tikzcd}
	\caption{Removing tunnel edges from \texttt{gLLAQbdedfffdwmfarv} ($7_4$ knot).}
	\label{fig:remTun7_4}
	\end{figure}
\item For the $7_5$ knot, we began with the isomorphism signature \texttt{hLLAMkbedefgggtlftavkb}.
Figure~\ref{fig:remTun7_5} summarises the results of running Algorithm~\ref{algm:targEnum} five times.
	\begin{figure}[htbp]
	\centering
	\begin{tikzcd}[
		column sep=0ex,
		row sep=large,
		/tikz/column 2/.append style={anchor=base west},
		every label/.append style={font=\normalsize},
		resultsbox,]
	(5,4,5,7)
	\ar{d}{
	\text{$\sim3$ seconds, $3$ triangulations enumerated}}
	&
	\texttt{hLLAMkbedefgggtlftavkb}
	\\
	(4,2,4,7)
	\ar{d}{
	\text{$\sim35$ seconds, $82$ triangulations enumerated}}
	&
	\texttt{hLLLQkcdefegggioaainfr}
	\\
	(3,2,4,8)
	\ar{d}{
	\text{$\sim141$ seconds, $276$ triangulations enumerated}}
	&
	\texttt{iLLLAQcbcfgghfhhhohaimjma}
	\\
	(2,2,4,9)
	\ar{d}{
	\text{$\sim195$ seconds, $330$ triangulations enumerated}}
	&
	\texttt{jLLLLQQbcgiihhighmsmksknnrf}
	\\
	(1,0,1,12)
	\ar{d}{
	\text{$\sim167$ seconds, $228$ triangulations enumerated}}
	&
	\texttt{mLvvAMMQQbfijhjhkjlkllmahcvkopnqtqb}
	\\
	(0,0,0,12)
	&
	\texttt{mLvLLMAQQbefgikjlijlklmpmkvxfavvhur}
	\end{tikzcd}
	\caption{Removing tunnel edges from \texttt{hLLAMkbedefgggtlftavkb} ($7_5$ knot); $x=0$, $12$ processes.}
	\label{fig:remTun7_5}
	\end{figure}
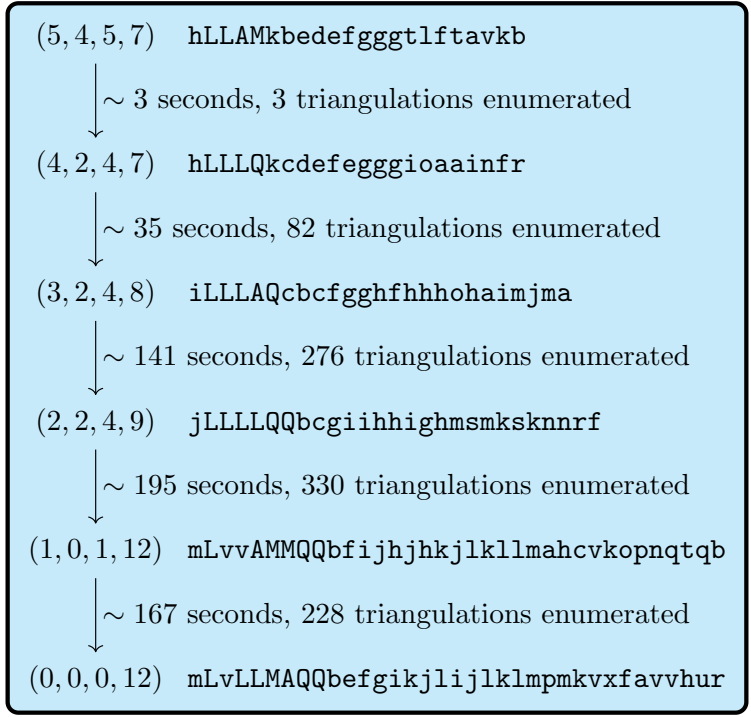
\item For the $7_6$ knot, we began with the isomorphism signature \texttt{iLLvQQccdeggfhhhdoldgrgnk}.
Figure~\ref{fig:remTun7_6} summarises the results of running Algorithm~\ref{algm:targEnum} four times.
	\begin{figure}[htbp]
	\centering
	\begin{tikzcd}[
		column sep=0ex,
		row sep=large,
		/tikz/column 2/.append style={anchor=base west},
		every label/.append style={font=\normalsize},
		resultsbox,]
	(4,2,5,8)
	\ar{d}{
	\text{$\sim212$ seconds, $384$ triangulations enumerated}}
	&
	\texttt{iLLvQQccdeggfhhhdoldgrgnk}
	\\
	(3,1,5,12)
	\ar{d}{
	\text{$\sim300$ seconds, $191$ triangulations enumerated}}
	&
	\texttt{mLLLPwMPQacffghjkljkllnsgsjaajawbpp}
	\\
	(2,1,3,12)
	\ar{d}{
	\text{$\sim440$ seconds, $325$ triangulations enumerated}}
	&
	\texttt{mLLPPvPMQaceffgikjjlllnsxgsacaxaccv}
	\\
	(1,0,1,13)
	\ar{d}{
	\text{$\sim384$ seconds, $215$ triangulations enumerated}}
	&
	\texttt{nLLPPLvMQkaceffghlkmlmmlnsxgsareebffbb}
	\\
	(0,0,0,13)
	&
	\texttt{nLLPPLvQPkaceffghkjlllmmnsxgsarxdafnmj}
	\end{tikzcd}
	\caption{Removing tunnel edges from \texttt{iLLvQQccdeggfhhhdoldgrgnk} ($7_6$ knot); $x=0$, $12$ processes.}
	\label{fig:remTun7_6}
	\end{figure}
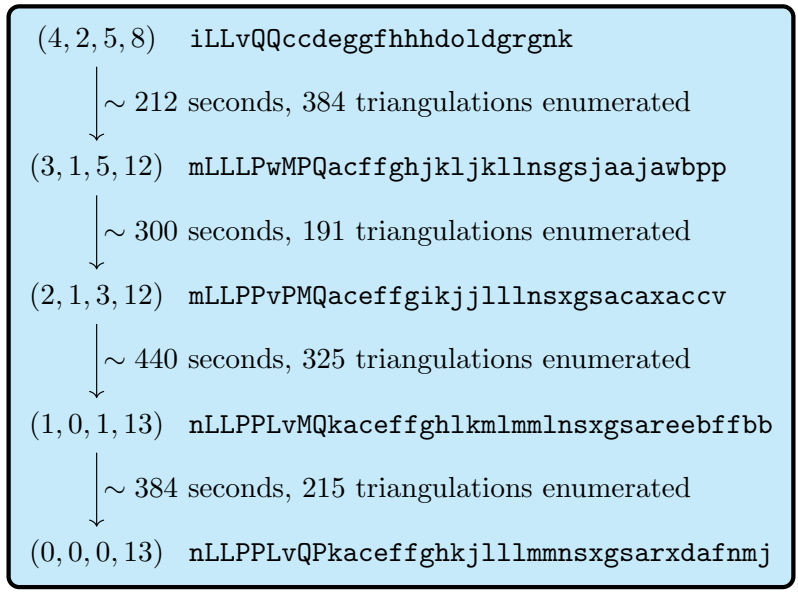
\item For the $7_7$ knot, we began with the isomorphism signature \texttt{iLLPzQcbcffghghhmomtdirhx}, which has complexity $(4,2,4,8)$.
To reduce the number of tunnel edges to $3$, we needed to run Algorithm~\ref{algm:targEnum} with $24$ concurrent processes ($12$ processes was not enough).
After that, it was enough to run Algorithm~\ref{algm:targEnum} three times with $12$ concurrent processes.
The results are summarised in Figure~\ref{fig:remTun7_7}.
	\begin{figure}[htbp]
	\centering
	\begin{tikzcd}[
		column sep=0ex,
		row sep=huge,
		/tikz/column 2/.append style={anchor=base west},
		every label/.append style={font=\normalsize},
		resultsbox,]
	(4,2,4,8)
	\ar{d}{\begin{array}{l}
	\text{$x=0$, $24$ concurrent processes}\\
	\text{$\sim239$ seconds, $313$ triangulations enumerated}
	\end{array}}
	&
	\texttt{iLLPzQcbcffghghhmomtdirhx}
	\\
	(3,2,6,10)
	\ar{d}{\begin{array}{l}
	\text{$x=0$, $12$ concurrent processes}\\
	\text{$\sim595$ seconds, $455$ triangulations enumerated}
	\end{array}}
	&
	\texttt{kLLvMMQkcdgighjijijioeckracdst}
	\\
	(2,0,4,13)
	\ar{d}{\begin{array}{l}
	\text{$x=0$, $12$ concurrent processes}\\
	\text{$\sim107$ seconds, $97$ triangulations enumerated}
	\end{array}}
	&
	\texttt{nvLLvAQQPkeghilikjkljmmmoqmakockfkchah}
	\\
	(1,2,6,12)
	\ar{d}{\begin{array}{l}
	\text{$x=0$, $12$ concurrent processes}\\
	\text{$\sim1078$ seconds, $694$ triangulations enumerated}
	\end{array}}
	&
	\texttt{mLLvzMAQQccgihkijjllkliviwvavxxaabf}
	\\
	(0,0,0,15)
	&
	\texttt{pLLLLvPMQPQccfgmhjknlmlomooivarlfaiaifnafix}
	\end{tikzcd}
	\caption{Removing tunnel edges from \texttt{iLLPzQcbcffghghhmomtdirhx} ($7_7$ knot).}
	\label{fig:remTun7_7}
	\end{figure}
\end{itemize}

\end{document}